\newtheorem{thm}{Theorem}[section]
\newtheorem{lem}[thm]{Lemma}
\newtheorem{prop}[thm]{Proposition}
\newtheorem{defn}[thm]{Definition}
\newtheorem{rem}[thm]{Remark}
\numberwithin{equation}{section}
\newenvironment{proof}{\noindent \emph{Proof.}}{\hspace{\stretch{1}}$\Box$}
\newcommand{\cL}{\mathcal{L}}
\newcommand{\cG}{\mathcal{G}}
\newcommand{\cV}{\mathcal{V}}
\newcommand{\cW}{\mathcal{W}}
\newcommand{\cH}{\mathcal{H}}
\newcommand{\cD}{\mathcal{D}}
\newcommand{\na}{\nabla}
\newcommand{\om}{\omega}
\newcommand{\bp}{\boldsymbol{p}}
\newcommand{\bc}{\boldsymbol{c}}
\newcommand{\cT}{\mathcal{T}}
\newcommand{\ce}{\mathcal{E}}
\newcommand{\cE}{\mathcal{E}}
\newcommand{\si}{\sigma}
\newcommand{\Rho} {\mathrm{P}}
\newcommand{\V} {\mathbb{V}}
\newcommand{\wh} [1] {\widehat{#1}}
\newcommand{\wt} [1] {\widetilde{#1}}
\newcommand{\mbf} [1]{\mathbf{#1}}
\newcommand{\mbb} [1]{\mathbb{#1}}
\newcommand{\mc} [1]{\mathcal{#1}}
\newcommand{\SL} {\mathrm{SL}}
\newcommand{\GL} {\mathrm{GL}}
\newcommand{\SO} {\mathrm{SO}}
\newcommand{\dd} {\mathrm{d}}
\newcommand{\ee} {\mathrm{e}}
\newcommand{\mr}[1] {\mathring{#1}}
\newcommand{\End} {\mathrm{End}}
\newcommand{\g} {\mathfrak{g}}
\newcommand{\T} {\mathbb{T}}
\newcommand{\R} {\mathbb{R}}
\newcommand{\X} {\mathbb{X}}
\newcommand{\lpl}{
  \mbox{$
  \begin{picture}(12.7,8)(-.5,-1)
  \put(2,0.2){$+$}
  \put(6.2,2.8){\oval(8,8)[l]}
  \end{picture}$}}
\def\sideremark#1{\ifvmode\leavevmode\fi\vadjust{\vbox to0pt{\vss
 \hbox to 0pt{\hskip\hsize\hskip1em
 \vbox{\hsize3cm\tiny\raggedright\pretolerance10000
 \noindent #1\hfill}\hss}\vbox to8pt{\vfil}\vss}}}
\newcommand{\nn}[1]{(\ref{#1})}
\newcommand{\Addresses}{{
  \bigskip
  \footnotesize

  A.~R.~G. \, , \textsc{Department of Mathematics, The University of Auckland,
    Private Bag 92019, Auckland 1142, New Zealand}\par\nopagebreak
  \textit{E-mail address},   A.~R.~G. \, : \texttt{r.gover@auckland.ac.nz}

  \medskip

  D.~S. \, , \textsc{Department of Mathematics, The University of Auckland,
    Private Bag 92019, Auckland 1142, New Zealand}\par\nopagebreak
  \textit{E-mail address},   D.~S. \, : \texttt{daniel.snell@auckland.ac.nz}

  \medskip

  A.~T.-C. \, , \textsc{Department of Mathematics, Faculty of Arts and Sciences, American University of Beirut, P.O. Box 11-0236, Riad El Solh, Beirut 1107 2020, Lebanon}\par\nopagebreak
  \textit{E-mail address},   D.~S. \, : \texttt{at68@aub.edu.lb}

}}
\begin{document}

\title{Distinguished curves and integrability in Riemannian, conformal, and projective geometry}
\author{A.~Rod Gover, Daniel Snell, and  Arman Taghavi-Chabert}
\date{}
\maketitle


\begin{abstract}
  We give a new characterisation of the unparametrised geodesics, or
  distinguished curves, for affine, pseudo-Riemannian, conformal, and
  projective geometry. This is a type of moving incidence
  relation. The characterisation is used to provide a very general
  theory and construction of quantities that are necessarily conserved
  along the curves. The formalism immediately yields explicit formulae
  for these curve first integrals. The usual role of Killing tensors
  and conformal Killing tensors is recovered as a special case, but
  the construction shows that a significantly larger class of equation
  solutions also yield curve first integrals. In particular any normal
  solution to an equation from the class of first BGG equations can
   yield such a conserved quantity. For some equations the
  condition of normality is not required.

  For nowhere-null curves in pseudo-Riemannian and conformal geometry
  additional results are available. We provide a fundamental tractor-valued invariant of such curves and this quantity is parallel if and
  only if the curve is an unparametrised conformal circle.

\end{abstract}

\vspace{1pt}
\noindent   {\small \emph{2010 Mathematics Subject Classification.} Primary: 53A30, 53B10, 53C22; Secondary: 53A20, 37K10, 53A60 }

\vspace{2pt}
\noindent  {\small \emph{Key words and phrases.} Differential geometry, geodesics, projective differential geometry, conformal geometry, conformal circles, conserved quantities, BGG operators, symmetries}


\section{Introduction}

In the context of Riemannian geometry, geodesics were first defined as
the curves that minimise the distance between sufficiently close
points. Such curves $\gamma:I\to M$ are governed by
the equation
\begin{equation}\label{g1}
  \nabla_{\dot \gamma}\dot \gamma =0 ,
\end{equation}
where $\dot\gamma$ denotes the curve velocity, $\nabla$ is the
Levi-Civita connection, and $I\subset \mathbb{R}$ is an interval. This
  {\em geodesic equation} evidently makes sense and determines
distinguished curves on any manifold equipped with an affine
connection, that we also denote $\nabla$. (For simplicity all affine
connections will be assumed torsion free.)  In particular this applies
to pseudo-Riemannian geometries $(M,g)$ of any signature with $\nabla$
taken to be the Levi-Civita connection. In any such case the resulting
distinguished parametrised curves satisfying \nn{g1} are called {\em
    geodesics}. These play an essential role in the geometry and
analysis of manifolds and related physics, especially in connection
with general relativity \cite{Andersson-Blue,Frolov2017,Guill,HU,Wald1984}.

It is well known that symmetries can help understand and determine
geodesics. For example on a pseudo-Riemannian manifold $(M,g)$ a
vector field $k$ is called a {\em Killing vector field} if $\cL_k
  g=0$, where $\cL_k$ denotes the Lie derivative along the flow of
$k$. For such an infinitesimal automorphism $k$ it follows easily that
along any geodesic $\gamma$, the function $g(k,\dot\gamma)$ is
constant. Thus $g(k,\dot\gamma)$ is called a {\em first integral} of
$\gamma$. Higher rank Killing tensors and Killing-Yano tensor fields
(see Section \ref{BGG-sect} below), which are sometimes called {\em
    hidden symmetries}, can also lead to first integrals and these have,
for example, played an important role in the study of the
Kerr, Kerr-NUT-(A)dS and Pleba\'{n}ski-Demia\'{n}ski metrics, and related issues including black hole stability
\cite{Andersson-Blue,Carter1968,Frolov2017,IntegrabilityKillingEqn,KillingConstantsMotion}.
One key point is that if
enough first integrals are available then given a point and a
direction one can completely determine the trace of the curve with
that data. (For a given curve $\gamma:I\to M$ by its {\em trace} we mean its image $\gamma(I)$ in the manifold.)  This is the case for the metrics just mentioned. At
an extreme of this theme there is considerable interest in so-called
{\em superintegrable} geometries where there are more than dim$(M)$
first integrals for any geodesic \cite{BEH,GHKW,KKM}.

In this article we produce the
first steps of a new general and uniform approach to producing such first
integrals. While the term ``hidden symmetry'' already suggests a
notion of symmetry that is not classically obvious we will explain in
Section \ref{BGG-sect} \ that the Killing, Killing tensor, and
Killing-Yano, equations are just a small part of a vast family of
similar (in a suitable sense) overdetermined PDEs that are known as
  {\em first BGG equations}. These PDEs are defined in Theorem
\ref{normp} following \cite{CD,CSS}.  We show that certain solutions
of any of these equations can yield first integrals, and the formalism
immediately yields explicit formulae for the conserved quantities.

Each geodesic first integral yields a
constraint on any geodesic trace, but not on its parametrisation. This
strongly suggests that as a first step we should describe
distinguished curves in a parametrisation independent way.
Treating this effectively is linked to  {\em projective differential
    geometry}. \ This is the geometry not of an affine manifold
$(M,\nabla)$,  but the weaker structure $(M,\bp)$, where
$\bp:=[\nabla]$ denotes an equivalence class of torsion-free affine connections
that share the same unparametrised geodesics.

There is no preferred connection on the tangent bundle of a projective
manifold $(M,\bp)$. However there is a canonical connection
$\nabla^\cT$ on a related bundle $\cT$ of rank just one greater \cite{BEG}. The
bundle $\cT$ is called the projective {\em tractor bundle} and
$\nabla^\cT$ is the {\em tractor connection}. The dual connection on
$\cT^*$ is also called the tractor connection and these yield in an
obvious way a tractor connection on the respective tensor powers of
these and the tensor products thereof. These are the basic objects of
the invariant calculus for projective geometry that we introduce in
Section \ref{proj-sect}. Throughout the article, a \emph{$k$-tractor}
will refer to a section of the $k^{\rm th}$ exterior power  $\Lambda^k \mc{T}$ of the
tractor bundle, and we use $\wedge$ to indicate the exterior product of sections of such bundles.

It is useful here to
note that the link between $\cT$ and the tangent bundle $TM$ is via a
canonical sequence
\begin{equation}\label{euler*}
  0\to \ce(-1) \stackrel{X}{\to}\cT\to TM(-1) \to 0
\end{equation}
where the density bundle $\ce(-1)$ is a suitable root of the square
of the top exterior power of $TM$, and $TM(-1)$ means $TM\otimes
  \ce(-1)$. The bundle map $X$, which can alternatively be thought of as a
section of $\cT(1)=\cT\otimes \ce(-1)^*$, is called the {\em canonical tractor}.
This plays an important role and, over a point $x\in M$, it invariantly encodes  information
concerning the position of that point relative
to other geometric data.

Throughout we will only consider curves with trace a connected smoothly embedded
1-manifold.
We can now
state one of the first main results.
\begin{thm}\label{main-p}
  On an affine or projective manifold an unparametrised oriented  curve
  $\gamma$ is an unparametrised oriented geodesic if and only if along  $\gamma$ there is a parallel projective $2$-tractor $0\neq \Sigma \in \Gamma(\Lambda^2 \cT|_\gamma)$ such that
  \begin{equation}\label{main-p-eqn}
    X\wedge \Sigma=0.
  \end{equation}
  For a given unparametrised oriented geodesic  $\gamma$ the $2$-tractor  $\Sigma_\gamma$ satisfying \nn{main-p-eqn} is unique up to multiplication by a positive constant.
\end{thm}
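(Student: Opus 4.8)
The plan is to work in a choice of connection $\nabla\in\bp$, which splits the exact sequence \nn{euler*} as $\cT|_\gamma \cong \ce(-1)\oplus TM(-1)$ and permits explicit computation. Fix an adapted frame $E_0,E_1,\dots,E_n$ along $\gamma$ with $E_0=X$ spanning the image of $\ce(-1)$ and $E_1,\dots,E_n$ spanning $TM(-1)$. In this frame the tractor connection of \cite{BEG} takes the standard form $\nabla_b E_0 = E_b$ and $\nabla_b E_a = -\mathsf{P}_{ba}E_0$, where $\mathsf{P}_{ab}$ is the projective Schouten tensor; in particular $\nabla_b X=E_b$, so the derivative of $X$ recovers the frame directions. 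The first structural observation is purely pointwise linear algebra: since $X$ is nowhere zero, $X\wedge\Sigma=0$ holds if and only if $\Sigma=X\wedge W$ for some tractor $W$, and then $\Sigma$ depends only on the $TM(-1)$-part $w^a$ of $W$; concretely $\Sigma = w^a\,(E_0\wedge E_a)$, so $\Sigma$ lies entirely in the mixed slot $\ce(-1)\otimes TM(-1)\subset\Lambda^2\cT$ and its leading $\Lambda^2 TM(-1)$-part vanishes.

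The heart of the argument is a single computation of $\nabla_u\Sigma$ along $\gamma$, where $u=\dot\gamma$. Using the connection formulas and the Leibniz rule (the $\mathsf{P}$-term drops out, since $E_0\wedge E_0=0$),
$$\nabla_u\Sigma = (\nabla_u w^a)\,(E_0\wedge E_a) \;+\; w^a u^b\,(E_b\wedge E_a),$$
the two terms lying respectively in the mixed slot $\ce(-1)\otimes TM(-1)$ and the leading slot $\Lambda^2 TM(-1)$. Hence $\Sigma$ is parallel along $\gamma$ if and only if both $\nabla_u w^a=0$ and $w^{[a}u^{b]}=0$ hold. For the forward direction I parametrise the geodesic affinely for the chosen $\nabla$, so $\nabla_u u=0$, and take $w^a=u^a$, i.e. $\Sigma=X\wedge\dot X$; then $\nabla_u w^a=\nabla_u u^a=0$ and the leading term $u^au^b(E_b\wedge E_a)$ vanishes by symmetry, so $\Sigma$ is parallel, is nonzero (its mixed part is $u\neq0$, so it is not proportional to $X$), and satisfies $X\wedge\Sigma=0$. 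For the converse, the leading condition $w^{[a}u^{b]}=0$ forces $w^a=fu^a$ for a nowhere-zero density $f$ (nonzero because $\Sigma\neq0$), and substituting into $\nabla_u w^a=0$ gives $f\,\nabla_u u^a=-(\nabla_u f)u^a$, that is $\nabla_u u\propto u$. This is exactly the unparametrised geodesic equation for $\bp$, so $\gamma$ is a geodesic.

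For uniqueness, the same analysis shows every admissible $\Sigma$ has the form $\Sigma=f\,(X\wedge\dot X)$ with $w^a=fu^a$ a $\nabla_u$-parallel section of the line $\R u\subset TM(-1)|_\gamma$. The ratio of any two such parallel sections is itself parallel, hence constant on the connected curve $\gamma$, so any two admissible $2$-tractors differ by a nonzero real constant. The orientation of $\gamma$ singles out a ray of velocities $u$ (positive reparametrisations), hence the ray of $\Sigma$ with $f$ a positive multiple of the oriented direction; on this ray the constant is positive, giving $\Sigma_\gamma$ up to a positive constant. Finally, because $\Sigma$, parallelism, and $X\wedge\Sigma$ are all tractorial, the characterisation is independent of the splitting used to compute it, so the result is genuinely projective.

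I expect the main obstacle to be bookkeeping rather than ideas: first, writing the induced tractor connection on $\Lambda^2\cT$ cleanly enough that the two slots of $\nabla_u\Sigma$ decouple into the collinearity and parallel-transport conditions; and second, upgrading the uniqueness from ``up to nonzero constant'' to ``up to positive constant'', which is precisely where the orientation hypothesis must enter to fix the admissible ray. Verifying that the argument is insensitive to the chosen $\nabla\in\bp$ and that all density weights match is routine but needs care.
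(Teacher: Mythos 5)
Your proof is correct and is essentially the paper's own argument: a choice of $\nabla\in\bp$ splits $\cT$, the incidence relation forces $\Sigma=X\wedge W$, and the two slots of $\nabla_u\Sigma=0$ decouple into the collinearity condition $w^{[a}u^{b]}=0$ and the geodesic equation, which is exactly the content of the identity \eqref{magic} and the paper's proof via Lemma \ref{lem-wt-proj-geod} and Proposition \ref{prop-proj-geod-par} (the paper obtains the collinearity by differentiating $X\wedge\Sigma=0$ once, you read it off the $\Lambda^2 TM$-slot of $\nabla_u\Sigma$; these are the same computation). The one piece of bookkeeping you defer but must actually do is the density weight: for $\Sigma$ to lie in the weight-zero bundle $\Lambda^2\cT$ the coefficient must be the weighted velocity $\mbf{u}^a\in\Gamma(T\gamma(-2))$, so that $\Sigma=2\,\mbf{u}^b X^{[\alpha}W^{\beta]}{}_b$ rather than $X\wedge\dot X$ (which has weight $2$), and it is the weighted equation $\mbf{u}^a\nabla_a\mbf{u}^b=0$ of Lemma \ref{lem-wt-proj-geod} that is projectively invariant and makes the construction independent of the chosen $\nabla\in\bp$.
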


There is also considerable interest in the conformal analogues of the
Killing equation and its generalisations
\cite{Bochner1948,Yano1952,Kashiwada1968,Tachibana1969,Semmelmann2003,G-Sil-ckforms,Mason2010,Dunajski2010}. Again first integrals provide one
motivation. On a pseudo-Riemannian manifold $(M,g)$, a vector field
$k$ is said to be a {\em conformal Killing vector} field if
$\mathcal{L}_kg=\rho g$ (for some function $\rho$). For such a vector
field $g(k,\dot \gamma)$ is a first integral for any parametrised {\em
    null geodesic} $\gamma$. Recall a curve $\gamma:I\to M$ is null if
its velocity $\dot\gamma \neq 0$ satisfies $g(\dot\gamma,\dot
  \gamma)=0$ everywhere along the curve. More generally similar first
integrals for geodesics that are null, in this way, arise from
conformal Killing tensors (see \eqref{cKt}), for example.

Treating the natural extension of these observations involves
conformal geometry.  A signature $(p,q)$ {\em conformal manifold}
consists of a pair $(M,\bc)$ where $\bc$ is an equivalence class of
signature $(p,q)$ metrics, where any two metrics $g,\hat{g} \in
  \bc$ are related by {\em conformal rescaling}, that is we have $\hat{g}=f g$ for some positive smooth function $f$. In
analogy with projective geometry, on conformal manifolds the basic
conformally invariant calculus is also based around an invariant
tractor bundle and connection, see Section \ref{csec} for details. To emphasise
similarities with the projective case, and also to simplify notation,
we denote these by essentially the same notation as in the projective
case. Because of context, no confusion should arise (and we do use a
different index set).  Thus $\cT$ will denote the standard conformal tractor
bundle and $\nabla^\cT$ the usual tractor connection on this, and $X$
denotes the (conformal) canonical tractor.

Upon conformal rescaling, null geodesics are simply
reparametrised. Thus, as unparam\-etrised curves, null geodesics are
among the distinguished curves of conformal manifolds $(M,\bc)$ of signature $(p,q)$ with
$pq\neq 0$.  These are characterised by a close analogue of Theorem \ref{main-p} as follows.
\begin{thm}\label{main-nullc}
  On a pseudo-Riemannian or conformal manifold, a curve $\gamma$ is an unparam\-etrised oriented
  null geodesic if and only if along $\gamma$ there is a
  parallel  conformal $2$-tractor $0\neq \Sigma \in \Gamma(\Lambda^2 \cT|_\gamma)$
  such that
  \begin{equation}\label{main-nullc-eqn}
    X\wedge \Sigma=0.
  \end{equation}
  For a given oriented null geodesic trace  $\gamma$ the $2$-tractor  $\Sigma_\gamma$ satisfying \nn{main-nullc-eqn} is totally null and unique up to multiplication by a positive constant.
\end{thm}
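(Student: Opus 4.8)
The plan is to mirror the strategy behind Theorem \ref{main-p}, working in a chosen scale $g\in\bc$ and exploiting the explicit slots of the canonical tractor and its iterated tractor derivatives along $\gamma$. Fix a parametrisation, write $u=\dot\gamma$ and $D:=\na^\cT_{u}$, and note at the outset that parallel transport is insensitive to reparametrisation (replacing $u$ by $fu$ only rescales $Du$), so that both the existence of a parallel $\Sigma$ and the relation $X\wedge\Sigma=0$ are genuinely properties of the unparametrised oriented curve; it therefore suffices to argue for one convenient parametrisation and scale. In the splitting determined by $g$ one has $X=(0,0,1)$, and the standard conformal tractor connection gives $U:=DX=(0,u^a,0)$ together with $D^2X=DU=\big(-g(u,u),\,(\na_u u)^a,\,-\Rho(u,u)\big)$, where $\Rho$ is the Schouten tensor. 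These three slot computations are the only analytic inputs the argument needs.

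For the reverse implication I would take $0\neq\Sigma$ parallel along $\gamma$ with $X\wedge\Sigma=0$. Since $X$ is nowhere zero, the pointwise relation $X\wedge\Sigma=0$ forces $\Sigma$ to be decomposable with $X$ a factor, so $\Sigma=X\wedge W$ for some tractor $W$ along $\gamma$; writing $\Pi:=\mathrm{span}(X,W)$, decomposability gives, for any tractor $V$, that $V\wedge\Sigma=0$ iff $V\in\Pi$. Differentiating the incidence relation $X\wedge\Sigma=0$ and using $D\Sigma=0$ yields $U\wedge\Sigma=0$, and differentiating once more yields $DU\wedge\Sigma=0$; hence both $U$ and $DU$ lie in $\Pi$. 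As $U=(0,u^a,0)$ is not proportional to $X$ for an immersed curve, $\Pi=\mathrm{span}(X,U)$, so $DU=\alpha X+\beta U=(0,\beta u^a,\alpha)$ for some functions $\alpha,\beta$. Comparing this slot by slot with the computed $DU$, the top slot gives $g(u,u)=0$ (so $\gamma$ is null), the middle slot gives $\na_u u=\beta u$ (so $\gamma$ is an unparametrised geodesic), and the bottom slot merely fixes $\alpha=-\Rho(u,u)$. This is the heart of the matter: it is precisely the top slot of $D^2X$, which has no counterpart in the projective tractor bundle, that delivers the nullity condition.

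For the forward implication I would start from an unparametrised oriented null geodesic, choose a representative $g\in\bc$ and an affine parametrisation, so $g(u,u)=0$ and $\na_u u=0$. Setting $\Sigma:=X\wedge U$, the relation $X\wedge\Sigma=X\wedge X\wedge U=0$ holds automatically, and $\Sigma\neq0$ since $X$ and $U$ are independent. Parallelism follows from $D\Sigma=DX\wedge U+X\wedge DU=U\wedge U+X\wedge DU=X\wedge DU$ together with $DU=(0,0,-\Rho(u,u))=-\Rho(u,u)\,X$ (by nullity and affine geodesy), whence $X\wedge DU=0$; for a non-affine parametrisation the same computation gives $D\Sigma=\lambda\Sigma$, and rescaling $\Sigma$ by a positive function along $\gamma$ restores parallelism while preserving $X\wedge\Sigma=0$.

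It remains to record the two supplementary claims. Total nullity is immediate: $\Sigma=X\wedge U$ spans $\mathrm{span}(X,U)$, on which the tractor metric $h$ (Section~\ref{csec}) vanishes identically, since $h(X,X)=0$, $h(X,U)=0$, and $h(U,U)=g(u,u)=0$, the last equality being exactly the nullity. For uniqueness, any two admissible $2$-tractors share the parallel null plane $\mathrm{span}(X,U)$ determined by the trace, so they differ by a function $f$ along $\gamma$; parallelism forces $Df=0$, hence $f$ is constant, and compatibility with the orientation of $\gamma$ (which orients $U$, and so $\mathrm{span}(X,U)$) forces $f>0$. The only step I expect to demand genuine care is getting the top slot of $D^2X$ correct, with its density-weight bookkeeping, since that slot is what produces $g(u,u)$ and hence the nullity and total-nullity statements; once it is in hand, the remainder is the linear algebra of decomposable $2$-forms together with the reparametrisation-covariance that lets everything descend to unparametrised curves.
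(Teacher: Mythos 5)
Your proof is correct and follows essentially the same route as the paper: the paper's key identity \eqref{cmagic}, which expresses $\mbf{u}^a\nabla_a\Sigma^{AB}$ as a sum of two linearly independent terms carrying the geodesic and the nullity conditions respectively, is exactly your slot-by-slot computation of $D^2X=DU$ written out in a scale, and both arguments identify $\Sigma$ with $X\wedge U$ by differentiating the incidence relation along the curve. The only cosmetic differences are that you differentiate the incidence relation a second time rather than invoking parallelism of $\Sigma$ via the magic formula directly, and that you work with an unweighted velocity in a fixed scale (handling reparametrisation by an explicit rescaling of $\Sigma$) rather than with the weighted velocity $\mbf{u}^a\in\Gamma(T\gamma[-2])$ of Lemma \ref{lem-wt-null-geod} and Proposition \ref{nullprop}.
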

\noindent The notion of {\em totally null} used in the Theorem is characterised by the nilpotency condition given in expression \eqref{nil-eq}.

Null geodesics are a very restricted class of distinguished
curves. In particular, they are unavailable in the case of definite
signature. On a conformal manifold the nowhere-null distinguished curves
are the so-called conformal circles \cite{Yano1938,Schouten1954,FriedSchmidt,Bailey1990a}.
The differential equation for these
is somewhat more complicated than the geodesic equation \nn{g1}.
Fixing a metric $g\in \bc$, a
parametrised curve $\gamma:I\to M$ is said to be a {\em
    conformal circle} if it satisfies the (conformally invariant)
equation
\begin{equation}
  \label{par-conf-circ}
  u^b \nabla_b a^c  - 3 \frac{u \cdot a}{u \cdot u} a^c + \frac{3 \, a \cdot a}{2 \, u \cdot u} u^c - ( u \cdot u ) u^b \Rho_b {}^c + 2  \, \Rho_{ab} u^a u^b u^c=0 \, ,
\end{equation}
where $u= \dot \gamma$ and $a=\ddot \gamma$ and $g(u,u)\neq0$.
Parametrised conformal circles may be understood in terms of tractors
\cite{BEG}, and this provides some conceptual simplification and an
equation that, although third order, is similar in spirit to
\nn{g1}. We review this in Section \ref{Conf-circ-sect}. See also \cite{unique-MikE,Tod,SilhanVojtech} for alternative useful characterisations of these curves.

It is natural to investigate the possibility of first integrals for
conformal circles. Any na\"{\i}ve approach needs to confront two new
problems. First that the governing equation is of third order, so first
integrals should be expected to involve higher order objects. Second
there is the related issue of parametrisation.  Whereas geodesics have
a distinguished class of affine parametrisations the class of
distinguished parametrisations determined by \nn{par-conf-circ}, the
so-called projective parametrisations (see Section \ref{param}), is
larger.  Thus comparing to geodesics there is potentially an even
greater gain from a parametrisation free description. We shall say
that an unparametrised curve $\gamma$ is an unparametrised conformal
circle if it admits a projective parametrisation so that the resulting
curve $\gamma:I\to M$ satisfies \nn{par-conf-circ}.  Then in terms of
the conformal tractor bundle we have the following result.
\begin{thm}\label{main-c}
  On a pseudo-Riemannian or conformal manifold a nowhere null curve
  $\gamma$ is an oriented conformal circle if and  only if along  $\gamma$ there is a parallel $3$-tractor $0\neq \Sigma \in \Gamma(\Lambda^3 \cT|_\gamma)$ such that
  \begin{align}\label{main-c-eqn}
    X\wedge \Sigma=0.
  \end{align}
  For a given oriented conformal circle $\gamma$ the $3$-tractor $\Sigma_\gamma$
  satisfying \eqref{main-c-eqn} is unique up to multiplication by a
  positive constant, and unique if we specify $|\Sigma_\gamma|^2=-1$ when $\gamma$ is spacelike, or $|\Sigma_\gamma|^2=1$ when $\gamma$ is timelike.
\end{thm}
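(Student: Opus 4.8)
The plan is to mirror the structure of the projective Theorem \ref{main-p} and the null-conformal Theorem \ref{main-nullc}, but now at the level of $3$-tractors, and to exploit the tractor description of parametrised conformal circles that the paper promises to review in Section \ref{Conf-circ-sect}. The governing equation \eqref{par-conf-circ} is third order, so I expect the curve to be encoded by three transverse tractors rather than two; accordingly the relevant object is a decomposable (or nearly decomposable) $3$-tractor. First I would fix a metric $g \in \bc$ and an adapted splitting of $\cT$ into its density, tangent, and density components, and write down the standard lift of the curve to the tractor bundle: the canonical tractor $X$ along $\gamma$, together with the tractors built from the velocity $u=\dot\gamma$ and acceleration $a=\ddot\gamma$ under a projective parametrisation. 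The expectation is that $\Sigma_\gamma$ should be essentially $X \wedge U \wedge A$, where $U$ and $A$ are the tractor prolongations of $u$ and $a$; the constraint \eqref{main-c-eqn}, namely $X \wedge \Sigma=0$, will then simply record that $X$ is one of the wedge factors, just as in the lower-rank cases.

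The forward direction is where the real content lies. Assuming $\gamma$ is an unparametrised conformal circle, I would take a projective parametrisation realising \eqref{par-conf-circ}, form $\Sigma := X \wedge U \wedge A$ (with $U$, $A$ the appropriate weighted tractor lifts), and then compute $\nabla^\cT_{\dot\gamma}\Sigma$ directly in the chosen splitting. The key step is to show this derivative vanishes precisely when \eqref{par-conf-circ} holds: differentiating the wedge distributes across the three factors via the tractor connection, and each $\nabla^\cT_{\dot\gamma}$ acting on $X$, $U$, $A$ produces lower-slot terms governed by the Schouten tensor $\Rho$ and the connection coefficients. I anticipate that the terms $u^b\nabla_b a^c$, the $u\cdot a$ and $a\cdot a$ corrections, and the two $\Rho$ contractions in \eqref{par-conf-circ} are exactly what must cancel against one another inside $\nabla^\cT_{\dot\gamma}(X\wedge U\wedge A)$. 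This is the main obstacle: organising the bookkeeping so that parallelism of $\Sigma$ is seen to be equivalent to the conformal-circle equation, and checking that the wedge with $X$ kills spurious terms so that only the genuine content of \eqref{par-conf-circ} survives. I would also verify that $X \wedge \Sigma = X \wedge X \wedge U \wedge A = 0$ automatically, since $X \wedge X = 0$.

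For the converse, suppose $\Sigma$ is a nonzero parallel $3$-tractor along $\gamma$ with $X \wedge \Sigma = 0$. The condition $X \wedge \Sigma = 0$ forces $X$ to divide $\Sigma$, so $\Sigma = X \wedge \Pi$ for some $2$-tractor $\Pi$ defined along the curve (up to adding multiples of $X$). Differentiating $\Sigma = X \wedge \Pi$ along $\gamma$ and using that $\nabla^\cT_{\dot\gamma} X$ lands in the $TM(-1)$ slot, I would argue that the tangential part of $\Pi$ must reproduce the velocity direction $U$, and that the remaining slot is then forced, by parallelism, to be the acceleration lift $A$ satisfying \eqref{par-conf-circ}. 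In other words, the parallel transport equation for $\Sigma$ unwinds slot by slot into first the condition that $\gamma$ is an integral curve of $u$, and then precisely the third-order equation for $a$; nondegeneracy (the nowhere-null hypothesis, i.e.\ $g(u,u)\neq 0$) guarantees the relevant tractor slots are genuinely transverse so that no collapse occurs.

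Finally, for the uniqueness and normalisation claim, I would note that any parallel $3$-tractor along $\gamma$ satisfying \eqref{main-c-eqn} is determined by its value at a single point, and the wedge structure $\Sigma = X\wedge U \wedge A$ fixes that value up to scale once the oriented trace is specified. The tractor metric induced on $\Lambda^3\cT$ gives a well-defined quantity $|\Sigma_\gamma|^2$; since it is preserved by parallel transport it is constant along $\gamma$, and its sign is governed by whether $g(u,u)$ is positive or negative, i.e.\ whether $\gamma$ is spacelike or timelike. Rescaling $\Sigma_\gamma$ by a positive constant to set $|\Sigma_\gamma|^2 = \mp 1$ in the respective cases then pins down $\Sigma_\gamma$ uniquely, with the orientation fixing the sign of the remaining scale. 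The one point requiring care is confirming that $|\Sigma_\gamma|^2$ is indeed nonzero (so the normalisation is achievable), which should follow from the transversality of $X$, $U$, $A$ together with the nowhere-null condition.
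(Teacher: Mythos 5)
Your proposal follows essentially the same route as the paper: the forward direction is the paper's Proposition \ref{prop-conf-circ-par} applied to $\Sigma = 6\sigma^{-1}X^{[A}U^{B}A^{C]}$, the converse likewise extracts $X$, $U$, $A$ as wedge factors by repeatedly differentiating the incidence relation, and the normalisation rests on the paper's Lemma \ref{llem} computing the signature of $\mathrm{span}(X,U,A)$ and $|\Sigma|^2=\mp1$. The only step you leave slightly loose is pinning down the scalar ambiguity in the converse: once $\Sigma = f\sigma^{-1}X\wedge U\wedge A$, the paper uses that $|\Sigma|^2$ is parallel together with Lemma \ref{llem} to force $f^2$ constant before invoking Proposition \ref{prop-conf-circ-par} --- an ingredient you already have in your uniqueness paragraph, so this is a matter of assembly rather than a missing idea.
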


Specialising to the case of the homogeneous model for projective
geometry and then also the homogeneous model for conformal geometry
the condition $X\wedge \Sigma=0$ agrees with an incidence relation: In
each of these settings the tractor field $\Sigma$ may be taken to be
parallel not just along the distinguished curve it determines but
rather parallel everywhere. Then also, in each of these homogeneous models, the
canonical tractor $X$ can be identified with suitable homogeneous
coordinates for the underlying point. See Sections \ref{model},
\ref{nmodel}, and \ref{circ-model} for details.

Using the above theorems there is a simple route to certain first integrals. For
example, in the setting of Theorem \ref{main-p} or \ref{main-nullc},
suppose one has a section $\psi$ of $\otimes^s(\Lambda^2\cT^*)$ that is
parallel for the relevant tractor connection.  Then $\psi$ pairs
with $\otimes^s\Sigma$ to yield a geodesic first integral. There is a
similar observation for conformal circles that uses Theorem
\ref{main-c}. Parallel tractor fields correspond to certain solutions,
called normal solutions, of invariant overdetermined PDEs called first
BGG equations. See Theorem \ref{normp} in Section \ref{BGGsec}. Then
the universal construction of corresponding first integrals is treated
by Theorem \ref{fi-thm}; this is one of the main theorems here and applies simultaneously to the three
settings of the Theorems \ref{main-p}, \ref{main-nullc}, and \ref{main-c} above.
Despite its technical nature, this machinery produces explicit formulae for these first integrals, which may be verified to be invariant and conserved along the given distinguished curve.

Theorem \ref{fi-thm} is then illustrated by various examples in
sections \ref{class}, \ref{case-BGG2}, \ref{nullfi}, \ref{ob-ex}, and
\ref{S-ex}. Several of these examples may be considered special cases
of a general procedure for producing first intgerals that, for the
case of conformal circles, is outlined in Section \ref{gen_proc}. See
in particular Theorem \ref{genid} which shows that it is easy to use
Theorem \ref{fi-thm} proliferate non-trivial conformal circle first
integrals. Then the use of this Theorem is further illustrated in
Section \ref{genex}.   For the case of geodesics and null geodesics one
expects, by classical theory, the first integrals in all cases as
constructed to be linked to Killing tensors. This arises naturally in
the constructions here and the explicit link is described in
Proposition \ref{BGGtoK-prop} and Proposition
\ref{BGGtoK-cprop}. These explain how normal BGG solutions yield
normal Killing tensors and, respectively, normal conformal Killing
tensors.

Surprisingly the examples treated also lead to results that are,
in each case, stronger than that given by the general Theorem
\ref{fi-thm}, as follows.  Each of the examples treated exhibits first
integrals for geodesics, null geodesics, or conformal circles as
arising from various first BGG equations. The general theory of
Theorem \ref{fi-thm} requires that the solution be normal, in that it
corresponds to parallel tractor according to Theorem \ref{normp}. But
actually, for each of the BGG equations and first integrals treated
explicitly, the normality turns out to be {\em not required}. See
Remark \ref{non-normal}, Theorem \ref{3rd-thm}, Theorem
\ref{basic-ex-thm}, and Theorem \ref{S-thm}. This suggests the
interesting possibility that there may be a strengthening of Theorem \ref{fi-thm} in some generality.

Finally in this context we should mention that because the treatment
of the curves is parametrisation independent, and so also are the
first integrals constructed, the results apply to infinity on
appropriately compactifiable complete non-compact manifolds. For
example the projective treatment provides first integrals that extend
to the boundary at infinity of manifolds that are projectively compact
in the sense of \cite{CG-proj-Ein,CG-proj-confbound,CGH-jlms}. The conformal treatment
yields curves and first integrals that extend to the infinity of
conformally compact manifolds. This should be useful for extending the theories of superintegrability and separation of variables to such settings.

Some history is relevant here. Examples of conformal circle first
integrals were constructed and applied for specific classes of metrics
(and in some generality in dimension 4) by Tod in \cite{Tod}. Indeed
in this context the example of Section \ref{ob-ex} arises. We thank
Maciej Dunajski for pointing this out  and note that more
  recently Dunajski and Tod have applied the same first integral to
  find classes of conformal circles and even establish the complete
  integrability of the conformal circle equation on certain classes of
  4-manifold \cite{DT-pre}. First integrals for
parametrised conformal circles were looked at in the thesis works of
the second author \cite{Snell-honours,SnellMSc} using the tractor
approach from \cite{BEG}. A slightly different and parametrisation
free approach was developed by Bell \cite{BellMSc} and his work has
certainly influenced our development. He also gives another
characterisation of conformal circles in terms of a symmetric
2-tractor (that arises from our machinery in Theorem \ref{S-thm}
below). Recently \v Silhan and \v{Z}\'{a}dn\'{i}k \cite{SilhanVojtech}
have developed an interesting tractor Frenet theory for curves, and
associated with this recovered some first integrals in the same spirit
as those looked at by Bell and Snell.

There are additional results in the work here. In the case of nowhere-null curves
in conformal geometry we can canonically associate the $3$-tractor $\Sigma$, even if the curve is not distinguished. See Lemma
\ref{tri-lem}. Thus $\Sigma$ is a fundamental invariant of such curves
and so may be used to construct, in obvious ways, other invariants of
such curves. Indeed $\Sigma$ provides the full information of the curve.
Then finally in Section \ref{zero-sec} we show that
for normal first BGG equation solutions the zero locus of a suitable part of
the solution jet describes a distinguished curve. See Proposition \ref{cc-zero},
Proposition \ref{nullzero}, and Proposition \ref{f-zero}.

\smallskip

Section \ref{back}, and then Section \ref{proj-sect} up to Section
\ref{ptsec} present background material on affine and projective
geometry, including the tractor calculus. Similar background for
conformal geometry is found in Section \ref{c-sect} and Section
\ref{csec}. The ordinary differential equations describing the
parametrisation independent treatment of geodesics, null geodesics,
and then conformal circles are found in Section \ref{projeqsec},
Section \ref{null-sec}, and Section \ref{ccirc} respectively.
Theorem \ref{main-p}, Theorem \ref{main-nullc} and Theorem
\ref{main-c} are proved in Section \ref{projeqsec}, Section
\ref{null-sec}, and Section \ref{trac-form}, respectively.

\smallskip

Conformal and projective geometries are special cases of the large
class of parabolic geometries \cite{CS-book} and for these structures
there is a general theory of distinguished curves
\cite{DistCurvParabolic}. It seems likely that for all such distinguished
curves there will be a tractor based incidence type characterisation
of these curves that generalises the developments on this
article. This should also lead to generalisation of the results found here on first integrals.
This direction and other extensions of the work here will be treated elsewhere.

\smallskip

Throughout manifolds and tensors on manifolds will be assumed smooth.
When it is convenient, we will use standard abstract index notation (in
the sense of Penrose). For example we may write $\ce^a$ (respectively
$\ce_a$) for the tangent bundle $TM$ (respectively cotangent bundle
$T^*M$) of a manifold $M$ and $\xi^a$ (respectively $\omega_a$) for a
vector field (respectively a $1$-form field) on $M$. Then we write
$\xi^a\omega_a$ for the canonical pairing between vector fields and
$1$-forms and denote by the Kronecker delta $\delta^b{}_a$ the
identity section of the bundle $\textrm{End}(TM)$ of endomorphisms of
$TM$. Indices enclosed by round (respectively by square brackets)
indicate symmetrisation (respectively skew-symmetrisation) over the
enclosed indices. When tractor bundles are introduced these will also
be adorned with abstract indices when convenient.

\paragraph{Acknowledgments}
A. R. G. gratefully acknowledges support from the Royal
Society of New Zealand via Marsden Grant 16-UOA-051. A. T.-C. declares that this work was partially supported by the grant 346300 for IMPAN from the Simons Foundation and the matching 2015-2019 Polish MNiSW fund.  A. T.-C. was also supported by a long-term faculty development grant from
the American University of Beirut for his visit to IMPAN, Warsaw, in the summer 2018.

\section{Background and notation for affine  geometry}\label{back}

Let $(M,\nabla)$ be an  affine manifold (of dimension $n\geq
  2$), meaning that $\nabla$ is a torsion-free affine connection.
The curvature
$$R_{ab}{}^c{}_d\in\Gamma(\Lambda^2 T^*M \otimes TM\otimes T^*M )$$
of the
connection $\nabla$ is given by
$$
  [\nabla_a,\nabla_b]v^c=R_{ab}{}^c{}_d  v^d , \qquad v\in \Gamma(TM).
$$
The Ricci curvature is defined by $R_{bd} =R_{cb}{}^c{}_d $.

\subsection{Decomposition of curvature: projective} \label{projdec}

On an affine manifold the
trace-free part $W_{ab}{}^c{}_d$ of the curvature $R_{ab}{}^c{}_d$ is
called the {\em projective Weyl curvature} and we have
\begin{equation}\label{decp}
  R_{ab}{}^c{}_d= W_{ab}{}^c{}_d +2 \delta^c{}_{[a}\Rho_{b]d}+\beta_{ab}\delta^c_d,
\end{equation}
where $\beta_{ab}$ is skew and $\Rho_{ab}$ is called the {\em projective Schouten tensor}. That $W_{ab}{}^c{}_d$ is trace-free means exactly that $W_{ab}{}^a{}_d=0$ and $W_{ab}{}^d{}_d=0$.
Since $\nabla$ is torsion-free the Bianchi symmetry
$R_{[ab}{}^c{}_{d]}=0$ holds, whence
$$
  \beta_{ab}=-2\Rho_{[ab]} \qquad \mbox{and} \qquad (n-1)\Rho_{ab} = R_{ab}+\beta_{ab}.
$$

From the differential Bianchi identity we obtain that $\beta$ is closed and
\begin{equation}
  \nabla_c W_{ab}{}^c{}_{d}=(n-2)C_{abd},
\end{equation}
where
\begin{equation}\label{Cotton}
  C_{abc}:= \nabla_a \Rho_{bc}-\nabla_b \Rho_{ac}
\end{equation}
is called the {\em
    projective Cotton tensor}. In dimension 2 the projective Weyl tensor is identically zero.

As we shall see below the curvature decomposition \nn{decp} is useful
in projective differential geometry.

\section{Geodesics and projective geometry}\label{proj-sect}

As mentioned above for Riemannian, pseudo-Riemannian, or more
generally affine geometry, the treatment of unparametrised geodesics
involves projective differential geometry.

Some further notation is in order first. On a smooth $n$-manifold $M$ the bundle
$\mathcal{K}:=(\Lambda^{n} TM)^2$ is an oriented line bundle and thus
we can take correspondingly oriented roots of this. For projective
geometry a convenient notation for these is as follows: given $w\in
  \mathbb{R}$ we write
\begin{equation} \label{pdensities}
  \ce(w):=\mathcal{K}^{\frac{w}{2n+2}} .
\end{equation}
\newcommand{\cK}{\mathcal{K}}

\subsection{Projective geometry}

Two affine connections $\nabla$ and $\widehat{\nabla}$ on a manifold are
said to be {\em projectively equivalent} if they have the same
geodesics as unparameterised curves. Any two connections that differ
only by torsion are projectively equivalent, and thus in the study of
projective differential geometry it is usual to work with torsion-free
connections. Two such torsion-free connections $\nabla$ and $\widehat{\nabla}$
are projectively equivalent if and only if there exists a $1$-form $\Upsilon\in\Gamma(T^*M)$ such that
\begin{equation}\label{projective_change}
  \widehat{\nabla}_a\xi^b=\nabla_a\xi^b+\Upsilon_a\xi^b+\delta^b{}_a\Upsilon_c\xi^c.
\end{equation}

\begin{defn}
  A manifold $M$ of dimension
  $n\geq 2$ equipped with is an equivalence class $\bp$ of projectively equivalent torsion-free affine connections is called a \underline{projective manifold}.
\end{defn}

The standard homogeneous model for oriented projective manifolds is
the $(n+1)$-dimen\-sional sphere arising as the ray projectivisation
$S^{n}:=\mathbb{P}_+(\mathbb{R}^{n+1})$ of $\mathbb{R}^{n+1}$
(i.e.\ the double cover of $\mathbb{RP}^{n}$). The
group of orientation-preserving projective diffeomorphisms of
$S^{n}$ can be identified with the special linear group
$\SL(n+1,\mathbb{R})$ acting transitively on
$\mathbb{P}_+(\mathbb{R}^{n+1})$ in the standard way.

Any affine connection $\nabla$ induces a connection on the bundle
$\cK$ and hence a connection on the bundles $\ce(w)$ of projective
densities. For two projectively equivalent affine connections
$\nabla$ and $\widehat{\nabla}$ related as in \eqref{projective_change} their
induced connections on the bundles $\cE( w )$
are related by
\begin{equation}\label{projective_change_on_densities}
  \widehat{\nabla}_a\sigma=\nabla_a\sigma+ w \Upsilon_a \sigma \, .
\end{equation}

\subsection{The projective tractor bundle and connection}\label{ptsec}

As mentioned in the introduction, on a general projective $n$-manifold
$(M,\bp)$ there is no distinguished connection on $TM$. However there
is a projectively invariant connection on a related rank $(n+1)$
bundle $\cT$. This is the projective tractor connection (of \cite{ThomasProj}) that we now
describe following \cite{BEG,CGMacbeth-Ein}.

Consider the first jet prolongation
$J^1\ce(1)\to M$ of the density bundle $\ce(1)$. (See for example
\cite{palais} for a general development of jet bundles.)
There is a canonical bundle map called the {\em jet projection map}
$J^1\ce(1)\to\ce(1)$, which at each point is determined by the map
from 1-jets of densities to simply their evaluation at that point, and
this map has kernel $T^*M (1)$.  We write $\cT^*$, or in an
abstract index notation $\ce_\alpha$, for $J^1\ce(1)$ and $\cT$ or $\ce^\alpha$
for the dual vector bundle. Then we can view the jet projection as a
canonical section $X^\alpha$ of the bundle $\ce^\alpha(1)$. Likewise,
the inclusion of the kernel of this projection can be viewed as a
canonical bundle map $\ce_a(1)\to\ce_\alpha$, which we denote by
$Z_\alpha{}^a$. Thus the jet exact sequence (at 1-jets) is written in this
notation as
\begin{equation}\label{euler}
  0\to \ce_a(1)\stackrel{Z_\alpha{}^a}{\to} \ce_\alpha \stackrel{X^\alpha}{\to}\ce(1)\to 0.
\end{equation}
We write $\ce_\alpha=\ce(1)\lpl \ce_a(1)$ to summarise the composition
structure in \nn{euler} and $X^\alpha\in \Gamma(\ce^{\alpha}(1))$, as defined in
\nn{euler}, is called the {\em canonical tractor}. The sequence \eqref{euler*} is the dual to \eqref{euler}.

As mentioned above,
any connection $\nabla \in \bp$
determines a connection on $\ce(1)$ (and vice versa). On the other hand,  by definition,
a connection on $\ce(1)$  is precisely a splitting
of the 1-jet sequence \nn{euler}.
Thus given such a choice we have the direct sum
decomposition $\ce_\alpha \stackrel{\nabla}{=} \ce(1)\oplus \ce_a(1) $
and we write
\begin{equation}\label{split}
  Y_\alpha:\ce(1) \to \ce_\alpha \qquad \mbox{and} \qquad W^\alpha{}_a: \ce_\alpha\to \ce_a(1),
\end{equation}
for the bundle maps giving this splitting of \nn{euler}; so
$$
  X^\alpha Y_\alpha=1, \qquad  Z_\alpha{}^b W^\alpha{}_a=\delta^b {}_a, \qquad \mbox{and} \qquad Y_\alpha W^\alpha{}_a=0.
$$

\newcommand{\bX}{\mathbb{X}}

Observe that any bundle map $ \ce_\alpha\to \ce_a(1)$ that splits the
sequence \eqref{euler} must differ from $W^\alpha{}_a$ by a section
that takes the form $X^\alpha\otimes \Upsilon_b$ for some 1-form
$\Upsilon_b$. Thus from \eqref{euler} we deduce that there is a projectively invariant injective
bundle   map from $TM(-2)$ into $\Lambda^2\cT$:
\begin{equation}\label{injp}
  \bX^{\alpha \beta}_b: \ce^b(-2)\to \ce^{[\alpha\beta]} \quad\mbox{given by} \quad \mbf{v}^b\mapsto 2X^{[\alpha} W^{\beta ]} {}_b \mbf{v}^b.
\end{equation}

With
respect to a splitting \nn{split}
we define a connection on $\cT^*$ by
\begin{equation}\label{pconn}
  \nabla^{\mathcal{T}^*}_a \binom{\si}{\mu_b}
  := \binom{ \nabla_a \si -\mu_a}{\nabla_a \mu_b + \Rho_{ab} \si}.
\end{equation}
Here $\Rho_{ab}$ is the projective Schouten tensor of $\nabla\in \bp$, as introduced earlier.
It turns out that \nn{pconn} is
independent of the choice $\nabla \in \bp$, and so the  {\em cotractor connection}
$\nabla^{\mathcal{T}^*}$ is determined canonically by the projective
structure $\bp$.
Thus we shall
also term $\cT^*=\ce_\alpha$ the {\em cotractor bundle}, and we note the dual
  {\em tractor bundle} $\cT=\ce^\alpha$
has canonically the dual {\em tractor connection}: in terms of a
splitting dual to that above this is given by
\begin{equation}\label{tconn}
  \nabla^\cT_a \left( \begin{array}{c} \nu^b \\
      \rho
    \end{array}\right) =
  \left( \begin{array}{c} \nabla_a\nu^b + \rho \delta^b {}_a \\
      \nabla_a \rho - \Rho_{ab}\nu^b
    \end{array}\right).
\end{equation}
Note that given a choice of $\nabla\in \bp$, by coupling with the
tractor connection we can differentiate tensors taking values in
tractor bundles and also weighted tractors. In particular
we have
\begin{equation}\label{trids}
  \nabla_aX^\beta=W^\beta{}_a,  \quad \nabla_a W^\beta{}_b=-\Rho_{ab} X^\beta, \quad  \nabla_a Y_\beta =\Rho_{ab}Z_\beta{}^b,  \quad\mbox{and}\quad  \nabla_a Z_\beta {}^b = -\delta^b {}_a Y_\beta.
\end{equation}
These encode the tractor connection and the formula for connection acting  on a section of any
tractor bundle can then be deduced from the Leibniz rule.

Finally, the \emph{tractor curvature} $\Omega_{ab}{}^{\gamma}{}_{\delta}
$ of the tractor connection, defined by $\Omega_{ab}{}^{\gamma}{}_{\delta} \Phi^{\delta} := 2 \, \nabla_{[a} \nabla_{b]}
  \Phi^{\gamma}$, for any $\Phi^\gamma \in \Gamma( \mc{T} )$, can be expressed in a splitting as
\begin{align}\label{ptract-curv}
  \Omega_{ab}{}^{\gamma}{}_{\delta} & = W_{ab}{}^c{}_d W^\gamma {}_c Z_\delta {}^d - C_{abc} Z_\delta {}^c X^\gamma
\end{align}
A projective structure is said to be \emph{(locally) flat} if this
tractor curvature vanishes, in which case the manifold is locally diffeomorphic to $S^n$.

\subsection{The parametrisation independent treatment of geodesics}\label{parsec}

Throughout this article, a {\em curve} on a smooth manifold $M$ will
mean a connected one-dimensional embedded submanifold $\gamma$ of $M$, where
this submanifold is identified with its image under the
embedding. (While curves may be defined in a way that defines more
general objects, locally our definition imposes no restriction.)  A
  {\em velocity (vector) field} $u^a$ along $\gamma$ is a nowhere
vanishing section of the tangent bundle $T \gamma$ of $\gamma$.  A
corresponding parametrisation of the curve $\gamma$ is a choice of
real-valued smooth function on $\gamma$ that satisfies $u^a \nabla_a t
  = 1$.
Given the velocity field $u^a$ the parameter $t$ may exist only locally. But locally  it is determined
up to the addition of  a constant and is equivalent to
a map $I\to \gamma$, where $I$ is a real interval, with $u^a$ the push-forward of $\frac{\dd}{\dd t}$. By a slight abuse of terminology we shall call the data $(\gamma,u^a)$  a
  {\em parametrised curve}.

An \emph{oriented} curve is a curve together with a choice of orientation, that is, a nowhere-vanishing $1$-form on $\gamma$. This is clearly equivalent to a choice of velocity field along $\gamma$. In particular, a choice of parametrisation endows a curve with an orientation. If the orientation of a curve is defined by its velocity $u^a$, then the velocity field $-u^a$ defines the opposite orientation.

Note that vector and tractor bundles on $M$ can be pulled back, i.e.\ restricted, to $\gamma$. This will be reflected in the notation in this article where restriction of a bundle $\mathcal{B}$ to $\gamma$ will be denoted $\mathcal{B}|_\gamma$. In particular, the tangent bundle $T \gamma$ is a subbundle of the restriction $T M|_\gamma$ of the tangent bundle $T M$ to $\gamma$. We shall also write $T \gamma (w)$ for $T\gamma \otimes \mc{E}(w)|_\gamma$ for any weight $w \in \R$.

\subsection{Distinguished curves in projective geometry}\label{projeqsec}

Let $(M,\nabla )$ be an affine manifold.
A parametrised geodesic for $(M,\nabla )$ is a parametrised curve with
velocity $u^a$ satisfying
\begin{align}\label{eq-aff-par-geod}
  u^b \nabla_b u^a & =0 \, .
\end{align}
As a temporary simplification, let us suppose that $\nabla$ is \emph{special},  i.e.\  $\nabla$ preserves a chosen volume density.
If we change to a projectively related special affine connection
$\widehat{\nabla}$  then we have (\ref{projective_change}) with
$\Upsilon_a = \si^{-1}\nabla_a \si$ for some $\si \in \ce_+(1)$, where $\ce_+(1)$ denotes the ray subbundle of $\ce(1)$ consisting of strictly positive densities. We will call such a $\si$ a \emph{scale}.
Thus
$$
  u^b \widehat{\nabla}_b u^a  = 2u^a\Upsilon_bu^b,
$$ and so in this sense the geodesic equation is not projectively
invariant. However there is a commensurate reparametrisation of
$\gamma$, equivalently a rescaling of the velocity $u^a$: if we
reparametrise $\gamma$ so that with this new parametrisation it has
velocity $\widehat{u}^a = \si^{-2}u^a$  then
$$
  \widehat{u}^b \widehat{\nabla}_b \widehat{u}^a =0.
$$ This suggests a projectively invariant geodesic equation that we
now describe in the general setting.

Before we proceed, we introduce some terminology. Given a projective
manifold $(M,\bp)$, we say that a curve $\gamma$ is an {\em
    unparametrised geodesic} in $(M,\bp)$ if, given $\nabla\in \bp$,
$\gamma$ admits a velocity field $u$ satisfying
(\ref{eq-aff-par-geod}). For a given unparametrised geodesic it is clear that
the velocity field depends on the choice of $\nabla\in \bp$ and it is
easily verified that the property of being an unparametrised geodesic is
independent of parametrisation.

\begin{lem}\label{lem-wt-proj-geod}
  Let $\gamma$ be an oriented curve on $(M,\bp)$. Then $\gamma$ is an unparametrised
  geodesic, with respect to $\bp$, if and only if there exists
  a non-vanishing vector field $\mbf{u}^a\in \Gamma(T\gamma (-2))$ along
  $\gamma$ satisfying the projectively invariant equation
  \begin{align}\label{eq-proj-inv-geod}
    \mbf{u}^b \nabla_b \mbf{u}^a & = 0 \, , & \mbox{where} & \,  \nabla\in \bp.
  \end{align}
  The weighted velocity  field $\mbf{u}$ is unique up to a positive factor that is constant along $\gamma$.
\end{lem}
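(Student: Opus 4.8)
The plan is to reduce the whole statement to the transformation behaviour of the scalar-density--valued expression $\mathbf{u}^b \nabla_b \mathbf{u}^a$ under (i) a change of connection within $\bp$ and (ii) a trivialisation of the density factor by a scale. First I would settle projective invariance of \eqref{eq-proj-inv-geod}, which simultaneously justifies the clause ``$\nabla \in \bp$'' in the statement. Let $\mathbf{u}^a$ be a section of $T\gamma(w)$ and let $\widehat\nabla$ be related to $\nabla$ by \eqref{projective_change}, with its induced action on densities given by \eqref{projective_change_on_densities}. Applying the Leibniz rule for the tensor-product connection on $TM \otimes \mathcal{E}(w)$ gives
\[
  \widehat\nabla_b \mathbf{u}^a = \nabla_b \mathbf{u}^a + (1+w)\,\Upsilon_b \mathbf{u}^a + \delta^a{}_b\,\Upsilon_c \mathbf{u}^c,
\]
so that contracting with $\mathbf{u}^b$ yields
\[
  \mathbf{u}^b \widehat\nabla_b \mathbf{u}^a = \mathbf{u}^b \nabla_b \mathbf{u}^a + (2+w)\,(\Upsilon_c \mathbf{u}^c)\,\mathbf{u}^a.
\]
The inhomogeneous term vanishes identically precisely when $w=-2$, which is exactly the weight in the statement; hence for $\mathbf{u}^a \in \Gamma(T\gamma(-2))$ the condition $\mathbf{u}^b \nabla_b \mathbf{u}^a = 0$ is independent of the choice $\nabla \in \bp$.

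Next I would connect \eqref{eq-proj-inv-geod} to the unparametrised geodesic condition by fixing a scale $\sigma \in \mathcal{E}_+(1)$ and using the associated special connection $\nabla \in \bp$, characterised by $\nabla\sigma = 0$. Since $\sigma^{-2}$ trivialises $\mathcal{E}(-2)|_\gamma$, any nowhere-vanishing $\mathbf{u}^a \in \Gamma(T\gamma(-2))$ can be written uniquely as $\mathbf{u}^a = \sigma^{-2} u^a$ with $u^a$ a nowhere-vanishing velocity field in $\Gamma(T\gamma)$. Because $\nabla$ annihilates $\sigma^{-2}$, the Leibniz rule gives $\nabla_b \mathbf{u}^a = \sigma^{-2}\nabla_b u^a$, and therefore
\[
  \mathbf{u}^b \nabla_b \mathbf{u}^a = \sigma^{-4}\, u^b \nabla_b u^a.
\]
Thus $\mathbf{u}$ solves \eqref{eq-proj-inv-geod} if and only if $u$ solves the affine geodesic equation \eqref{eq-aff-par-geod}. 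This delivers both implications at once: if $\gamma$ is an unparametrised geodesic then for this $\nabla$ it carries an affinely parametrised velocity $u^a$, and $\mathbf{u}^a := \sigma^{-2} u^a$ is the required weighted solution (and for every $\nabla \in \bp$, by the invariance established above); conversely a weighted solution $\mathbf{u}^a$ produces, via $u^a = \sigma^2 \mathbf{u}^a$, a velocity field satisfying \eqref{eq-aff-par-geod}, so $\gamma$ is an unparametrised geodesic.

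For uniqueness I would compare two nowhere-vanishing solutions $\mathbf{u}^a, \widetilde{\mathbf{u}}^a \in \Gamma(T\gamma(-2))$. As $T\gamma(-2)$ is a line bundle, $\widetilde{\mathbf{u}}^a = f\,\mathbf{u}^a$ for a nowhere-vanishing function $f$ on $\gamma$, and a short Leibniz computation using $\mathbf{u}^b \nabla_b \mathbf{u}^a = 0$ gives $\widetilde{\mathbf{u}}^b \nabla_b \widetilde{\mathbf{u}}^a = f\,(\mathbf{u}^b \nabla_b f)\,\mathbf{u}^a$. Since both sides of the equation must vanish and $f, \mathbf{u}^a$ are nonzero, this forces $\mathbf{u}^b \nabla_b f = 0$, i.e.\ $f$ is constant along the connected curve $\gamma$; requiring $\mathbf{u}^a$ and $\widetilde{\mathbf{u}}^a$ to induce the given orientation of $\gamma$ then forces $f>0$, giving uniqueness up to a positive constant.

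The entire lemma hinges on the weight bookkeeping of the first step: the only genuine point of care is to track the two distinct contributions of $\Upsilon$ (one from the tangent index, one from the density factor) so as to land on $w=-2$ as the unique invariant weight. Once the scale $\sigma$ is introduced the remaining steps are routine, so I expect no serious obstacle beyond that computation.
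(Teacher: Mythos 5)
Your proof is correct and rests on the same central device as the paper's: absorbing the reparametrisation freedom into a projective density of weight $-2$ via $\mbf{u}^a=\sigma^{-2}u^a$. The one genuine difference is in how the density is produced. The paper starts from an arbitrary $\nabla\in\bp$ and an arbitrary oriented velocity $u^a$, obtains $u^b\nabla_b u^a=fu^a$, and then solves the ODE $2\,\sigma^{-1}u^a\nabla_a\sigma=f$ locally along $\gamma$ to manufacture the right $\sigma$; you instead fix a global scale $\sigma\in\Gamma(\ce_+(1))$ first and pass to the special connection in $\bp$ annihilating it, so that the weighted equation and the affine geodesic equation correspond exactly with no correction term and no ODE to solve. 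Your route is cleaner and global in one stroke, at the cost of invoking the scale--connection correspondence (each scale determines a unique $\nabla\in\bp$ with $\nabla\sigma=0$), which the paper only sets up informally in the paragraph preceding the lemma; the paper's route is slightly more self-contained since it works with any $\nabla\in\bp$ directly. Your explicit weight computation isolating $w=-2$ is also a worthwhile addition, as the paper merely asserts the invariance, and your uniqueness argument matches what the paper leaves as ``obvious.''
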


\begin{proof}
  It follows at once from (\ref{projective_change}) and (\ref{projective_change_on_densities}) that (\ref{eq-proj-inv-geod}) is projectively invariant.

  Suppose $\gamma$ is an unparametrised geodesic for $(M,\bp)$, and
  $u^a$ is a smooth non-vanishing vector field tangent to $\gamma$ that
  is consistent with the orientation.  Let $\nabla\in \bp$. From
  \eqref{projective_change} it follows that $u^b \nabla_b u^a = f u^a$
  for some smooth  function $f$ along $\gamma$. Working locally let
  $\sigma \in \Gamma( \mc{E}_+(1)|_\gamma )$ solve $2 \, \sigma^{-1} u^a
    \nabla_a \sigma = f$.  Then $\mbf{u}^a := \sigma^{-2} u^a \in
    \Gamma( \mc{E}^a(-2)|_\gamma )$ satisfies \eqref{eq-proj-inv-geod}. One can
  easily check that the resulting $\mbf{u}^a$ is independent of the
  initial choice of oriented parametrisation.  Conversely, given any
  solution $\mbf{u}^a$ of \eqref{eq-proj-inv-geod} and any density
  $\sigma \in \Gamma( \mc{E}_+(1)|_\gamma )$ along $\gamma$ one obtains a
  vector field $u^a := \sigma^2 \mbf{u}^a$ tangent to $\gamma$ that
  satisfies an equation $u^b \nabla_b u^a = f u^a$ where $f$ is
  smooth. Thus, locally (and hence globally), there is a reparametrisation with
  velocity $\widehat{u}^a$ satisfying $\widehat{u}^b \nabla_b
    \widehat{u}^a = 0$. The final statement is obvious.
\end{proof}

Note that given an unparametrised geodesic $\gamma$, with corresponding weighted
velocity $\mbf{u}$, a projective scale $\si\in \Gamma( \ce_+(1) )$ determines a
unique section $u^a = \sigma^2 \mbf{u}^a$ of $T \gamma$, and thus a
parametrisation of $\gamma$ up to an additive constant. Also
$\sigma \in \Gamma( \mc{E}_+(1) )$ determines an affine connection $\nabla_a$
in $\bp$, then with respect to this $u^a$  is tangent to $\gamma$ as an affinely
parametrised geodesic, i.e. it solves \eqref{eq-aff-par-geod}.

Next we observe that the equation \eqref{eq-proj-inv-geod} has a nice
interpretation in the tractor picture. Let $\gamma$ be a curve with
weighted velocity $\mbf{u}^b\in \Gamma(T\gamma(-2))$. Observe that $\mbf{u}$
determines a weight zero $2$-tractor, which we will denote $\Sigma \in \Gamma( \Lambda^2\cT|_\gamma)$, along $\gamma$ via the map
\eqref{injp}:
\begin{equation}\label{eq-geod-bitract-proj}
  \Sigma^{\alpha\beta}:= \bX^{\alpha\beta}_b\mbf{u}^b.
\end{equation}
Then we have:
\begin{prop}\label{prop-proj-geod-par}
  An oriented curve $\gamma$ on $(M,\bp)$ is an unparametrised geodesic if and only if it admits
  a non-vanishing section $\mbf{u}^b\in \Gamma(T\gamma(-2))$ such that
  $\Sigma^{\alpha\beta} = \bX^{\alpha\beta}_b\mbf{u}^b$
  is parallel along $\gamma$.
\end{prop}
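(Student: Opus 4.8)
The plan is to compute the derivative of $\Sigma^{\alpha\beta}$ along $\gamma$ directly and to recognise the result as the image, under the injective map $\bX^{\alpha\beta}_b$, of the left-hand side of the projectively invariant geodesic equation \eqref{eq-proj-inv-geod}; the equivalence will then be immediate from Lemma \ref{lem-wt-proj-geod}. Since $\mbf{u}^b$ is a nowhere-vanishing section of $T\gamma(-2)$ spanning the (weighted) tangent space of $\gamma$, the statement that $\Sigma$ is parallel along $\gamma$ is precisely the condition $\mbf{u}^c\nabla_c\Sigma^{\alpha\beta}=0$, where $\nabla$ is the tractor connection coupled to a chosen $\nabla\in\bp$. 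Although the splitting tractors $X^\alpha$ and $W^\alpha{}_b$ depend on this choice, the tractor connection and the map $\bX$ are projectively invariant, so the final identity below is independent of it.

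First I would apply $\mbf{u}^c\nabla_c$ to $\Sigma^{\alpha\beta}=2X^{[\alpha}W^{\beta]}{}_b\mbf{u}^b$ by the Leibniz rule, using the tractor identities \eqref{trids}, namely $\nabla_cX^\alpha=W^\alpha{}_c$ and $\nabla_cW^\alpha{}_b=-\Rho_{cb}X^\alpha$. Writing $V^\alpha:=W^\alpha{}_c\mbf{u}^c$, this gives
\begin{equation}
  \mbf{u}^c\nabla_c\Sigma^{\alpha\beta}
  = 2V^{[\alpha}V^{\beta]}
  - 2\big(\Rho_{cb}\mbf{u}^c\mbf{u}^b\big)X^{[\alpha}X^{\beta]}
  + 2X^{[\alpha}W^{\beta]}{}_b\big(\mbf{u}^c\nabla_c\mbf{u}^b\big).
\end{equation}
The first two terms vanish by skew-symmetrisation over $\alpha,\beta$: the term $2V^{[\alpha}V^{\beta]}$ is the antisymmetric part of the symmetric quantity $V^\alpha V^\beta$, and $X^{[\alpha}X^{\beta]}=0$ likewise. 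Hence, recalling the definition of $\bX$ in \eqref{injp}, only the last term survives and
\begin{equation}
  \mbf{u}^c\nabla_c\Sigma^{\alpha\beta}
  = \bX^{\alpha\beta}_b\big(\mbf{u}^c\nabla_c\mbf{u}^b\big).
\end{equation}

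It remains to invoke the injectivity of $\bX^{\alpha\beta}_b$ recorded at \eqref{injp}: the displayed identity shows that $\Sigma^{\alpha\beta}$ is parallel along $\gamma$ if and only if $\mbf{u}^c\nabla_c\mbf{u}^b=0$. By Lemma \ref{lem-wt-proj-geod} the existence of a non-vanishing $\mbf{u}^b\in\Gamma(T\gamma(-2))$ satisfying the latter is exactly the condition that $\gamma$ be an unparametrised geodesic, which proves both directions simultaneously. I expect no serious obstacle here; the only points that need care are the two vanishing arguments in the first display, where one must check that the quadratic-in-$W$ term and the Schouten term are annihilated purely by the antisymmetrisation over the tractor indices $\alpha,\beta$, and not through any hidden use of a curvature or normalisation condition.
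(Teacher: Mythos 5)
Your proof is correct and is essentially identical to the paper's: the authors establish exactly the identity $\mbf{u}^a\nabla_a \Sigma^{\alpha\beta} = \bX^{\alpha\beta}_b\, \mbf{u}^a\nabla_a \mbf{u}^b$ (their equation \eqref{magic}) via the tractor identities \eqref{trids}, and conclude by the injectivity of $\bX$ together with Lemma \ref{lem-wt-proj-geod}. Your added care in displaying the two terms killed by the skew-symmetrisation over $\alpha,\beta$ is a correct expansion of the computation the paper leaves implicit.
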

\begin{proof} Using the identities \eqref{trids} we find
  \begin{equation}\label{magic}
    \mbf{u}^a\nabla_a \Sigma^{\alpha\beta} = \bX^{\alpha\beta}_b \mbf{u}^a\nabla_a \mbf{u}^b \, ,
  \end{equation}
  from which the result follows.
\end{proof}

\noindent We are now ready to prove the main Theorem  \ref{main-p} from the introduction:

\smallskip

\noindent{\em Proof of Theorem \ref{main-p}.}
The forward implication of the first statement is immediate from Proposition \ref{prop-proj-geod-par}.

For the converse let suppose that $\gamma$ is a curve and $\Sigma$ is
a non-vanishing $2$-tractor parallel along $\gamma$  and satisfying \eqref{main-p-eqn}. Since $X$ is nowhere
zero, we have $\Sigma =X\wedge V$ for some $V\in \Gamma( \cT (-1)|_\gamma )$.

Now choose (locally) a parametrisation of $\gamma$ with associated velocity $u$.
Differentiating  \eqref{main-p-eqn} along $\gamma$, then using \eqref{trids} and the fact that $\Sigma$ is parallel, we have
$$
  U\wedge \Sigma =0
$$
where $U^\beta:=u^b W^\beta_b$. Taking into account the weight of $V$,  it follows that locally along  $\gamma$
$$
  V^\alpha = \mbf{u}^b W^\alpha{}_b +\tau X^\alpha
$$ where $\mbf{u}\in \Gamma(T\gamma (-2))$ is a weighted velocity of
$\gamma$ and $\tau\in \Gamma(\ce(-2))$. Thus
$$
  \Sigma^{\alpha\beta}= \bX^{\alpha\beta}_b\mbf{u}^b .
$$
Then since $\Sigma $ is parallel along $\gamma$ it follows at once from \eqref{magic}
that $u^a\nabla_a \mbf{u}^b=0$, or equivalently $\mbf{u}^a\nabla_a \mbf{u}^b=0$.
So according to Lemma \ref{lem-wt-proj-geod}, $\gamma$ is an unparametrised geodesic.

Since $\Sigma$ satisfying \eqref{main-p-eqn} is necessarily of the form
$\Sigma^{\alpha\beta}=\bX^{\alpha\beta}_b\mbf{u}^b$ the uniqueness statement also follows.
\hfill $\Box$

\subsection{The model, incidence, and the double fibration}\label{model}

Recall that the standard homogeneous model for oriented projective
manifolds is the $n$-dimensional sphere arising as the ray
projectivisation $S^{n}:=\mathbb{P}_+(\mathbb{R}^{n+1})$ of
$\mathbb{R}^{n+1}$ (i.e.\ the double cover of $\mathbb{RP}^{n}$).

In this case it is well known \cite{BEG}
that the standard tractor bundle $\cT$ is naturally identified with the trivial bundle over $S^n$
\begin{equation}\label{pTc}
  T\mathbb{R}^{n+1}/\sim \to S^n
\end{equation}
where $\sim$ is the equivalence relation defined by $(p,v_p)\sim
  (q,\tilde{v}_q)$ iff $p,q$ belong to the same ray and $v_p$ and
$\tilde{v}_q$ are parallel with respect to the standard affine structure
on $\mathbb{R}^{n+1}$. Let us write $X^{\underline{\alpha}}$ for the
standard coordinates on $\mathbb{R}^{n+1}$, so that
$[X^{\underline{\alpha}}]$ denotes homogeneous coordinates on
$S^n$. Then it is easily verified that the Euler vector field
$X^{\underline{\alpha}}\partial/\partial X^{\underline{\alpha}} $ on $\mathbb{R}^{n+1}$ determines
a section $X^\alpha$ of $\cT(1)$ and that this is the (projective) canonical tractor on
$S^n$. Indeed it was these facts that inspired the projective tractor
notation in \cite{BEG}.

Choosing a section of $\mathbb{R}^{n+1}\to S^n$, for example the round
sphere, to represent the manifold $S^n$, it now follows immediately
from Theorem \ref{main-p} (and its proof) that on $S^n$, each geodesic
trace is the curve of points lying in the span of a 2-plane $\Sigma$
through the origin. That is, the geodesic traces are the great
circles. Of course this last fact is included in any understanding
that $\mathbb{P}_+(\mathbb{R}^{n+1})$ is a model for projective
geometry. Our main point here is that in this setting Theorem
\ref{main-p}, especially the incidence relation \eqref{main-p-eqn}, reduces precisely to the usual incidence relation
describing the great circles. Turning this around we see Theorem
\ref{main-p} generalises this incidence characterisation of great
circles to a general characterisation of geodesic traces.

In fact, in the homogeneous setting here, we can describe
the space of oriented geodesic traces of $S^n$ explicitly, as the $(2n-2)$-dimensional `twistor' space $\mbb{P}_+ \T$ of all oriented $2$-planes in $\R^{n+1}$ -- here $\mbb{T} := \{ \Sigma^{\alpha \beta} \in \Lambda^2 \R^{n+1} : \Sigma^{[\alpha \beta} \Sigma^{\gamma] \delta} = 0 \}$.

To relate geometric objects between $S^n$ and $\mbb{P}_+ \T$, we introduce
their correspondence space, that is the $(2n-1)$-dimensional
submanifold of $S^{n-1} \times \mbb{P}_+ \T$ defined by the incidence
relation $X \wedge\Sigma = 0$, for $[X] \in S^{n-1}$, $[ \Sigma ] \in
  \mbb{P}_+ \T$.  As a bundle over $S^n$, it is the ray projectivisation
$\mbb{P}_+(T S^n)$ of the tangent bundle of $S^n$, i.e.\ the fibre
$\mbb{P}_+(T_x S^n) \cong S^{n-1}$ at a point $x$ of $S^n$ is the set
of all oriented directions at $x$.  The correspondence space is
fibered over both $S^n$ and $\mbb{P}_+ \T$:
\begin{align*}
  \xymatrix{ & \mbb{P}_+ ( T S^n) \ar[dl] \ar[dr] &                \\
  S^n        &                                    & \mbb{P}_+ \T }
\end{align*}
A point in $\mbb{P}_+ \T$ gives rise to a geodesic in $S^n$ and conversely any geodesic in $S^n$ arises in this way. Indeed, a curve $\gamma$ in $S^n$ lifts canonically to a curve $\widetilde{\gamma}$ in $\mbb{P}_+ ( T S^n)$: the point of $\wt{\gamma}$ in the fibre over a point $\mr{x}$ of $\gamma$ is simply the oriented tangent direction $\dot{\gamma}$ at $\mr{x}$.  It is clear that $\gamma$ is a geodesic if and only if $\wt{\gamma}$ is (a subset of) a fibre of $\mbb{P}_+ ( T S^n)$ over $\mbb{P}_+ \T$.

In view of Proposition \ref{prop-proj-geod-par}, a point of  $\mbb{P}_+ \T$ gives rise to a simple section, up to a positive scale, of the tractor bundle $\wedge^2 \mc{T}$ over $S^n$, which is parallel with respect to the tractor connection, along \emph{any} direction, not only along the geodesic it defines.

\begin{rem}
  The spaces under considerations are generalized flag manifolds and admit a description as homogeneous spaces with automorphism group $G = \SL(\R^{n+1})$.
\end{rem}

\begin{rem}
  Replacing $\mbb{P}_+ ( T S^n)$ and $\mbb{P}_+ \T$ by $\mbb{P} ( T S^n)$ and $\mbb{P} \T$ respectively gives us a description of \emph{unoriented} great circles in $S^n$.
\end{rem}

\subsubsection{Initial conditions for geodesics}\label{sect-IC-geod}
The geodesic equation on a projective manifold $(M, \bp)$ is a
second-order (semi-linear) ODE.  Thus given a point $\mr{x}$ and a
vector $\mr{u}$ at that point, there is a unique parametrised (and
thus oriented) geodesic $\gamma: I \rightarrow M$ with $\gamma(0) =
  \mr{x}$ and velocity $\dot{\gamma} (0)= \mr{u}$. For a parametrisation
independent description, we merely need a direction at $\mr{x}$,
i.e.\ a point in the fibre $\mbb{P}_+(T_{\mr{x}} M)$.

In the homogeneous picture, a point $[\mr{\Sigma}]$ in $\mbb{P}_+ \T$
corresponds to an oriented great circle in $S^n$. But it also describes the
initial conditions at a point $\mr{x}$ of $S^n$ with homogeneous
coordinates $[\mr{X}]$.  Indeed,  $[\mr{\Sigma}]$ and $[\mr{X}]$
single out a point in the fibre $\mbb{P}_+(T_{\mr{x}} S^n)$, which is none
other than the direction of a geodesic (i.e. a great circle) at
$\mr{x}$.

Although there is no twistor space in the curved setting, this understanding still carries over.  Here, our initial conditions at a point $\mr{x}$ can simply be prescribed by a simple element of $\mbb{P}_+ (\Lambda^2 \mc{T}_{\mr{x}})$, which is isomorphic to $\mbb{P}_+\T$ as described above.

\begin{thm}\label{thm-IC-geod}
  Let $(M,\bp)$ be an $n$-dimensional projective manifold. Fix a point $\mr{x}$ in $M$ so that $X_{\mr{x}}$ is the canonical tractor based at $\mr{x}$. Then, for every non-zero element $\mr{\Sigma}$ of $\Lambda^2 \mc{T}_{\mr{x}}$ satisfying
  \begin{align}\label{eq-p-IC}
    X_{\mr{x}}\wedge \mr{\Sigma} =0 \, ,
  \end{align}
  locally  there exists a unique unparametrised geodesic $\gamma$ through $\mr{x}$. Further, the tractor $\Sigma$ associated to $\gamma$ (via Theorem \ref{main-p}) satisfies
  $\Sigma_{\mr{x}}= \lambda \mr{\Sigma}$ for some constant $\lambda>0$.  Any two non-zero elements of $\Lambda^2 \mc{T}_{\mr{x}}$ satisfying \eqref{eq-p-IC} give rise to the same oriented geodesic through $\mr{x}$ if and only if they differ by a positive constant multiple.
\end{thm}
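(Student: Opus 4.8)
The plan is to reduce the statement to the standard second-order ODE theory for geodesics, after decoding the incidence condition \eqref{eq-p-IC} as the prescription of an oriented weighted direction at $\mr{x}$. First I would analyse the fibrewise linear-algebra content of \eqref{eq-p-IC}. Since $X_{\mr{x}}$ is a nonzero element of $\cT_{\mr{x}}(1)$, any $\mr{\Sigma}$ with $X_{\mr{x}} \wedge \mr{\Sigma} = 0$ is decomposable as $\mr{\Sigma} = X_{\mr{x}} \wedge V$ for some $V \in \cT_{\mr{x}}(-1)$, and wedging with $X_{\mr{x}}$ annihilates the $X_{\mr{x}}$-component of $V$. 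Using a splitting exactly as in the proof of Theorem \ref{main-p}, this shows that the nonzero solutions of \eqref{eq-p-IC} in $\Lambda^2\cT_{\mr{x}}$ are precisely the nonzero elements of the image of the injective map $\bX^{\alpha\beta}_b$ of \eqref{injp}, so that $\mr{\Sigma} = \bX^{\alpha\beta}_b \mbf{u}^b_{\mr{x}}$ for a unique nonzero $\mbf{u}_{\mr{x}} \in T_{\mr{x}}M(-2)$. Because $\ce(-2)$ is canonically oriented, being a real power of the oriented line bundle $\mathcal{K}$, the weighted vector $\mbf{u}_{\mr{x}}$ determines an oriented direction at $\mr{x}$, and positive rescalings of $\mr{\Sigma}$ correspond exactly to positive rescalings of $\mbf{u}_{\mr{x}}$, hence to the same oriented direction.

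Next I would produce the geodesic. Fixing $\nabla \in \bp$ and a scale $\sigma \in \ce_+(1)$ converts $\mbf{u}_{\mr{x}}$ into a genuine tangent vector $u_{\mr{x}} = \sigma^2 \mbf{u}_{\mr{x}} \in T_{\mr{x}}M$; the affinely parametrised geodesic equation \eqref{eq-aff-par-geod} with initial data $(\mr{x}, u_{\mr{x}})$ then has a unique local solution by the Picard--Lindel\"{o}f theorem. By Lemma \ref{lem-wt-proj-geod} and the remarks following it, the resulting oriented unparametrised geodesic $\gamma$ through $\mr{x}$ is independent of the auxiliary choices of $\nabla$ and $\sigma$, and depends only on the oriented direction fixed above. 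This establishes the asserted local existence and uniqueness of $\gamma$.

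Finally I would match $\mr{\Sigma}$ with the tractor of Theorem \ref{main-p}. By Proposition \ref{prop-proj-geod-par} the parallel tractor along $\gamma$ is $\Sigma_\gamma = \bX^{\alpha\beta}_b \mbf{u}^b$, where $\mbf{u}$ is the weighted velocity of $\gamma$; this is, at $\mr{x}$, a positive multiple of the $\mbf{u}_{\mr{x}}$ extracted above, since they define the same oriented direction and the weighted velocity is unique up to a positive constant by Lemma \ref{lem-wt-proj-geod}. Injectivity of $\bX^{\alpha\beta}_b$ then gives $\Sigma_\gamma|_{\mr{x}} = \lambda \mr{\Sigma}$ with $\lambda > 0$. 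For the last assertion: if $\mr{\Sigma}_2 = c\,\mr{\Sigma}_1$ with $c > 0$, the two encode the same oriented direction and hence the same $\gamma$; conversely, if two incidence solutions yield the same oriented $\gamma$, then by the uniqueness clause of Theorem \ref{main-p} each is a positive multiple of $\Sigma_\gamma|_{\mr{x}}$, so they differ by a positive constant.

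The main obstacle is not any single hard estimate but rather the careful bookkeeping of weights, orientations, and positive constants throughout. In particular, the crux is verifying that the fibrewise correspondence $\mr{\Sigma} \leftrightarrow \mbf{u}_{\mr{x}}$ furnished by $\bX^{\alpha\beta}_b$ is genuinely orientation-compatible, so that the phrase \emph{positive} multiple is preserved at every stage of the argument; everything else is an application of results already established in the excerpt together with elementary ODE theory.
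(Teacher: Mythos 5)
Your proposal is correct and follows essentially the same route as the paper's proof: decode the incidence relation \eqref{eq-p-IC} as prescribing a (weighted) tangent direction at $\mr{x}$ via the image of $\bX^{\alpha\beta}_b$, invoke standard ODE existence and uniqueness for the geodesic equation \eqref{eq-aff-par-geod}, and track positive rescalings. You are somewhat more explicit than the paper about the weight and orientation bookkeeping and about verifying $\Sigma_{\mr{x}}=\lambda\mr{\Sigma}$ via Proposition \ref{prop-proj-geod-par} and the uniqueness clause of Lemma \ref{lem-wt-proj-geod}, but the underlying argument is the same.
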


\begin{proof}
  Let $\mr{x}$ and $\mr{\Sigma}$ be as in the statement of the
  theorem. The condition \eqref{eq-p-IC} tells us that $\mr{\Sigma}$
  is simple, and in particular, in a splitting of $\mc{T}_{\mr{x}}$,
  can be written in the form $\mr{\Sigma}^{AB} = 2 \mr{u}^b X^{[A}
    W^{B]}{}_b|_{\mr{x}}$ for some vector $\mr{u}^b$ in $T_{\mr{x}}
    M$. These are precisely the initial conditions which determine the
  unique local solution $I  \rightarrow \gamma$ to the geodesic equation
  \eqref{eq-aff-par-geod} through $\mr{x}$ for some $I \subset
    \R$. Clearly, if $\mr{\widetilde{\Sigma}} = c \, \mr{\Sigma}$ for
  some constant $c>0$, then $\mr{\widetilde{\Sigma}}$ yields
  $\mr{\widetilde{u}}=c \, \mr{u}$ in $T_{\mr{x}} M$, which determines
  the same oriented unparametrised geodesic.
\end{proof}

\section{Conformal geometry and distinguished curves}\label{c-sect}

On a given $n$-manifold $M$ ($n\geq 2$) two Riemannian or
pseudo-Riemannian metrics $g$ and $\wh{g}$ are said to be conformally
related if
\begin{align}\label{eq-conf-rel}
  \wh{g} & = \ee^{2 \, \phi} g \, , & \mbox{for some smooth function $\phi$.}
\end{align}
The Levi-Civita connections of the two metrics are related by the equation
\begin{align}\label{ltrans}
  \wh{\nabla}_a v^b & = \nabla_a v^b + \Upsilon_a v^b - v_a \Upsilon^b + \Upsilon_c v^c \delta^b{}_a \, , & \mbox{$v \in \Gamma(T M)$, }
\end{align}
where $\Upsilon_a := \nabla_a \phi$.

It follows easily from \eqref{ltrans} that in general the geodesics
of $g$ are not the same as the geodesics of $\wh{g}$, even after
possible reparametrisation. Null geodesics form the exception. A
parametrised geodesic $I\to \gamma$ is a {\em null geodesic} if it is
a geodesic and its velocity field $u^b$ is (at some point, equivalently
everywhere,) null. Thus we have $u^a\nabla_a u^b=0$ and $u_bu^b=0$ along the curve.
Thus from \eqref{ltrans} we deduce
\begin{equation}\label{ngf}
  u^a\wh{\nabla}_a u^b = 2(\Upsilon_au^a)u^b ,
\end{equation}
and we conclude that there is a reparametrisation of the curve with
velocity $\wh{u}^a$ satisfying $\wh{u}^a\nabla_a \wh{u}^b =0$. Thus
each null geodesic trace is conformally invariant.

As mentioned in the introduction the distinguished nowhere-null curves are
governed by the third-order conformal circle equation. To treat all
these things we review the basic tools of conformal geometry. We
follow the approach developed  in
\cite{BEG,CG-irred,G-PetersonCMP}. A useful summary review with some detail may be found in \cite{Curry-G-conformal}.

As in the case of projective geometry, the density bundles will be important for us.
Recall from Section \ref{proj-sect} that any manifold is
equipped with the oriented line bundle $\mathcal{K}:=(\Lambda^{n}
  TM)^2$. For conformal geometry it is convenient to adopt a notation for its
roots that differs slightly from that used above for projective
geometry: given $w\in \mathbb{R}$ we write
\begin{equation} \label{cdensities}
  \ce[w]:=\mathcal{K}^{\frac{w}{2n}} .
\end{equation}
For two metrics, conformally related as in \eqref{eq-conf-rel},
the induced  Levi-Civita connections
$\nabla$ and $\wh{\nabla}$
on the bundles $\cE[ w ]$
are related by
\begin{equation}\label{conformal_change_on_densities}
  \widehat{\nabla}_a\sigma=\nabla_a\sigma+ w \Upsilon_a \sigma \, .
\end{equation}

\subsection{Conformal geometry and conformal tractor calculus}\label{csec}

\newcommand{\bg}{\mathbf{g}}
\newcommand{\cc}{\boldsymbol{c}}
A \emph{conformal structure} on a smooth manifold $M$ is an
equivalence class $\bc$ of metrics, whereby two metrics $g,
  \wh{g}\in\bc$ are conformally related as in \eqref{eq-conf-rel}.
A conformal manifold is equipped with a canonical section $\mbf{g}$ of $\odot^2T^*M[2]$ called the {\em conformal metric} (see e.g. \cite{Curry-G-conformal}).
This is non-degenerate (as a metric on $TM[-1]$) and carries the information of the conformal structure. Indeed any choice of metric $g\in \bc$ is equivalent to a choice of {\em scale} $\si\in \Gamma (\ce_+[1])$ by the formula
$$
  g=\si^{-2} \mbf{g} \, .
$$
The conformal structure is said to be of signature $(p,q)$ if this is the signature of $\mbf{g}$ (equivalently the signature of any $g\in \bc$).

For Riemannian and pseudo-Riemannian geometry we describe another decomposition of
the curvature. The curvature tensor $R_{ab}{}^{c}{}_{d}$ of the Levi-Civita connection $\nabla_a$ of a metric  splits as
\begin{align*}
  R_{abcd} & = W_{abcd} + 2 \, \mbf{g}_{c[a} \Rho_{b]d} - 2 \, \mbf{g}_{d[a} \Rho_{b]c} \, ,
\end{align*}
where $W_{ab}{}^{c}{}_{d}$ is the conformal Weyl tensor and $\Rho_{ab}$
is the conformal Schouten tensor. The conformal Weyl tensor is totally
tracefree, satisfies the algebraic Bianchi identities, and is
conformally invariant. The conformal Cotton tensor is defined by
$Y_{abc} := 2 \, \nabla_{[b} \Rho_{c]a}$. The Bianchi identity yields
$Y_{cab} = (n-3) \nabla_d W_{ab}{}^d{}_c$.

As for projective geometry, on a general conformal manifold there is
no distinguished connection on $TM$ but there is one on a related higher
tractor bundle that we again denote $\cT$. In this case one considers the bundle $J^2\ce[1]$ of 2-jets
of the $\ce[1]$. By definition one has the jet exact sequence at 2-jets
\begin{equation}\label{J2}
  0\to \odot^2T^*M[1]\to J^2\ce[1]\to J^1\ce[1]\to 0.
\end{equation}
but on a conformal manifold we may split $\odot^2 T^*M$, and hence also
$\odot^2T^*M[1]$ using $\mbf{g}$. We have $\odot^2T^* M[1]=\ce[-1] \oplus \odot^2_0T^*
  M[1] $ where $\odot^2_0T^* M[1]$ is the metric trace-free part of $\odot^2T^*
  M[1]$, $\ce[-1]\to \odot^2T^* M[1]$ is included by $\rho\mapsto \rho \mbf{g}$
and this is split by taking $1/n$ times the $\mbf{g}$-trace. The standard
tractor bundle $\cT$ (or we write $\ce^A$ in the abstract index
notation) is defined to be quotient of $J^2\ce[1]$ by the image of the
map $\odot^2_0T^*M[1]\to J^2\ce[1]$. It then follows from \eqref{J2} that
$\cT$ has the composition structure
\begin{equation}\label{tseqs}
  0\to \ce[-1]\stackrel{X}{\to} \cT\to J^1\ce[1]\to 0 \quad\mbox{and} \quad 0\to T^*M[1]\to  J^1\ce[1]\to \ce[1]\to 0
\end{equation}
which we summarise as $\ce^A=\ce[1]\lpl \ce^a[1]\lpl\ce[-1] $. The
mapping $X$ may be viewed as a section of $\ce^A[1]$ and this is the
conformal {\em canonical} tractor. It turns out that there is
an invariant tractor metric $h=h_{AB}$, of signature $(p+1,q+1)$, and
with respect to this, $X$ is null, $h(X,X)=0$. Note that this tractor metric identifies $\mc{T}$ with the dual tractor bundle $\mc{T}^*$.

Given a choice of metric $g\in \bc$ the sequences \eqref{tseqs} split, as
discussed in e.g.\ \cite{CG-irred,Curry-G-conformal}, so that
$\ce^A\stackrel{g}{=}\ce[1]\oplus \ce^a[1]\oplus\ce[-1] $, and an
element $V^A$ of $\ce^A$ may be represented by a triple
$(\si,\mu_a,\rho)$, or equivalently by
\begin{equation}\label{Vsplit}
  V^A=\si Y^A+\mu^aZ^A {}_a+\rho X^A,
\end{equation}
where we have raised indices using $h^{-1}=h^{AB}$ and
$\mbf{g}^{-1}=\mbf{g}^{ab}$. The last display defines the algebraic splitting
operators $Y:\ce[1]\to \cT$ and $Z :TM[-1]\to \cT$ (determined by the
choice $g\in \bc$) which may be viewed as sections $Y^A\in
  \Gamma(\ce^A[-1])$ and $Z^A{}_a\in \Gamma(\ce^A_a[1])$. As a quadratic
form the tractor metric is given by
$$
  V^A\mapsto 2\si \rho + \mbf{g}_{ab}\mu^a\mu^b ,
$$
in terms of the splitting, or equivalently its inverse is
$$
  h^{AB}= 2X^{(A}Y^{B)}+\mbf{g}^{ab}Z^A{}_aZ^B{}_b.
$$
We then use the tractor metric to raise and lower tractor indices
so we also have $h_{AB}= 2X_{(A}Y_{B)}+\mbf{g}_{ab}Z_A{}^aZ_B{}^b$, with
$X^AY_A=1$, $Z_A{}^aZ^A{}_b=\delta^{a}{}_b $ and all other pairings of the
splitting operators giving a zero section.

While $X^{A}$ is conformally invariant, a change of tractor splitting given
by \eqref{eq-conf-rel} is equivalent to the transformations
\begin{align*}
  \wh{Z}^{A}{}_a & = Z^{A}{}_a - \Upsilon_a X^{A} \, , & \wh{Y}^{A} & = Y^{A} + \Upsilon^a Z^{A}{}_a + \frac{1}{2} \Upsilon^a \Upsilon_a X^{A}\,
\end{align*}
where, as usual, $\Upsilon_a=\nabla_a \phi$.
Thus  we conclude, for example, that there is a conformally invariant injective
bundle map from $TM[-2]$ into $\Lambda^2\cT$:
\begin{equation}\label{injpc}
  \bX^{AB}_b: \ce^b[-2]\to \ce^{[AB]} \quad\mbox{given by} \quad \mbf{v}^b\mapsto 2X^{[A} Z^{B]}{}_b \mbf{v}^b.
\end{equation}

There is a canonical conformally invariant (normal) tractor connection on $\cT$ that preserves $h$ that we shall
also denote $\nabla_a$.
It can be coupled
to the Levi-Civita connection of any metric in $\bc$, and its
action on the splitting operators
is then given by
\begin{align}\label{ctrids}
  \nabla_a X^{A} & =  Z^{A}{}_a \, , & \nabla_a Z^{A}{}_b & = - \Rho_{ab} X^{A} - \mbf{g}_{ab} Y^{A} \, , &
  \nabla_a Y^{A} & = \Rho_a{}^{b} Z^{A}{}_b \, .
\end{align}
The general action on a section of a tractor bundle can then be deduced from the Leibniz rule.

Finally, the \emph{tractor curvature} $\Omega_{ab}{}^{C}{}_{D}
$ of the tractor connection, defined by $\Omega_{ab}{}^{{C}}{}_{{D}} \Phi^{D} := 2 \, \nabla_{[a} \nabla_{b]}
  \Phi^{C}$, for any $\Phi^A \in \Gamma( \mc{T} )$, can be expressed in a splitting as
\begin{align}\label{ctract-curv}
  \Omega_{ab}{}_{C D} & = W_{abcd} Z_{C}{}^c Z_{D}{}^d -2 Y_{cab} X^{\phantom{c}}_{[C} Z_{D]}{}^c
\end{align}
A conformal structure is said to be \emph{(locally) flat} if this
tractor curvature vanishes as this happens if and only if there is
locally a flat metric in the conformal class.

\subsection{Null geodesics}\label{null-sec}
In view of the observation \eqref{ngf}, some conformal aspects of null
geodesics correspond to projective features of arbitrary
geodesics. The following Lemma gives a parametrisation independent equation for null
geodesics and is analogous to Lemma
\ref{lem-wt-proj-geod}. Here we write $T \gamma [w]$ for $T\gamma \otimes \mc{E}[w]|_\gamma$ for any weight $w \in \R$.
\begin{lem}\label{lem-wt-null-geod}
  Let $\gamma$ be an oriented curve on $(M,\bc)$.  Then $\gamma$ is a unparametrised null
  geodesic, with respect to $\bc$, if and only if there exists a
  non-vanishing null vector
  field
  $\mbf{u}^a\in \Gamma(T\gamma [-2])$ along
  $\gamma$ satisfying the conformally invariant equation
  \begin{align}\label{eq-proj-inv-conf}
    \mbf{u}^b \nabla_b \mbf{u}^a & = 0 \, , & \mbox{for any $g \in \bc$ with Levi-Civita $\nabla$.}
  \end{align}
  The weighted velocity field $\mbf{u}$ is unique up to a positive factor that is
  constant along $\gamma$.
\end{lem}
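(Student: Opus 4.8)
The plan is to follow the proof of Lemma \ref{lem-wt-proj-geod} step by step, adapting each part to the conformal setting. The genuinely new feature — and the step I would carry out first and most carefully — is verifying that both the weight $-2$ \emph{and} the null condition on $\mbf{u}^a$ are exactly what make equation \eqref{eq-proj-inv-conf} conformally invariant. After that, the forward implication, the converse, and the uniqueness statement are formal analogues of the projective case.

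First I would check conformal invariance of \eqref{eq-proj-inv-conf}. Writing $\mbf{u}^a \in \Gamma(T\gamma[-2])$ and combining the vector transformation \eqref{ltrans} with the density rule \eqref{conformal_change_on_densities} at weight $-2$, the two $\Upsilon_a\mbf{u}^b$ contributions partially cancel and one obtains
\[
  \wh{\nabla}_a \mbf{u}^b = \nabla_a \mbf{u}^b - \Upsilon_a \mbf{u}^b - \mbf{u}_a \Upsilon^b + (\Upsilon_c \mbf{u}^c)\, \delta^b{}_a \, ,
\]
where $\mbf{u}_a := \mbf{g}_{ab}\mbf{u}^b$. Contracting with $\mbf{u}^a$, the term $-(\Upsilon_a\mbf{u}^a)\mbf{u}^b$ cancels against $(\Upsilon_c\mbf{u}^c)\mbf{u}^b$, and the only remaining obstruction is the term proportional to the null norm $\mbf{g}_{ab}\mbf{u}^a\mbf{u}^b$, which vanishes precisely because $\mbf{u}^a$ is null. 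Hence $\mbf{u}^a\wh{\nabla}_a\mbf{u}^b = \mbf{u}^a\nabla_a\mbf{u}^b$, so \eqref{eq-proj-inv-conf} is independent of the choice of $g\in\bc$. Since nullity is itself conformally invariant, the condition that $\mbf{u}$ be null is well-defined on $(M,\bc)$.

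For the forward implication, given an unparametrised null geodesic $\gamma$ and a fixed $g\in\bc$, I would take a velocity field $u^a$ along $\gamma$ consistent with the orientation; because the trace is that of a null geodesic, $u^a$ satisfies $u^b\nabla_b u^a = f u^a$ for some smooth $f$, with $u^a$ null. Solving $2\,\si^{-1}u^a\nabla_a\si = f$ locally for $\si\in\Gamma(\ce_+[1]|_\gamma)$ and setting $\mbf{u}^a := \si^{-2}u^a \in \Gamma(T\gamma[-2])$ then yields a null weighted velocity satisfying \eqref{eq-proj-inv-conf}, exactly as in the projective computation. Conversely, given such a $\mbf{u}^a$ and any scale $\si$, the weight-zero field $u^a := \si^2\mbf{u}^a$ is null and satisfies $u^b\nabla_b u^a = f u^a$ for some $f$, so it is a pre-geodesic that may be locally reparametrised to an affinely parametrised null geodesic; thus $\gamma$ is an unparametrised null geodesic. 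Finally, uniqueness up to a positive constant follows by writing any second solution as $\lambda\mbf{u}^a$ with $\lambda>0$ (both being orientation-consistent) and substituting into \eqref{eq-proj-inv-conf} to deduce $\mbf{u}^b\nabla_b\lambda = 0$, i.e.\ $\lambda$ is constant along $\gamma$.

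The main obstacle is the conformal-invariance computation: unlike the projective change \eqref{projective_change}, the conformal transformation \eqref{ltrans} carries the extra term $-v_a\Upsilon^b$, and it is exactly the null hypothesis that annihilates its contribution. This is why nullity must be built into the statement and cannot be dropped, and it is the only place the argument departs substantively from the proof of Lemma \ref{lem-wt-proj-geod}; the remaining steps are formally identical, with the projective densities $\ce(w)$ replaced by their conformal counterparts $\ce[w]$ and the conformal metric $\mbf{g}$ used to make sense of ``null''.
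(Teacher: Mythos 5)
Your proof is correct and follows essentially the same route as the paper, which simply notes that conformal invariance follows from \eqref{ltrans} and \eqref{conformal_change_on_densities} and that the rest is an adaptation of the proof of Lemma \ref{lem-wt-proj-geod} using \eqref{ngf}. You have merely carried out that adaptation explicitly, including the key cancellation showing that the weight $-2$ and the null condition together yield invariance.
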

\begin{proof}
  The conformal invariance of \eqref{eq-proj-inv-conf} is an easy consequence of
  \eqref{ltrans}, \eqref{conformal_change_on_densities}.

  The remainder of the proof is then a simple adaption of the proof of
  Lemma \ref{lem-wt-proj-geod} that uses now \eqref{ngf}.
\end{proof}

We now find the tractor picture also agrees with the projective case as follows. Given a curve $\gamma$ and
$\mbf{u}^a\in \Gamma(T\gamma [-2])$ we may use \eqref{injpc} to form the conformally
invariant weight zero $2$-tractor field
\begin{align}\label{eq-geod-bitract-null}
  \Sigma^{{A}{B}} & := \bX^{AB}_b \mbf{u}^b ,
\end{align}
along $\gamma$.  By construction, $\Sigma$ is simple, i.e.\ $\Sigma
  \wedge \Sigma = 0$. Furthermore, assuming $\mbf{u}^a$ is
non-vanishing, it is easily verified that, the curve $\gamma$ is null
if and only if $\Sigma$ is \emph{totally null}, i.e.\ the span of $\Sigma$ is totally null. This can be characterised equivalently by the property that
\begin{align}\label{nil-eq}
  \Sigma^A{}_B\Sigma^B{}_C=0 \, ,
\end{align}
that is, $\Sigma^A{}_B$ is nilpotent of order $2$.
We then have a conformal analogue
of Proposition \ref{prop-proj-geod-par}.

\begin{prop}\label{nullprop}
  An oriented curve $\gamma$ on $({M},\bc)$ is an unparametrised null geodesic if and only if it admits a non-vanishing section $\mbf{u}^b\in \Gamma(T\gamma [-2])$ such that
  $\Sigma^{\alpha\beta} = \bX^{\alpha\beta}_b\mbf{u}^b$
  is parallel along $\gamma$.
\end{prop}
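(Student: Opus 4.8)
The plan is to mirror exactly the structure of Proposition \ref{prop-proj-geod-par}, since the statement is its conformal analogue and the hypotheses are set up in perfect parallel. The key computational engine will be an identity of the form
\begin{equation*}
  \mbf{u}^a \nabla_a \Sigma^{AB} = \bX^{AB}_b \, \mbf{u}^a \nabla_a \mbf{u}^b ,
\end{equation*}
which is the conformal counterpart of \eqref{magic}. First I would establish this identity by differentiating $\Sigma^{AB} = \bX^{AB}_b \mbf{u}^b = 2 X^{[A} Z^{B]}{}_b \mbf{u}^b$ along $\gamma$ using the Leibniz rule together with the conformal tractor derivatives \eqref{ctrids}. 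The crucial feature is that the terms arising from differentiating the splitting operators $X^{[A} Z^{B]}{}_b$ must cancel or vanish upon contraction with $\mbf{u}^a$ and the skew-symmetrisation.

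The heart of the calculation is as follows. Applying $\mbf{u}^a \nabla_a$ to $2 X^{[A} Z^{B]}{}_b \mbf{u}^b$ produces, by \eqref{ctrids}, a term in $\nabla_a X^A = Z^A{}_a$ and a term in $\nabla_a Z^B{}_b = -\Rho_{ab} X^B - \mbf{g}_{ab} Y^B$, plus the term where the derivative hits $\mbf{u}^b$. The $\nabla X$ term contributes $2 Z^{[A}{}_a Z^{B]}{}_b \mbf{u}^a \mbf{u}^b$, which vanishes because $\mbf{u}^a \mbf{u}^b$ is symmetric while the tractor factor is skew. The $\nabla Z$ term contributes $2 X^{[A} ( -\Rho_{ab} X^{B]} - \mbf{g}_{ab} Y^{B]} ) \mbf{u}^a \mbf{u}^b$; the $\Rho$ piece dies because $X^{[A} X^{B]} = 0$, and the $\mbf{g}_{ab} Y$ piece contributes $-2 X^{[A} Y^{B]} \mbf{g}_{ab} \mbf{u}^a \mbf{u}^b$. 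This is precisely where nullity enters: since $\mbf{u}$ is a null vector field, $\mbf{g}_{ab} \mbf{u}^a \mbf{u}^b = 0$, so this term also vanishes. What survives is exactly $\bX^{AB}_b \, \mbf{u}^a \nabla_a \mbf{u}^b$, giving the desired identity.

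With that identity in hand the proposition follows immediately in both directions. For the forward implication, if $\gamma$ is an unparametrised null geodesic then by Lemma \ref{lem-wt-null-geod} there is a non-vanishing null $\mbf{u}^a \in \Gamma(T\gamma[-2])$ with $\mbf{u}^b \nabla_b \mbf{u}^a = 0$; the identity then shows $\mbf{u}^a \nabla_a \Sigma^{AB} = 0$, i.e.\ $\Sigma$ is parallel along $\gamma$. Conversely, if such a non-vanishing $\mbf{u}^b$ exists with $\Sigma = \bX^{AB}_b \mbf{u}^b$ parallel, the identity forces $\bX^{AB}_b \, \mbf{u}^a \nabla_a \mbf{u}^b = 0$; since $\bX$ is injective by \eqref{injpc}, we get $\mbf{u}^a \nabla_a \mbf{u}^b = 0$, and Lemma \ref{lem-wt-null-geod} gives that $\gamma$ is an unparametrised null geodesic. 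The main obstacle — really the only non-routine point — is verifying that the null condition on $\mbf{u}$ is exactly what is needed to kill the $\mbf{g}_{ab}$-term; in the projective case \eqref{magic} no such term appears because the projective tractor derivatives \eqref{trids} contain no analogue of the $-\mbf{g}_{ab} Y^A$ piece. So I would flag that the nullity hypothesis built into $\mbf{u}$ is precisely what restores the clean analogy with the projective computation.
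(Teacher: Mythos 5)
Your forward direction is fine, but the converse has a genuine gap, and it comes from discarding exactly the term that does the work. The hypothesis of Proposition \ref{nullprop} is only that there is a non-vanishing $\mbf{u}^b\in\Gamma(T\gamma[-2])$ with $\Sigma^{AB}=\bX^{AB}_b\mbf{u}^b$ parallel; nullity of $\mbf{u}$ (equivalently of $\gamma$) is \emph{not} assumed --- it is part of what must be proved, since the conclusion is that $\gamma$ is a null geodesic. Your computation of $\mbf{u}^a\nabla_a\Sigma^{AB}$ is correct up to the point where you invoke ``since $\mbf{u}$ is a null vector field'' to kill the term $-2X^{[A}Y^{B]}\mbf{g}_{ab}\mbf{u}^a\mbf{u}^b$; in the converse direction that is circular, because you then also feed the conclusion into Lemma \ref{lem-wt-null-geod}, which itself requires $\mbf{u}$ to be null. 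As written, your argument only proves the weaker statement in which nullity of $\mbf{u}$ is added as an extra hypothesis on the right-hand side of the equivalence.

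The repair is to keep the full identity, which is precisely what the paper records as \eqref{cmagic}:
\begin{equation*}
  \mbf{u}^a\nabla_a \Sigma^{AB} \;=\; \bX^{AB}_b\,\mbf{u}^a\nabla_a\mbf{u}^b \;-\;2X^{[A}Y^{B]}\,\mbf{g}_{ab}\mbf{u}^a\mbf{u}^b .
\end{equation*}
At each point the first term lies in the span of the $X^{[A}Z^{B]}{}_b$ while the second is a multiple of $X^{[A}Y^{B]}$, and these are linearly independent. Hence parallelism of $\Sigma$ forces \emph{both} terms to vanish separately: the second gives $\mbf{g}_{ab}\mbf{u}^a\mbf{u}^b=0$, i.e.\ the curve is null, and the first (using injectivity of $\bX^{AB}_b$, as you correctly note) gives $\mbf{u}^a\nabla_a\mbf{u}^b=0$. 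Only at that point may Lemma \ref{lem-wt-null-geod} be applied. With this one adjustment your argument coincides with the paper's proof.
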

\begin{proof}
  It follows from the identities \eqref{ctrids} that
  \begin{equation}\label{cmagic}
    \mbf{u}^a\nabla_a \Sigma^{AB} =\bX^{AB}_b \mbf{u}^a\nabla_a\mbf{u}^b
    -2X^{[A}Y^{B]}\mbf{g}_{ab}\mbf{u}^a\mbf{u}^b .
  \end{equation}
  At each point the terms on the right hand side are linearly
  independent if non-zero.
\end{proof}

We are now in position to prove Theorem \ref{main-nullc} in analogy to the projective case.

\noindent{\em Proof of Theorem \ref{main-nullc}.}
The argument follows Proof of Theorem \ref{main-p}, but now uses
\eqref{cmagic} and the conformal tractor calculus.
\hfill $\Box$

It follows immediately from the proof that any $2$-tractor $\Sigma$ satisfying \eqref{main-nullc-eqn} must be of the form \eqref{eq-geod-bitract-null}, in particular, simple and totally null.

\subsection{The model and incidence}\label{nmodel}

\newcommand{\cQ}{\mathcal{Q}}
To obtain the standard homogeneous model for oriented conformal
structures of signature $(p,q)$ we begin with $\mathbb{R}^{n+2}$,
equipped with a fixed symmetric non-degenerate bilinear form $h$, of
signature $(p+1,q+1)$.
In the case of definite signature, the model is discussed in detail in \cite{Curry-G-conformal} and \cite{GRiemSig}.
We then form
$S^{n+1}:=\mathbb{P}_+(\mathbb{R}^{n+2})$ (cf.\ Section \ref{model}) and
consider the quadric $\cQ:=\mathbb{P}_+\mathbb{N}$, where
$\mathbb{N}:=\{X\in\mathbb{R}^{n+2} \mid h(X,X) =0\}$ is the null quadric in $\mathbb{R}^{n+2}$.
As a smooth manifold this is a sphere product
$S^p \times S^q$ smoothly embedded as codimension 1 submanifold in
$\mathbb{P}_+(\mathbb{R}^{n+2})$.

There is the projective standard tractor bundle $\cT$ on
$\mathbb{P}_+(\mathbb{R}^{n+2})$ as constructed in Section \ref{model}
(see especially \eqref{pTc}) and the conformal standard  tractor bundle
$\cT_c$ is simply the pull back of $\cT$ along the embedding $i:\cQ\to
  \mathbb{P}_+(\mathbb{R}^{n+2})$. Similarly, in this setting the
conformal tractor connection $\nabla^{\cT_c}$ is simply the
restriction of the projective tractor connection, and so arises from the
parallel transport of the standard affine structure on the vector
space $\mathbb{R}^{n+2}$. The canonical tractor
$X\in \cT_c[1]$ is just the pullback to $\cQ$ of the conformal
canonical tractor, that is, it is the tautological section of $\cT_c[1]$
determined at each point $x\in \cQ$ by the homogeneous coordinates for
$x$. The conformal tractor metric then arises in the obvious way from
the symmetric bilinear form $h$. We subsequently drop the subscript
and denote the conformal standard tractor bundle by $\cT$.

Arguing as in the projective case, it now follows immediately
from these facts and Theorem \ref{main-nullc} that on $\cQ$, each unparametrised null geodesic is the curve of points lying in the span of a totally null 2-plane
through the origin. So Theorem \ref{main-nullc} generalises this
incidence relation to a general characterisation of null geodesic traces.

The double fibration picture is very similar to the one
presented in Section \ref{model}. Here, the space of all oriented null geodesics in $\mc{Q}$ is the $(2n-3)$-dimensional ray projectivisation $\mbb{P}_+ \T$ of
\begin{align*}
  \T := \left\{ \Sigma^{{A} {B}} \in  \Lambda^2 \R^{n+2} : \Sigma^{[{A} {B}} \Sigma^{{C}] {D}} = 0 \, , \Sigma^A {}_{C} \Sigma^C {}_{B} = 0 \right\} \, .
\end{align*}
The space $\mbb{P}_+ \T$ consists of all oriented linear $1$-dimensional subspaces contained in $\mc{Q}$.

The correspondence space between $\mc{Q}$ and $\mbb{P}_+ \T$ is the $(2n-2)$-dimensional submanifold of $\mc{Q} \times \mbb{P}_+ \T$ defined by the incidence relation $X \wedge\Sigma = 0$, for $[X] \in \mc{Q}$, $[ \Sigma ] \in \mbb{P}_+ \T$. As a bundle over $\mc{Q}$, it is the ray projectivisation $\mbb{P}_+ \mc{C}$ of the bundle $\mc{C}$ of null cones over $\mc{Q}$: a fibre of $\mbb{P}_+ \mc{C}$ over a point $x$ of $\mc{Q}$ consists of all oriented null directions through $x$.
Again, the correspondence space yields the double fibration:
\begin{align*}
  \xymatrix{ & \mbb{P}_+ \mc{C} \ar[dl] \ar[dr] &                \\
  \mc{Q}     &                                  & \mbb{P}_+ \T }
\end{align*}
Analogously to the projective case, the tangent directions of a curve $\gamma$ in $\mc{Q} \cong S^n$ canonically determine a lift of $\gamma$ to $\mbb{P}_+ \mc{C}$, which descends to a point in $\mbb{P}_+ \T$ if and only if $\gamma$ is a null geodesic.

\begin{rem}
  These spaces are generalized flag manifolds and admit a description as homogeneous spaces with automorphism group $G = \SO(p+1,q+1)$.
\end{rem}

\begin{rem}
  Note that replacing $\mbb{P}_+ \mc{C}$ and $\mbb{P}_+ \T$ by $\mbb{P} \mc{C}$ and $\mbb{P} \T$ respectively yields a double fibration for unoriented null geodesics.
\end{rem}

\subsubsection{Initial conditions for null geodesics}\label{sect-IC-n-geod}
The discussion above allows us to treat the problem of initial conditions for null geodesics on a conformal manifold in the same way as in Section \ref{sect-IC-geod}. We leave the details to the reader. One obtains the following theorem:

\begin{thm}\label{thm-IC-n-geod}
  Let $(M,\bc)$ be a $n$-dimensional conformal manifold of indefinite signature. Fix a point $\mr{x}$ in $M$ so that $X_{\mr{x}}$ is the canonical tractor based at $\mr{x}$. Then, for every totally-null non-zero element $\mr{\Sigma}$ of $\Lambda^2 \mc{T}_{\mr{x}}$ satisfying
  \begin{align}\label{eq-cn-IC}
    X_{\mr{x}}\wedge \mr{\Sigma} =0 \, ,
  \end{align}
  there locally exists a unique unparametrised null geodesic $\gamma$ through $\mr{x}$.  Further, the tractor $\Sigma$ associated to $\gamma$ (via Theorem \ref{main-nullc}) satisfies
  $\Sigma_{\mr{x}}= \lambda \mr{\Sigma}$ for some constant $\lambda>0$. Any two totally null elements of $\Lambda^2 \mc{T}_{\mr{x}}$ that satisfy \eqref{eq-cn-IC} give rise to the same oriented geodesic through $\mr{x}$ if and only if they differ by a positive constant multiple.
\end{thm}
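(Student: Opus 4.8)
The plan is to mirror the structure of Theorem \ref{thm-IC-geod}, adapting each step to the null conformal setting and invoking the results already established for null geodesics. First I would observe that the hypotheses—$\mr{\Sigma}$ non-zero, totally null, and satisfying $X_{\mr{x}}\wedge \mr{\Sigma}=0$—are precisely the algebraic conditions needed to extract suitable initial data. The condition $X_{\mr{x}}\wedge \mr{\Sigma}=0$ tells us that $\mr{\Sigma}$ is simple with $X_{\mr{x}}$ in its span, so in a chosen metric splitting of $\cT_{\mr{x}}$ it can be written in the form $\mr{\Sigma}^{AB}=2\,\mr{u}^b X^{[A}Z^{B]}{}_b|_{\mr{x}}=\bX^{AB}_b\mr{u}^b$ for some vector $\mr{u}^b$ at $\mr{x}$, using the bundle map \eqref{injpc}. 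The additional assumption that $\mr{\Sigma}$ is totally null, via the nilpotency characterisation \eqref{nil-eq}, forces $\mbf{g}_{ab}\mr{u}^a\mr{u}^b=0$, so $\mr{u}^b$ is a null direction at $\mr{x}$.

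Next I would produce the geodesic itself. Since $\mr{u}^b$ is a non-zero null vector at $\mr{x}$, it serves as initial velocity for the (second-order) null geodesic equation $u^a\nabla_a u^b=0$ with respect to any chosen metric $g\in\bc$; standard ODE theory gives a unique local parametrised geodesic $\gamma$ through $\mr{x}$ with $\dot\gamma(0)=\mr{u}$, and since the initial velocity is null the whole geodesic is null. As an unparametrised curve this is conformally invariant by the discussion around \eqref{ngf}. By Lemma \ref{lem-wt-null-geod} and Proposition \ref{nullprop}, $\gamma$ carries a weighted velocity $\mbf{u}^b\in\Gamma(T\gamma[-2])$ and an associated parallel $2$-tractor $\Sigma^{AB}=\bX^{AB}_b\mbf{u}^b$; restricting to $\mr{x}$ and comparing with $\mr{\Sigma}$, both are simple totally-null $2$-tractors built from null vectors pointing in the same direction, so $\Sigma_{\mr{x}}=\lambda\mr{\Sigma}$ for some constant, with $\lambda>0$ once orientations are matched.

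Finally I would handle the uniqueness statement. If $\mr{\widetilde\Sigma}=c\,\mr{\Sigma}$ for $c>0$, then the extracted null vector scales as $\mr{\widetilde u}=c\,\mr{u}$, which determines the same oriented null geodesic, since rescaling the initial velocity by a positive constant only reparametrises the curve and preserves its orientation and trace. Conversely, two totally-null elements satisfying \eqref{eq-cn-IC} that yield the same oriented null geodesic must produce null initial velocities pointing in the same oriented direction at $\mr{x}$, hence differing by a positive factor; tracing this back through \eqref{injpc} shows the $2$-tractors differ by a positive constant multiple. I expect the main subtlety—rather than a genuine obstacle—to be the careful bookkeeping that the totally-null condition \eqref{nil-eq} is exactly equivalent to the nullity of $\mr{u}^b$, and that signs and positivity of the scaling constant are tracked correctly so that oriented null geodesics correspond bijectively to rays of totally-null tractors through $\mr{x}$.
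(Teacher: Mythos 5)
Your proposal is correct and follows essentially the same route as the paper, which simply notes that the argument of Theorem \ref{thm-IC-geod} carries over verbatim, the only new ingredient being that the totally-null condition (which does not follow from \eqref{eq-cn-IC}) is exactly what forces the extracted initial velocity $\mr{u}$ to be null. You have merely written out explicitly the details that the paper leaves to the reader.
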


\begin{proof}
  The proof proceeds in the same way as that of Theorem \ref{thm-IC-geod}. The only difference is the additional requirement that $\mr{\Sigma}$ is totally null, which does not follow from \eqref{eq-cn-IC}. This condition ensures that the vector $\mr{u}$ tangent at $\mr{x}$ is null.
\end{proof}

\subsection{Conformal circles}\label{Conf-circ-sect}

Next, we consider the conformally invariant generalisation of
nowhere-null geodesics known as \emph{conformal circles} or
\emph{conformal geodesics} (see \cite{Bailey1990a,BEG,Tod} and
references therein). We shall follow the definition of conformal
circles introduced in \cite{BEG}, which is expressed in the language
of tractor calculus.

\subsubsection{Nowhere-null curves}

On a Riemannian, pseudo-Riemannian, or conformal manifold, by a {\em
    nowhere-null} curve we mean a curve $\gamma$ that admits a velocity
field $u\in \Gamma(T\gamma)$ with $u^au_a=\mbf{g}(u,u)$ nowhere zero.
Let us first highlight a number of conformal properties of nowhere-null curves.

Note that such a velocity
field $u^a$ determines a scale $\si_u\in \Gamma(\mc{E}_+[1]|_\gamma)$ along the curve via the
definitions
\begin{equation}\label{usc}
  \sigma_u := \left\{ \begin{array}{ll}
    \sqrt{u^a u_a}  & \quad \mbox{if $u^a$ is spacelike,} \\
    \sqrt{-u^a u_a} & \quad \mbox{if $u^a$ is timelike.}
  \end{array}
  \right.
\end{equation}

\begin{lem}\label{lem-wt-unit-vel}
  Let $\gamma$ be an oriented nowhere-null curve. There exists a unique
  weighted vector field $\mbf{u}^a \in \Gamma(T \gamma[-1])$ along $\gamma$
  that is compatible with the orientation and  satisfies
  \begin{align}\label{eq-unit-wt-vel}
    \mbf{u}^a \mbf{u}_a & =
    \begin{cases}
      1 \, ,  & \mbox{if $\gamma$ is spacelike,} \\
      -1 \, , & \mbox{if $\gamma$ is timelike.}
    \end{cases}
  \end{align}
\end{lem}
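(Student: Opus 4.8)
The plan is to exhibit the weighted vector field $\mbf{u}^a$ explicitly and then argue uniqueness. First I would start from an arbitrary velocity field $u^a \in \Gamma(T\gamma)$ compatible with the orientation, which exists by hypothesis since $\gamma$ is oriented and nowhere-null. From this $u^a$ I would form the scale $\si_u \in \Gamma(\mc{E}_+[1]|_\gamma)$ given by \eqref{usc}; since $u$ is nowhere-null, $\si_u$ is a strictly positive density of weight $1$, hence genuinely a section of $\mc{E}_+[1]|_\gamma$. The natural candidate is then
\begin{align*}
  \mbf{u}^a := \si_u^{-1} u^a \, ,
\end{align*}
which lies in $\Gamma(T\gamma[-1])$ because $u^a \in \Gamma(T\gamma)$ has weight $0$ and $\si_u^{-1}$ has weight $-1$. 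This $\mbf{u}^a$ is manifestly compatible with the orientation, being a positive multiple of $u^a$.

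Next I would verify the normalisation \eqref{eq-unit-wt-vel}. Using the conformal metric $\mbf{g}$ to pair indices, I compute $\mbf{u}^a \mbf{u}_a = \si_u^{-2} u^a u_a = \si_u^{-2} \mbf{g}(u,u)$. In the spacelike case $\mbf{g}(u,u) = u^a u_a > 0$, so $\si_u^2 = u^a u_a$ by definition and the ratio is $+1$; in the timelike case $\mbf{g}(u,u) < 0$, so $\si_u^2 = -u^a u_a$ and the ratio is $-1$. This gives exactly the required sign convention. I would also note that the definition is intrinsic to the conformal structure: because the density scale $\si_u$ is built from the same $\mbf{g}$-pairing, the construction is independent of any choice of metric $g \in \bc$, which is what makes $\mbf{u}^a$ a well-defined conformally invariant object.

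For uniqueness, suppose $\mbf{v}^a \in \Gamma(T\gamma[-1])$ is any orientation-compatible field satisfying \eqref{eq-unit-wt-vel}. Since both $\mbf{u}^a$ and $\mbf{v}^a$ are sections of the rank-one bundle $T\gamma[-1]$, we may write $\mbf{v}^a = \lambda \, \mbf{u}^a$ for some nowhere-zero function $\lambda$ along $\gamma$. Orientation-compatibility of both forces $\lambda > 0$. Contracting with $\mbf{g}$ gives $\mbf{v}^a \mbf{v}_a = \lambda^2 \, \mbf{u}^a \mbf{u}_a$, and since the two sides carry the same sign ($+1$ or $-1$ according to type), we get $\lambda^2 = 1$, hence $\lambda = 1$. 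Thus $\mbf{v}^a = \mbf{u}^a$, establishing uniqueness.

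The argument is essentially routine bookkeeping with density weights, so I do not anticipate a genuine obstacle. The one point requiring slight care is confirming that the sign choices in \eqref{usc} are matched correctly to the spacelike/timelike cases in \eqref{eq-unit-wt-vel}, and that $\si_u$ really is a positive density of the correct weight $1$ so that $\mbf{u}^a$ lands in $T\gamma[-1]$ rather than some other weighted bundle; tracking the weight arithmetic ($0 - 1 = -1$) and the positivity of $\si_u$ is where I would be most deliberate.
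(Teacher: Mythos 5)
Your proposal is correct and follows essentially the same route as the paper: take any orientation-compatible velocity $u^a$, form the scale $\sigma_u$ from \eqref{usc}, and set $\mbf{u}^a := \sigma_u^{-1} u^a$. The extra detail you supply (the weight bookkeeping, the sign check, and the rank-one scaling argument for uniqueness) is exactly what the paper leaves implicit in its remark that the construction is independent of the choice of $u^a$.
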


\begin{proof}
  Given an oriented curve $\gamma$, let $u^a$ be a nowhere zero velocity
  vector field compatible with the orientation, and define $\sigma_u$ by \eqref{usc}.
  Then $\mbf{u}^a
    := \sigma_u^{-1} u^a$ does not depend on the choice of velocity vector
  $u^a$, and satisfies the properties given by \eqref{eq-unit-wt-vel}.
\end{proof}

\begin{defn}
  Let $\gamma$ be an oriented nowhere-null  curve on $(M,\bc)$. We
  call the canonical weighted vector field $\mbf{u}^a$ given in Lemma
  \ref{lem-wt-unit-vel}, the \emph{weighted velocity} of
  $\gamma$. Given $g\in \bc$, we define the \emph{weighted acceleration} of
  $\gamma$ to be
  \begin{align}\label{eq-wt-acc}
    \mbf{a}^b & := \mbf{u}^c \nabla_c \mbf{u}^b \in \Gamma(\ce^b[-2]|_\gamma) \, .
  \end{align}
\end{defn}
It is clear that
\begin{align}\label{eq-rel-wt-vel-acc}
  \mbf{u}^b \mbf{a}_b = 0 \, .
\end{align}
Furthermore if $\wh{g}=e^{2\phi}g$ then  we have
$\widehat{\mbf{a}}^b = \mbf{a}^b + \mbf{u}^a \Upsilon_a \mbf{u}^b \mp
  \Upsilon^b$ whenever  $\mbf{u}^a\mbf{u}_a=\pm 1$.

Note that given an oriented curve $\gamma$, any scale $\si\in \Gamma(\ce_+[1]|_\gamma)$ along the curve
determines a particular velocity vector
$u^a := \sigma \mbf{u}^a$ with $\sigma^2 = |u^a u_a|$, and thus a
parametrisation of $\gamma$ (up to additive constant). We note further
the relation between the weighted acceleration and the acceleration
vector $a^b := u^c \nabla_c u^b$ resulting from such a choice:
\begin{align}\label{eq-wt-uwt-acc}
  a^c & = \sigma^2 \mbf{a}^c + \sigma \left( \mbf{u}^b \nabla_b \sigma \right) \mbf{u}^c \, , & \mbf{a}^c & = \sigma^{-2} a^c - \sigma^{-3} \left( u^b \nabla_b \sigma \right) u^c \, .
\end{align}

\subsubsection{The conformal circle equations}\label{ccirc}

All curves locally admit  {\em projective parametrisation} as
defined in \cite{Bailey1990a,BEG}.  A parametrised curve $\gamma$ is a
projectively parametrised conformal circle if and only if it satisfies
the equation \eqref{par-conf-circ}.

We are interested in parametrisation independent descriptions of
conformal circles. As a step toward this we use the following easily verified result from  \cite{Bailey1990a}.
\begin{prop}
  Let $\gamma$ be any curve on $(M,\bc)$ with velocity
  vector field $u^a\in\Gamma(T\gamma)$ and acceleration $a^b := u^c \nabla_c u^b$.  The equations
  \begin{align}\label{eq-conf-circ}
    \left( u^b \nabla_b a^{[a} \right) u^{b]} & =  3 \frac{u \cdot a}{u \cdot u} a^{[a} u^{b]} + ( u \cdot u ) u^c \Rho_c{}^{[a} u^{b]}
  \end{align}
  are parametrisation independent. Moreover the  equation is satisfied if and only if there is a reparametrisation that obeys the conformal circle equation \eqref{par-conf-circ}.

\end{prop}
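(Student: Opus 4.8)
The plan is to read \eqref{eq-conf-circ} as the component of the conformal circle equation \eqref{par-conf-circ} transverse to the velocity. Write the left-hand side of \eqref{par-conf-circ} as the vector field
\[
  B^c := u^b \nabla_b a^c - 3 \frac{u \cdot a}{u \cdot u} a^c + \frac{3\, a \cdot a}{2\, u \cdot u} u^c - ( u \cdot u ) u^b \Rho_b{}^c + 2\, \Rho_{ab} u^a u^b u^c
\]
along $\gamma$. Skew-symmetrising its free index against a second copy of $u$ annihilates both terms carrying an explicit $u^c$, since $u^{[c} u^{e]} = 0$; the surviving identity $B^{[c} u^{e]} = 0$ is exactly \eqref{eq-conf-circ}. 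Thus \eqref{par-conf-circ} trivially implies \eqref{eq-conf-circ} in the same parametrisation, and the two assertions to prove are: (i) the transverse equation $B^{[c} u^{e]} = 0$ is parametrisation independent; and (ii) conversely, whenever it holds one can reparametrise so that the full equation $B^c = 0$ holds.

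For (i) I would compute directly under a reparametrisation $\widetilde{u}^a = \lambda u^a$ with $\lambda > 0$ a function along $\gamma$; write $\dot\lambda := u^b \nabla_b \lambda$ and $\ddot\lambda := u^b \nabla_b \dot\lambda$. A short Leibniz calculation gives $\widetilde{a}^c = \lambda^2 a^c + \lambda \dot\lambda\, u^c$ and then
\[
  \widetilde{u}^b \nabla_b \widetilde{a}^c = \lambda^3\, u^b \nabla_b a^c + 3 \lambda^2 \dot\lambda\, a^c + \big( \lambda \dot\lambda^2 + \lambda^2 \ddot\lambda \big) u^c .
\]
Wedging with $\widetilde{u}^{e]} = \lambda u^{e]}$ deletes every term proportional to $u^c$, leaving $(\widetilde{u}^b \nabla_b \widetilde{a}^{[c}) \widetilde{u}^{e]} = \lambda^4 (u^b \nabla_b a^{[c}) u^{e]} + 3 \lambda^3 \dot\lambda\, a^{[c} u^{e]}$. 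The first right-hand term of \eqref{eq-conf-circ} transforms the same way, producing $3 \lambda^4 \tfrac{u \cdot a}{u \cdot u} a^{[c} u^{e]} + 3 \lambda^3 \dot\lambda\, a^{[c} u^{e]}$, while the Schouten term scales homogeneously as $\lambda^4 ( u \cdot u ) u^d \Rho_d{}^{[c} u^{e]}$. The spurious first-order terms $3 \lambda^3 \dot\lambda\, a^{[c} u^{e]}$ on the two sides cancel, so \eqref{eq-conf-circ} in the new parametrisation is exactly $\lambda^4$ times \eqref{eq-conf-circ} in the old one, proving invariance. (Equivalently, one may rewrite every term of \eqref{eq-conf-circ} through the parametrisation-independent weighted velocity $\mbf{u}$ and acceleration $\mbf{a}$ of Lemma \ref{lem-wt-unit-vel}: by \eqref{eq-wt-uwt-acc} one has $a^{[c} u^{e]} = \sigma^3\, \mbf{a}^{[c} \mbf{u}^{e]}$, and similarly for the remaining terms, making the invariance manifest.)

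For (ii), the forward direction is immediate from the observation above together with (i): if some parametrisation satisfies \eqref{par-conf-circ} it satisfies \eqref{eq-conf-circ}, and then so does every parametrisation. Conversely, suppose \eqref{eq-conf-circ} holds, i.e.\ $B^{[c} u^{e]} = 0$. Since $\gamma$ is nowhere null this forces $B^c = \nu\, u^c$ for a unique scalar $\nu$ along $\gamma$; that is, the transverse part of $B^c$ vanishes, and by (i) it continues to vanish in every parametrisation. Hence under $\widetilde{u}^a = \lambda u^a$ the transverse part of $\widetilde{B}^c$ is still zero, so $\widetilde{B}^c = \rho(\lambda)\, u^c$ for a scalar $\rho(\lambda)$, and \eqref{par-conf-circ} holds in the new parametrisation precisely when $\rho(\lambda) = 0$. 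Reading off the $u^c$-coefficient --- using the $\lambda^2 \ddot\lambda\, u^c$ contribution from the jerk computed above, which is the only source of $\ddot\lambda$ --- shows this is a second-order ODE for $\lambda$ with nonvanishing leading coefficient, hence locally solvable for $\lambda > 0$; any solution furnishes a projective parametrisation in which \eqref{par-conf-circ} is satisfied.

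The main obstacle is the bookkeeping in (i): the third-order jerk term $u^b \nabla_b a^c$ generates both a first-order term $3\lambda^2\dot\lambda\, a^c$ and a second-order term $\lambda^2\ddot\lambda\, u^c$ under reparametrisation, and one must verify that, after wedging with $u$, the first-order contribution cancels exactly against the one coming from the $3\tfrac{u\cdot a}{u\cdot u} a^c$ term, leaving a clean overall factor $\lambda^4$. Once this cancellation is confirmed, the parallel-component ODE in (ii) and its solvability follow with essentially no further computation.
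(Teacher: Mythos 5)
Your proof is correct, and the bookkeeping in the reparametrisation computation checks out: $\widetilde{a}^c = \lambda^2 a^c + \lambda\dot\lambda\,u^c$, the jerk picks up exactly the terms $3\lambda^2\dot\lambda\,a^c + (\lambda\dot\lambda^2+\lambda^2\ddot\lambda)u^c$, and after wedging with $u$ the spurious $3\lambda^3\dot\lambda\,a^{[c}u^{e]}$ contributions cancel between the two sides, leaving an overall factor of $\lambda^4$. The paper itself offers no proof of this Proposition --- it is quoted from \cite{Bailey1990a} as ``easily verified'' --- so there is nothing to compare against step by step; your direct computation is the standard verification and your part (ii), reducing the parallel component to a second-order ODE for $\lambda$ with leading term $\lambda^2\ddot\lambda$, is exactly how the existence of a projective parametrisation is obtained (compare \eqref{eq-conf-circ-par} and the prolonged system in Section \ref{param}). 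One small remark: in the converse of (ii) the deduction $B^{[c}u^{e]}=0 \Rightarrow B^c=\nu u^c$ needs only $u^c\neq 0$, not the nowhere-null hypothesis; that hypothesis is used earlier, to make sense of the divisions by $u\cdot u$. Your parenthetical alternative --- rewriting everything through the weighted velocity and acceleration so that invariance is manifest --- is essentially the route the paper takes in the Lemma immediately following, where \eqref{eq-conf-circ} is shown equivalent to the scale-independent equation \eqref{eq-proj-inv-wt}.
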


The analogue of Lemmata \ref{lem-wt-proj-geod} and
\ref{lem-wt-null-geod} is given by:
\begin{lem}
  Let $\gamma$ be an oriented nowhere-null curve on $(M,\bc)$. Then $\gamma$ is a conformal circle  if and only if its weighted  velocity $\mbf{u}^a$   and acceleration $\mbf{a}^a$ satisfy the conformally invariant equation
  \begin{align}\label{eq-proj-inv-wt}
    \left( \mbf{u}^c \nabla_c \mbf{a}^{[a} \right) \mbf{u}^{b]} & = \pm \mbf{u}^c \Rho_c{}^{[a} \mbf{u}^{b]} \, , & \mbox{whenever $\mbf{u}^a\mbf{u}_a=\pm 1$,}
  \end{align}
  or equivalently,
  \begin{align}\label{eq-proj-inv-wt2}
    \mbf{u}^b \nabla_b \mbf{a}^a & = \pm \mbf{u}^b \Rho_b {}^a  - ( \Rho_{bc} \mbf{u}^b \mbf{u}^c \pm \mbf{a} \cdot \mbf{a} ) \mbf{u}^a \, , & \mbox{whenever $\mbf{u}^a\mbf{u}_a=\pm 1$,}
  \end{align}
  for any $g \in \bc$ with Levi-Civita connection $\nabla$.
\end{lem}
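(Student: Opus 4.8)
The plan is to deduce the lemma from the preceding Proposition, which already establishes that $\gamma$ is a conformal circle if and only if the parametrisation-independent equation \eqref{eq-conf-circ} holds for some (equivalently any) velocity field, together with the relation \eqref{eq-wt-uwt-acc} between the acceleration $a^c$ of an arbitrary parametrisation and the weighted acceleration $\mbf{a}^c$. First I would fix a metric $g \in \bc$ with Levi-Civita connection $\nabla$ and a positive scale $\si \in \Gamma(\ce_+[1]|_\gamma)$, so that $u^a := \si\, \mbf{u}^a$ is an honest velocity field along $\gamma$ with $u^a u_a = \pm\si^2$ and acceleration $a^c := u^d\nabla_d u^c$. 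Since \eqref{eq-conf-circ} is parametrisation independent, the whole task reduces to rewriting \eqref{eq-conf-circ} for this particular $u^a$ in terms of $\mbf{u}^a$ and $\mbf{a}^a$.

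The key computation substitutes $a^c = \si^2 \mbf{a}^c + \si f\, \mbf{u}^c$, with $f := \mbf{u}^d\nabla_d\si$, from \eqref{eq-wt-uwt-acc} into both sides of \eqref{eq-conf-circ}. Expanding $u^d\nabla_d a^c$ by the Leibniz rule and using $\mbf{u}^d\nabla_d\mbf{u}^c = \mbf{a}^c$ from \eqref{eq-wt-acc} produces a $\mbf{b}^c := \mbf{u}^d\nabla_d\mbf{a}^c$ term, an $\mbf{a}^c$ term proportional to $f$, and a pure $\mbf{u}^c$ term. Upon forming the left-hand side $\left(u^d\nabla_d a^{[a}\right)u^{b]}$, the $\mbf{u}^c$ contribution dies under the skew-symmetrisation. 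A short check, using $u\cdot a = \pm\si^2 f$, $u\cdot u = \pm\si^2$ so that $\tfrac{u\cdot a}{u\cdot u}=f$, and the orthogonality \eqref{eq-rel-wt-vel-acc} $\mbf{u}^b\mbf{a}_b = 0$, shows that the term $3\tfrac{u\cdot a}{u\cdot u}a^{[a}u^{b]}$ on the right of \eqref{eq-conf-circ} exactly matches the surviving $f$-dependent $\mbf{a}^{[a}\mbf{u}^{b]}$ term on the left, so all dependence on $\si$ and $f$ cancels; dividing by the overall factor $\si^4>0$ leaves precisely \eqref{eq-proj-inv-wt}. Choosing the scale with $f=0$ (solvable as an ODE along $\gamma$) shortens the bookkeeping but is not essential, since the $f$-terms cancel in any case.

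It then remains to pass between the skew form \eqref{eq-proj-inv-wt} and the explicit form \eqref{eq-proj-inv-wt2}. Writing $\mbf{w}^a := \mbf{b}^a \mp \mbf{u}^c\Rho_c{}^a$, equation \eqref{eq-proj-inv-wt} reads $\mbf{w}^{[a}\mbf{u}^{b]}=0$, i.e.\ $\mbf{w}^a = \lambda\, \mbf{u}^a$ for some function $\lambda$ along $\gamma$, since $\mbf{u}^a$ is nowhere null. Contracting with $\mbf{u}_a$ and using $\mbf{u}^a\mbf{u}_a = \pm1$ gives $\lambda = \pm\, \mbf{w}^a\mbf{u}_a$; differentiating the identity $\mbf{a}^a\mbf{u}_a=0$ along $\mbf{u}$ yields $\mbf{b}^a\mbf{u}_a = -\mbf{a}\cdot\mbf{a}$, whence $\lambda = \mp\,\mbf{a}\cdot\mbf{a} - \Rho_{bc}\mbf{u}^b\mbf{u}^c$, exactly the coefficient in \eqref{eq-proj-inv-wt2}. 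Conformal invariance of the equation is then automatic, as being a conformal circle is a conformal notion and we have shown \eqref{eq-proj-inv-wt} equivalent to it for each $g\in\bc$; alternatively one may verify invariance directly from the transformation law $\widehat{\mbf{a}}^b = \mbf{a}^b + \mbf{u}^a\Upsilon_a\mbf{u}^b \mp \Upsilon^b$. The only genuinely delicate point is the bookkeeping in the substitution step, where one must trust that the coefficient $3$ in \eqref{eq-conf-circ} is precisely what forces the $f$-dependent terms to cancel; everything else is routine.
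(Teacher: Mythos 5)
Your proposal is correct and follows essentially the same route as the paper: choose a scale $\si$, set $u^a=\si\,\mbf{u}^a$, substitute the relation \eqref{eq-wt-uwt-acc} into the parametrisation-independent equation \eqref{eq-conf-circ} and observe that the $\si$- and $f$-dependent terms cancel (the coefficient $3$ does exactly that), leaving \eqref{eq-proj-inv-wt}; the passage to \eqref{eq-proj-inv-wt2} via $\mbf{u}^b\mbf{a}_b=0$ is the same equivalence the paper invokes from \eqref{eq-rel-wt-vel-acc}, which you simply spell out in more detail.
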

\begin{proof}
  We first note that the equivalence of \eqref{eq-proj-inv-wt} and
  \eqref{eq-proj-inv-wt2} follows from \eqref{eq-rel-wt-vel-acc}. The
  rest of the lemma can be proved simply by choosing a density $\sigma
    \in \Gamma(\mc{E}_+[1]|_\gamma)$ along $\gamma$, setting $u^a := \sigma \mbf{u}^a$ so that $u^a$ is
  a vector field tangent to $\gamma$ with $u^a u_a = \pm \sigma^2$, using
  \eqref{eq-wt-uwt-acc}, and substituting \eqref{eq-proj-inv-wt} to
  get \eqref{eq-conf-circ}.
\end{proof}

\begin{rem} Equation \ref{eq-proj-inv-wt2} recovers Tod's \cite[Equations (11) and (14)]{Tod}, see also \cite[Equation
    (3)]{unique-MikE}. In a private communication, Michael Eastwood
  has informed us that in a forthcoming work he and Lenka Zalabov\'a
  provide a tractor derivation of the equation \eqref{eq-proj-inv-wt2}
  via Proposition 1 of the Doubrov-\v Z\'adn\'\i k article \cite{Doub-Zad}.

  Since the equation \nn{eq-proj-inv-wt2} is scale (equivalently,
  parametrisation) independent we can easily use it to deduce an equation
  for any choice of scale. For example, let $\sigma \in \mc{E}_+[1]$ be
  a scale determining a metric $g_{ab} := \sigma^{-2}
    \mbf{g}_{ab}$. If $\mbf{u}^a$ is the weighted velocity of a curve
  $\gamma$ with $\mbf{u}^a \mbf{u}_a = \pm 1$, then $u^a := \sigma
    \mbf{u}^a$ is unit in the sense that $u^a u^b g_{ab} = \pm 1$. If $\gamma$ is a conformal circle, then  we conclude at once that its acceleration
  vector field satisfies
  \begin{align}\label{eq-par-conf-circ-unit}
    u^b \nabla_b a^a & = \pm u^b \Rho_b{}^a - \left( \Rho_{bc} u^b u^c \pm (a \cdot a) \right) u^a \, .
  \end{align}
  This generalises to arbitrary signature the definition of conformal circles presented in \cite{Yano1938,Yano1952}.
\end{rem}

\subsubsection{The tractor formulation} \label{trac-form}
\begin{defn}
  Let $\gamma$ be an oriented nowhere-null curve on a conformal manifold $({M} ,
    \bc)$, with weighted velocity $\mbf{u}^a$.  Choose $\sigma \in \Gamma(\mc{E}_+[1]|_\gamma)$.
  The
  \emph{velocity tractor} and the \emph{acceleration tractor}
  associated to $\gamma$ and $\si$ are
  defined to be
  \begin{align*}
    U^{A} & := \sigma \mbf{u}^a \nabla_a \left( \sigma^{-1} X^{A} \right) \, , &
    A^{A} & := \sigma \mbf{u}^a \nabla_a U^{A} \, ,
  \end{align*}
  respectively.
\end{defn}
When written out explicitly, we have, for any $g \in \bc$ with Levi-Civita connection $\nabla$,
\begin{align*}
    & U^{A} = \left( - \sigma^{-1} \mbf{u}^a \nabla_a \sigma \right) X^{A} + \mbf{u}^a Z_a^{A} \, ,                                                                                                         \\
    & A^{A} = \left( \sigma^{-1} \left( \mbf{u}^a \nabla_a \sigma \right)^2 - \mbf{a}^a \nabla_a \sigma - \mbf{u}^a \mbf{u}^b \nabla_a \nabla_b \sigma - \mbf{u}^a \mbf{u}^b \Rho_{ab} \sigma \right) X^{A} \\
    & \qquad \qquad \qquad\qquad \qquad \qquad + \left( \mbf{a}^a \sigma - \left( \mbf{u}^b \nabla_b \sigma \right) \mbf{u}^a \right) Z_a^{A}+ (\mp \sigma) Y^{A} \, ,
\end{align*}
whenever $\mbf{u}^a \mbf{u}_a = \pm 1$, and with $\mbf{a}^b := \mbf{u}^a \nabla_a \mbf{u}^b$.

Note that both $U^{A}$ and $A^{A}$ depend on the choice of scale $\si$ along $\gamma$, and thus on a choice of parametrisation, see Section \ref{param} below. However, the $3$-tractor defined by
\begin{equation}\label{eq-SigmaXUA}
  \Sigma^{{A} {B} {C}} :=  \sigma^{-1} \,  6 \, X^{[{A}} U^{{B}} A^{{C}]}
\end{equation}
is independent of $\si$. We have the following result.
\begin{lem}\label{tri-lem}
  An unparametrised oriented nowhere-null curve $\gamma$ determines a canonical $3$-tractor $\Sigma\in \Gamma(\Lambda^3\cT|_\gamma)$ along it via  \eqref{eq-SigmaXUA}.
\end{lem}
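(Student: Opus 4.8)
The plan is to show that the $3$-tractor $\Sigma^{ABC}$ defined by \eqref{eq-SigmaXUA} is independent of the choice of scale $\sigma \in \Gamma(\mc{E}_+[1]|_\gamma)$ used in its construction, so that it becomes a well-defined invariant attached to the unparametrised oriented curve $\gamma$. The displayed formulae for $U^A$ and $A^A$ already show that each \emph{individually} depends on the choice of $\sigma$, and indeed on the parametrisation it induces, so the content of the lemma is that these dependencies cancel in the particular antisymmetrised combination $\sigma^{-1}\, 6\, X^{[A} U^B A^{C]}$. Since $\Sigma$ is manifestly a smooth section of $\Lambda^3 \mc{T}|_\gamma$ for each choice of $\sigma$, the canonical assertion reduces entirely to this scale-independence check.

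First I would fix a metric $g \in \bc$ with Levi-Civita connection $\nabla$ and examine how $U^A$ and $A^A$ transform under a rescaling of the chosen density. The cleanest route is to work with the weighted velocity $\mbf{u}^a$, which by Lemma \ref{lem-wt-unit-vel} is canonical (independent of $\sigma$), and to treat $\sigma$ as a free positive density along $\gamma$. Replacing $\sigma$ by $\tilde\sigma = e^{f}\sigma$ for a smooth function $f$ along $\gamma$ changes the induced velocity vector $u^a = \sigma \mbf{u}^a$ and hence the parametrisation; I would compute the resulting $\tilde U^A$ and $\tilde A^A$ from the definitions $\tilde U^A = \tilde\sigma \mbf{u}^a \nabla_a(\tilde\sigma^{-1} X^A)$ and $\tilde A^A = \tilde\sigma \mbf{u}^a \nabla_a \tilde U^A$. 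Using the Leibniz rule and the fact that $\nabla_a X^A = Z^A{}_a$ from \eqref{ctrids}, one finds $\tilde U^A = U^A - (\mbf{u}^a\nabla_a f) X^A$, i.e. $U$ shifts only by a multiple of the canonical tractor $X$. Differentiating once more along $\tilde\sigma \mbf{u}^a \nabla_a$ gives $\tilde A^A = A^A + (\text{terms lying in the span of } X^A \text{ and } U^A)$; the key structural point is that the correction terms to both $U$ and $A$ live in $\mathrm{span}\{X, U\}$.

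The heart of the argument is then purely algebraic: because the corrections to $\tilde U$ and $\tilde A$ are linear combinations of $X$ and $U$ (and, for $\tilde A$, also of $X$), the total skew-symmetrisation $X^{[A} \tilde U^B \tilde A^{C]}$ kills every correction term, since any wedge with a repeated factor of $X$ or of $U$ vanishes. Concretely, writing $\tilde U = U + p\, X$ and $\tilde A = A + q\, U + r\, X$ for scalar functions $p, q, r$ along $\gamma$, one has $X \wedge \tilde U \wedge \tilde A = X \wedge (U + pX) \wedge (A + qU + rX) = X \wedge U \wedge A$, since $X\wedge X = 0$ and $X\wedge U \wedge U = 0$. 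Finally the overall scalar prefactor must be accounted for: the definition carries a factor $\sigma^{-1}$, and one checks that the weight bookkeeping together with the way $\mbf{u}^a$ is normalised makes $\sigma^{-1} X \wedge U \wedge A$ invariant under $\sigma \mapsto \tilde\sigma$, so that $\tilde\Sigma = \Sigma$.

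I expect the main obstacle to be the bookkeeping in the second derivative: verifying that the correction to $\tilde A^A$ really does lie in $\mathrm{span}\{X, U\}$ requires carefully tracking the terms produced by $\nabla_a$ acting on the $X^A$-coefficient of $U^A$ and on $\mbf{u}^a$, and confirming that the genuinely new tractor directions (the $Z^A{}_a$ and $Y^A$ components, controlled by \eqref{ctrids}) are unchanged by the rescaling while all scale-dependent pieces are proportional to $X$ or reproduce a multiple of $U$. Once this is established the wedge-product cancellation is immediate, and the conformal invariance of the whole expression follows because both the wedge $X \wedge U \wedge A$ (built from conformally natural operations) and the density weights combine to the correct total weight zero after the $\sigma^{-1}$ factor is included. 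This confirms that $\Sigma$ depends only on the unparametrised oriented curve $\gamma$, completing the proof.
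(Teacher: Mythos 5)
Your proof is correct in outline but takes a genuinely different route from the paper. The paper's proof is a direct computation: it picks an arbitrary background scale to split the tractor bundle, expands \eqref{eq-SigmaXUA}, and observes that the result, formula \eqref{cSigma}, is expressed purely in terms of the canonical weighted velocity $\mbf{u}^a$ and weighted acceleration $\mbf{a}^a$, so that all dependence on $\si$ has visibly dropped out. You instead argue structurally: under a rescaling $\sigma\mapsto\tilde\sigma=e^f\sigma$ the corrections to $U^A$ and $A^A$ lie in $\mathrm{span}\{X\}$ and $\mathrm{span}\{X,U\}$ respectively, so the totally skew product $X^{[A}U^BA^{C]}$ is insensitive to them. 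Your route explains \emph{why} the cancellation happens and avoids any splitting computation; the paper's route is less illuminating but produces the explicit formula \eqref{cSigma}, which is then reused in the proofs of Proposition \ref{prop-conf-circ-par} and Lemma \ref{llem}, so its computation is not wasted effort.

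One piece of your bookkeeping is internally inconsistent as written, though the conclusion survives. Since $\tilde A^A=\tilde\sigma\,\mbf{u}^a\nabla_a\tilde U^A$ and $\tilde\sigma=e^f\sigma$, the correct transformation is $\tilde A^A=e^f\bigl(A^A+q\,U^A+r\,X^A\bigr)$, not $\tilde A^A=A^A+(\cdots)$: the derivative operator itself carries the factor $e^f$. Consequently $X\wedge\tilde U\wedge\tilde A=e^f\,X\wedge U\wedge A$, and it is precisely this $e^f$ that is cancelled by the prefactor $\tilde\sigma^{-1}=e^{-f}\sigma^{-1}$ in \eqref{eq-SigmaXUA}. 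If, as you wrote, the wedge were exactly unchanged, the prefactor would give $\tilde\Sigma=e^{-f}\Sigma$ and invariance would fail. So the mechanism is a cancellation between the $e^f$ produced in the wedge and the $e^{-f}$ in the prefactor, not two separate invariances; with that correction your argument goes through.
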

\begin{proof}
  Along $\gamma$ choose a scale
  $\si\in \Gamma(\ce_+[1]|_\gamma)$. Then by the conformal invariance of the tractor connection it follows that the tractors
  $\sigma^{-1} X^{A}$, $U^{A}$ and $A^{A}$ depend only on $\gamma$ and $\si$. Thus $\Sigma$ as in \eqref{eq-SigmaXUA} can depend only on $\gamma$ and $\si$.

  To facilitate calculation we pick some other background scale
  to split the tractor bundles. Then
  $\Sigma$, as defined by \eqref{eq-SigmaXUA}, is given by
  \begin{align} \label{cSigma}
    \Sigma^{{A} {B} {C}} & =  \pm  6 \, \mbf{u}^c \, X^{[{A}} Y^{{B}} Z^{{C}]}{}_c + 6 \, \mbf{u}^b \mbf{a}^c \, X^{[{A}}  Z^{{B}}{}_b Z^{{C}]}{}_c \, , & \mbox{whenever $\mbf{u}^a \mbf{u}_a = \pm 1$,}
  \end{align}
  where we note that there is no dependency on $\si$.
\end{proof}

\begin{prop}\label{prop-conf-circ-par}
  Let $\gamma$ be a nowhere-null oriented curve on $({M}, \bc)$ with
  associated $3$-tractor $\Sigma^{{A}{B}{C}}$ as defined by \nn{eq-SigmaXUA}. Then $\gamma$ is
  a conformal circle if and only if $\Sigma^{{A}{B}{C}}$ is
  constant along $\gamma$.
\end{prop}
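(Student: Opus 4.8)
The plan is to recognise $\Sigma$ as a tractor ``Wronskian'' of the canonical tractor and to differentiate it directly along $\gamma$; here ``constant along $\gamma$'' means parallel, $\mbf{u}^c \nabla_c \Sigma^{ABC} = 0$. Fix a scale $\sigma \in \Gamma(\mc{E}_+[1]|_\gamma)$ and write $D := \sigma\, \mbf{u}^a \nabla_a$, so that by the definitions preceding \eqref{eq-SigmaXUA} one has $U^A = D(\sigma^{-1} X^A)$ and $A^A = D U^A$. Then \eqref{eq-SigmaXUA} says exactly that $\Sigma^{ABC} = 6\, \sigma^{-1} X^{[A} U^B A^{C]}$ is the skew triple product of $\sigma^{-1}X$, its first $D$-derivative, and its second $D$-derivative. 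Since $\sigma > 0$, parallelism of $\Sigma$ is equivalent to $D\Sigma = 0$.

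First I would differentiate, using that $D$ obeys the Leibniz rule and commutes with skew-symmetrisation. This produces three skew triple products, of which the first two contain a repeated factor ($D(\sigma^{-1}X) = U$ appearing twice, and $DU = A$ appearing twice) and so vanish, leaving
\[
  D\Sigma^{ABC} = 6\, \sigma^{-1} X^{[A}\, U^B\, (DA)^{C]} \, .
\]
Because $\gamma$ is nowhere null, $\sigma^{-1}X$ and $U$ are linearly independent at each point of $\gamma$ (the $Z$-slot of $U$ is $\mbf{u}^a \neq 0$), so $D\Sigma = 0$ if and only if $DA$ lies pointwise in their span; equivalently, the component of $DA$ transverse to $\mathrm{span}\{\sigma^{-1}X, U\}$ must vanish.

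To evaluate this transversality condition concretely I would use that $\Sigma$ is independent of the chosen scale (Lemma \ref{tri-lem}) and take $\sigma$ to be the splitting scale itself, so that $\nabla_a \sigma = 0$. The explicit expressions for $U^A$ and $A^A$ then collapse to $U^A = \mbf{u}^a Z^A{}_a$ and $A^A = -(\Rho_{ab}\mbf{u}^a\mbf{u}^b)\sigma X^A + \sigma\, \mbf{a}^a Z^A{}_a \mp \sigma Y^A$, whenever $\mbf{u}^a\mbf{u}_a = \pm 1$. Applying the tractor identities \eqref{ctrids} to $DA = \sigma\, \mbf{u}^c \nabla_c A^A$ then shows that the $Y$-slot of $DA$ vanishes identically — here one uses $\mbf{u}^a \mbf{a}_a = 0$ from \eqref{eq-rel-wt-vel-acc} to annihilate the $Y$-contribution of $\nabla_c Z^A{}_a = -\Rho_{ca}X^A - \mbf{g}_{ca}Y^A$ — while the $Z$-slot of $DA$ equals $\mbf{u}^c\nabla_c\mbf{a}^a \mp \Rho_c{}^a\mbf{u}^c$ up to a multiple of $\mbf{u}^a$. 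Since $\mathrm{span}\{\sigma^{-1}X, U\}$ is precisely the $X$-slot together with the $\mbf{u}^a$-direction of the $Z$-slot, the transverse part of $DA$ is exactly the component of $\mbf{u}^c\nabla_c\mbf{a}^a \mp \Rho_c{}^a\mbf{u}^c$ orthogonal to $\mbf{u}^a$. Hence $D\Sigma = 0$ if and only if $\mbf{u}^c\nabla_c\mbf{a}^a \mp \Rho_c{}^a\mbf{u}^c$ is proportional to $\mbf{u}^a$, which is precisely the conformal circle equation \eqref{eq-proj-inv-wt2}: contracting with $\mbf{u}_a$ and using $\mbf{u}^a\mbf{a}_a = 0$ together with $\mbf{u}^a\mbf{u}_a = \pm 1$ fixes the proportionality factor to be $-(\Rho_{bc}\mbf{u}^b\mbf{u}^c \pm \mbf{a}\cdot\mbf{a})$, matching \eqref{eq-proj-inv-wt2} exactly.

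The main obstacle is the slot bookkeeping in this last step: one must compute $DA$ carefully from \eqref{ctrids}, verify that the $Y$-slot genuinely drops out (which is exactly where orthogonality of weighted velocity and acceleration enters), and then recognise that demanding the surviving transverse $Z$-component to vanish is the same equation as \eqref{eq-proj-inv-wt2}, with the scalar factor automatically taking the correct value. Everything else — the Leibniz differentiation and the cancellation of the two repeated-factor terms — is formal, and passing to a parallel scale keeps the explicit computation short.
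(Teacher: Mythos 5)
Your proof is correct and follows essentially the same route as the paper: both differentiate $\Sigma$ along $\gamma$ using the tractor identities \eqref{ctrids} and identify the single surviving term with the conformal circle equation \eqref{eq-proj-inv-wt}. The paper simply differentiates the explicit slot formula \eqref{cSigma} and reads off $\mbf{u}^d \nabla_d \Sigma^{ABC} = 6\left( \mbf{u}^d \nabla_d \mbf{a}^c \mp \mbf{u}^d \Rho_d{}^c \right) \mbf{u}^b X^{[A} Z^{B}{}_b Z^{C]}{}_c$ in one line, which is exactly your term $6\,\sigma^{-1} X^{[A} U^B (DA)^{C]}$ written out, so your Wronskian/Leibniz organisation and the parallel-scale slot analysis are a repackaging of the same computation rather than a different argument.
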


\begin{proof} Let $\mbf{u}^a$ be the weighted velocity of $\gamma$. Then differentiating $\Sigma^{{A}{B}{C}}$ gives
  \begin{align*}
    \mbf{u}^d \nabla_d \Sigma^{{A} {B} {C}} & =  6 \left( \mbf{u}^d \nabla_d \mbf{a}^c \mp \mbf{u}^d \Rho_d{}^c \right) \mbf{u}^b \, X^{[{A}}  Z^{{B}}{}_b Z^{{C}]} {}_c\, , & \mbox{whenever $\mbf{u}^a \mbf{u}_a = \pm 1$,}
  \end{align*}
  and the result follows immediately -- see equation \eqref{eq-proj-inv-wt}.
\end{proof}

Before we continue we need a few more technical results, as
follows. Given an oriented curve $\gamma$, and a choice of scale $\si$ along $\gamma$, these are established
using the definition of $U$, $A$, and the tractor identities following  \nn{Vsplit} in Section \ref{csec}:
\begin{align}
  U \cdot U & = \pm 1 \, ,
  \label{eq-U.U}\\
  A \cdot A & = \pm 2 \, \sigma \left(  \mbf{u}^a \mbf{u}^b \nabla_a \nabla_b \sigma + \mbf{a}^a \nabla_a \sigma - \frac{1}{2} \sigma^{-1} (\mbf{u}^a \nabla_a \sigma)^2+ \mbf{u}^a \mbf{u}^b \Rho_{ab} \sigma \pm \frac{1}{2} \mbf{a}^b \mbf{a}_b \sigma \right) \, , \label{eq-A.A} \\
  X \cdot A & = \mp \sigma \, , \label{eq-X.A}                                                                                                                                                                                                                                        \\
  X \cdot U & = 0 \, , \label{eq-X.U}                                                                                                                                                                                                                                                 \\
  U \cdot A & = 0 \, ,\label{eq-U.A}
\end{align}
where the weighted velocity $\mbf{u}^a$ satisfies $\mbf{u}^a \mbf{u}_a = \pm 1$, and $\mbf{a}^b$ is the weighted acceleration.

The \emph{signature} of a simple $k$-tractor $\Sigma$ will refer to the signature of the restriction of the tractor metric to the span of the factors of $\Sigma$. The identities above yield the following lemma.
\begin{lem} \label{llem}
  Let $\gamma$ be an oriented nowhere-null curve on $({M}, \bc)$ with weighted velocity $\mbf{u}^a$ and
  associated $3$-tractor $\Sigma$ as defined by \nn{eq-SigmaXUA}.  At any point $\Sigma$ has signature $(+,+,-)$ if $\gamma$ is spacelike, and $(+,-,-)$ if $\gamma$ is timelike. Moreover, we have $|\Sigma|^2 := \frac{1}{6} \Sigma^{ABC}\Sigma_{ABC}=\mp 1$ whenever $\mbf{u}^a \mbf{u}_a = \pm 1$.
\end{lem}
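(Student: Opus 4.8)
The plan is to exploit the fact that, by the definition of the signature of a simple tractor recorded just before the statement, both assertions reduce to linear algebra on the $3\times 3$ Gram matrix of the factors of $\Sigma$. Since $\Sigma^{ABC} = \sigma^{-1}\, 6\, X^{[A}U^B A^{C]}$ is $\sigma^{-1}$ times the wedge $X\wedge U\wedge A$, its signature is that of the restriction of the tractor metric to $\mathrm{span}\{X,U,A\}$, i.e.\ the signature of
$$
G := \begin{pmatrix} X\cdot X & X\cdot U & X\cdot A \\ U\cdot X & U\cdot U & U\cdot A \\ A\cdot X & A\cdot U & A\cdot A\end{pmatrix}.
$$
First I would assemble $G$ from the identities \eqref{eq-U.U}--\eqref{eq-U.A}, together with the nullity $X\cdot X = h(X,X)=0$ recorded in Section \ref{csec}. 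This gives, whenever $\mbf{u}^a\mbf{u}_a=\pm 1$,
$$
G = \begin{pmatrix} 0 & 0 & \mp\sigma \\ 0 & \pm 1 & 0 \\ \mp\sigma & 0 & A\cdot A\end{pmatrix},
$$
with $A\cdot A$ given by the (as it turns out, irrelevant) expression \eqref{eq-A.A}.

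Next I would compute $\det G$ by expanding along the first row. Only the $X\cdot A$ entry survives, and the bulky quantity $A\cdot A$ drops out entirely because of the zero in the $(1,1)$ slot; one finds $\det G = \mp\sigma^2$. Since $\sigma>0$ this is nonzero, so $G$ is nondegenerate, whence $X,U,A$ are linearly independent and $\Sigma$ is a genuine nonzero simple $3$-tractor with three-dimensional span. For the norm, I would invoke the standard identity $\tfrac{1}{3!}(w_1\wedge w_2\wedge w_3)^{ABC}(w_1\wedge w_2\wedge w_3)_{ABC} = \det(w_i\cdot w_j)$ for a simple $3$-vector; applied to $(w_1,w_2,w_3)=(X,U,A)$ and carrying the $\sigma^{-1}$ prefactor, this yields $|\Sigma|^2 = \tfrac{1}{6}\Sigma^{ABC}\Sigma_{ABC} = \sigma^{-2}\det G = \mp 1$, exactly as claimed.

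For the signature I would read off $G$ directly. The vector $U$ is orthogonal to both $X$ and $A$ by \eqref{eq-X.U} and \eqref{eq-U.A}, so it splits off and contributes a single eigenvalue of sign $\pm 1$. The remaining block on $\mathrm{span}\{X,A\}$ is $\left(\begin{smallmatrix} 0 & \mp\sigma \\ \mp\sigma & A\cdot A\end{smallmatrix}\right)$, whose determinant is $-\sigma^2<0$; a symmetric $2\times 2$ matrix of negative determinant necessarily has one positive and one negative eigenvalue, so this block has signature $(+,-)$ regardless of the value of $A\cdot A$. Combining, the spacelike case ($+$) yields $(+,+,-)$ and the timelike case ($-$) yields $(+,-,-)$. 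The only real care needed anywhere is the bookkeeping of the $\pm/\mp$ signs and the combinatorial normalisation $\tfrac{1}{6}=\tfrac{1}{3!}$; there is no genuine analytic obstacle, and in particular the complicated second-order expression \eqref{eq-A.A} plays no role in either conclusion.
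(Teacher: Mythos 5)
Your proof is correct and follows essentially the same route as the paper's: both arguments reduce the claim to elementary linear algebra on $\mathrm{span}\{X,U,A\}$ using the identities \eqref{eq-U.U}--\eqref{eq-U.A}, the paper by exhibiting an explicit pseudo-orthonormal basis $\{U,B,C\}$ of that span, and you by reading the signature off the Gram matrix (the $U$-direction splits off, and the $\{X,A\}$-block has negative determinant, hence signature $(+,-)$). The only real difference is cosmetic, and your Gram-determinant computation $|\Sigma|^2=\sigma^{-2}\det G=\mp 1$ usefully fills in the final claim, which the paper dismisses as following ``easily''.
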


\begin{proof}
  Let us define
  \begin{align*}
    B^{A} & := \frac{1}{\sqrt{2}} \left( A^{A} \pm \left( \frac{A \cdot A}{2} - 1 \right) \sigma^{-1} X^{A} \right) \, , & C^{A} & := \frac{1}{\sqrt{2}} \left( A^{A} \pm \left( \frac{A \cdot A}{2} + 1 \right) \sigma^{-1} X^{A} \right) \, ,
  \end{align*}
  where the sign is chosen according to whether $\gamma$ is spacelike or
  timelike respectively. Then, from \eqref{eq-X.A}, it is easy to check that $B \cdot
    B = 1$ and $C \cdot C = -1$, and that $U^{A}$, $B^A$ and $C^A$ are
  mutually orthogonal. Using \eqref{eq-U.U} now yields the
  signature of $\Sigma$. The final claim also follows easily, or directly from the
  identities above.
\end{proof}

\medskip

We are  now ready to prove the main result.\\
\noindent{\em Proof of Theorem \ref{main-c}.}
If $\gamma$ is a conformal circle then we can simply take
$\Sigma^{{A}{B}{C}}$ to be \eqref{eq-SigmaXUA} and so the forward direction follows from Proposition \ref{prop-conf-circ-par}.

For the
converse suppose that $\gamma $ is a nowhere-null oriented curve satisfying
\begin{align}\label{eq-inc-conf-circ}
  X^{[{A}} \Sigma^{{B} {C} {D}]} & = 0 \, ,
\end{align}
for some non-zero $3$-tractor $\Sigma^{ABC}$ which is parallel along $\gamma$ for the tractor connection.  Let
$\mbf{u}^a$ denote the weighted velocity associated to $\gamma$.
Picking a scale $\si\in\Gamma(\ce_+[1])$ to compute, and
differentiating $\si^{-1}$ times \nn{eq-inc-conf-circ} (with $\mbf{u}^b\nabla_b$) along
$\gamma$ we obtain that
$$
  U^{[{A}} \Sigma^{{B} {C} {D}]} = 0.
$$
Now differentiating this last display, again with  $\mbf{u}^b\nabla_b$,  we conclude that
$$A^{[{A}} \Sigma^{{B} {C} {D}]} = 0$$ where $A^{A}$ is the
acceleration tractor of $\gamma$. Since $X^{A}$, $U^{A}$ and $A^{A}$
are linearly independent, $\Sigma^{{A}{B}{C}}$ must be
$f\si^{-1}X^{[{A}} U^{B} A^{{C}]}$ for some function $f$ along
$\gamma$. Using the tractor metric to contract this with itself, and comparing with the last statement of Lemma \ref{llem} we
conclude that $f^2$ is constant along $\gamma$, and with no loss of generality, $\Sigma$ can be taken to be given by \eqref{eq-SigmaXUA}. The result now follows from Proposition
\ref{prop-conf-circ-par}.  \hfill $\Box$

\vspace{1cm}

\subsection{A comment on parametrised conformal circles} \label{param}
As is already pointed out in \cite{Bailey1990a}, the parametrised conformal circle equation
\eqref{par-conf-circ} is equivalent to its unparametrised counterpart \eqref{eq-conf-circ} together with the additional condition
\begin{align}\label{eq-conf-circ-par}
  \left( u^b \nabla_b a^a \right) u_a & = 3 \frac{(u \cdot a)^2}{u \cdot u} - \frac{3}{2} ( a \cdot a) - ( u \cdot u ) u^a u^b \Rho_{ab} \, .
\end{align}
In particular, this equation must govern the choice the choice of parametrisation of a conformal circle. In fact,
it is shown in \cite{BEG} that {\em any} curve $\gamma$ on $(M,\bc)$, not necessarily a conformal circle, is projectively parametrised if and only if its acceleration tractor satisfies
\begin{align}\label{eq-A2=0}
  A \cdot A & = 0 \, .
\end{align}
From \eqref{eq-A.A}, we can immediately conclude
\begin{lem}
  Let $\gamma$ be an oriented nowhere-null curve on $(M,\bc)$ with weighted velocity $\mbf{u}^a$ and acceleration $\mbf{a}^a$. Then a density $\sigma \in \Gamma( \mc{E}[1] )$ determines a projective parametrisation of $\gamma$ if and only if, for any $g \in \bc$ with Levi-Civita connection $\nabla$, $\sigma$ satisfies
  \begin{align*}
    \mbf{u}^a \mbf{u}^b \nabla_a \nabla_b \sigma + \mbf{a}^a \nabla_a \sigma - \frac{1}{2} \sigma^{-1} (\mbf{u}^a \nabla_a \sigma)^2+ \mbf{u}^a \mbf{u}^b \Rho_{ab} \sigma \pm \frac{1}{2} \mbf{a}^b \mbf{a}_b \sigma & = 0 \, ,
  \end{align*}
  or equivalently, as a prolonged system,
  \begin{align*}
      & \mbf{u}^a \nabla_a \sigma - \tau = 0 \, ,                                                                                                           \\
      & \mbf{u}^a \nabla_a \tau - \frac{1}{2} \tau^2 \sigma^{-1} + \mbf{u}^a \mbf{u}^b \Rho_{ab} \sigma \pm \frac{1}{2} \mbf{a}^b \mbf{a}_b \sigma = 0 \, ,
  \end{align*}
  whenever $\mbf{u}^a \mbf{u}_a = \pm 1$.
\end{lem}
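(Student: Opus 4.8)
The plan is to obtain both forms of the equation directly from two facts already recorded in the excerpt: the characterisation in \eqref{eq-A2=0} that $\si$ determines a projective parametrisation of $\gamma$ precisely when the acceleration tractor satisfies $A \cdot A = 0$, together with the explicit formula for $A \cdot A$ in \eqref{eq-A.A}. No new geometric input is needed; the whole argument is a rearrangement of these.

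First I would read off from \eqref{eq-A.A} that, whenever $\mbf{u}^a \mbf{u}_a = \pm 1$,
\begin{align*}
  A \cdot A = \pm 2 \, \si \left( \mbf{u}^a \mbf{u}^b \nabla_a \nabla_b \si + \mbf{a}^a \nabla_a \si - \tfrac{1}{2} \si^{-1} (\mbf{u}^a \nabla_a \si)^2 + \mbf{u}^a \mbf{u}^b \Rho_{ab} \si \pm \tfrac{1}{2} \mbf{a}^b \mbf{a}_b \si \right) .
\end{align*}
Since a scale determining a parametrisation is strictly positive, hence nowhere zero along $\gamma$, the prefactor $\pm 2 \, \si$ is invertible, so $A \cdot A = 0$ holds if and only if the bracketed expression vanishes. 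Combined with \eqref{eq-A2=0} this is exactly the first displayed equation in the statement, establishing the scalar form as an equivalence.

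For the prolonged form I would introduce $\tau := \mbf{u}^a \nabla_a \si$ as a new unknown, so that the first equation of the system holds by definition. Then, using the Leibniz rule together with the definition $\mbf{a}^b = \mbf{u}^a \nabla_a \mbf{u}^b$ from \eqref{eq-wt-acc}, I would compute
\begin{align*}
  \mbf{u}^a \nabla_a \tau = \mbf{u}^a \nabla_a ( \mbf{u}^b \nabla_b \si ) = \mbf{a}^b \nabla_b \si + \mbf{u}^a \mbf{u}^b \nabla_a \nabla_b \si ,
\end{align*}
so that $\mbf{u}^a \mbf{u}^b \nabla_a \nabla_b \si = \mbf{u}^a \nabla_a \tau - \mbf{a}^b \nabla_b \si$. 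Substituting this into the scalar equation, the first-order term $+ \mbf{a}^a \nabla_a \si$ cancels against $- \mbf{a}^b \nabla_b \si$, and writing $(\mbf{u}^a \nabla_a \si)^2 = \tau^2$ yields precisely the second equation of the prolonged system. Reversing these steps recovers the scalar equation from the system, giving the stated equivalence.

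This is an entirely computational verification with no real obstacle; the only points requiring care are the consistent bookkeeping of the two occurrences of the sign $\pm$ (tied respectively to whether $\gamma$ is spacelike or timelike, and appearing inside the bracket), and the observation that dividing by $\pm 2 \, \si$ loses no information precisely because a scale is nowhere vanishing, so that $A \cdot A = 0$ and the bracketed equation are genuinely equivalent.
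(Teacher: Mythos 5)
Your proposal is correct and follows essentially the same route as the paper, whose proof consists precisely of invoking the characterisation $A \cdot A = 0$ of projective parametrisations together with the explicit formula \eqref{eq-A.A} and dividing out the nowhere-vanishing prefactor $\pm 2\,\si$. The only detail worth noting is that the hypothesis $\si \in \Gamma(\mc{E}[1])$ does not by itself guarantee positivity, but the presence of $\si^{-1}$ in the equation (and the requirement that $\si$ determine a parametrisation) forces $\si$ to be nowhere zero along $\gamma$, which is all your division step needs.
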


\subsection{The model and incidence}\label{circ-model}

In the context of conformal circles, we recall from section \ref{nmodel} that the flat model consists of a conformal quadric $\mc{Q}$ embedded in $S^{n+1} = \mbb{P}_+ (\R^{n+2})$ as the image under $\mbb{P}_+$ of the null cone $\mbb{N} \subset \R^{n+2}$ defined by a non-degenerate symmetric bilinear form $h$ of signature $(p+1,q+1)$ on $\R^{n+2}$.

Each orbit of the Lie group $G=\SO(p+1,q+1)$ acting on $\R^{n+2}$ is characterised by the signature of $h$ restricted to the lines through the origin. These may be null, spacelike or timelike. In particular,  we can identify $\mc{Q}$ as the closed orbit of the Lie group $G=\SO(p+1,q+1)$ on $\R^{n+2}$ arising from $\mbb{N}$.

We can similarly describe the space of oriented conformal circles in $\mc{Q}$ as an open orbit of the Lie group $G$ on the space $\mbb{P}_+ \wt{\T}$ of oriented $2$-planes in $\mbb{P}_+ (\R^{n+2})$ where $\wt{\T} := \left\{ \Sigma^{A B C}  \in  \Lambda^3 \R^{n+2}  : \Sigma^{[A B C} \Sigma^{D] E F} = 0 \right\}$ consists of all simple elements of $\Lambda^3 \R^{n+2}$. In indefinite signature, the space of oriented conformal circles space splits into two disjoint connected components according to whether they are spacelike or timelike.

From our previous discussion, especially in connection to Lemma \ref{llem}, the space of oriented spacelike, respectively timelike, conformal circles is the $(3n-3)$-dimensional ray projectivisation $\mbb{P}_+ \T^{+, +, -}$, respectively $\mbb{P}_+ \T^{+,-,-}$, of the smooth variety
\begin{equation}
  \T^{+,\pm,-} := \left\{ \Sigma^{A B C} \in \Lambda^3 \R^{n+2} : \Sigma^{[A B C} \Sigma^{D] E F} = 0 \, , \mathrm{sign}(\Sigma) = (+,\pm,-) \right\} \subset  \wt{\T} \, ,
\end{equation}
where $\mathrm{sign}(\Sigma)$ denotes the signature of an element $\Sigma$ of $\wt{\T}$, that is, the signature of the restriction of $h$ to the span of the factors of $\Sigma$.

A conformal circle here is a circle in $\mc{Q}$ arising from the intersection of $\mc{Q}$ and the oriented $2$-plane defined by an element of $\mbb{P}_+ \T^{+, \pm, -}$.

Finally, the correspondence space between $\mc{Q}$ and $\mbb{P}_+ \T^{+, \pm, -}$ is the $(3n-2)$-dimensional submanifold $\mc{F}^{+, \pm, -}$ of $\mc{Q} \times  \mbb{P}_+ \T^{+, \pm, -}$ defined by the incidence relation $X \wedge \Sigma = 0$, for $[X] \in \mc{Q}$, $[\Sigma] \in \mbb{P}_+ \T^{+, \pm, -}$.
It gives rise to the double fibration
\begin{align*}
  \xymatrix { & \mc{F}^{+, \pm, -}  \ar[dl] \ar[dr] &                            \\
  \mc{Q}      &                                     & \mbb{P}_+ \T^{+, \pm, -} }
\end{align*}
As a bundle over $\mc{Q}$, a fibre of $\mc{F}^{+, \pm, -}$ over a point $x$ in $\mc{Q}$ can be identified as the space of all velocity and acceleration directions of all curves through $x$. The velocity and acceleration vectors up to scale of a curve $\gamma$ in $\mc{Q}$ determine a lift of $\gamma$ to $\mc{F}^{+, \pm, -}$, which descends to a point of $\mbb{P}_+ \T^{+, \pm, -}$ if and only if $\gamma$ is an oriented conformal circle.

Unlike in the projective and the null conformal cases, the metric on $\Lambda^3 \R^{n+2}$ allows us to single out a unique representative $\Sigma \in \T^{+, \pm, -}$ of an element of $\mbb{P}_+ \T^{+, \pm, -}$  by choosing the normalisation $|\Sigma|^2 = \mp 1$.

\subsubsection{Initial conditions for conformal circles}\label{sect-IC-conf-circles}
The discussion above allows us to treat the problem of initial conditions for oriented conformal circles on a conformal manifold in a way similar to the projective and null conformal treatments of Sections \ref{sect-IC-geod} and \ref{sect-IC-n-geod}. The only difference here is that given a point $\mr{x}$, we need to specify a velocity $\mr{u}$ and an acceleration $\mr{a}$ at $\mr{x}$ consistent with initial conditions for the third-order ODE \eqref{eq-conf-circ} governing conformal circles.

\begin{thm}
  Let $(M,\bc)$ be an $n$-dimensional conformal manifold. Fix a point $\mr{x}$ in $M$ so that $X_{\mr{x}}$ is the canonical tractor based at $\mr{x}$. Then, for every non-zero element $\mr{\Sigma}$ of $\Lambda^3 \mc{T}_{\mr{x}}$ that satisfies the incidence relation:
  \begin{align}\label{eq-cc-IC}
    X_{\mr{x}}\wedge \mr{\Sigma} =0 \, ,
  \end{align}
  together with the following conditions
  \begin{enumerate}
    \item $\mr{\Sigma}$ is simple, \label{cond-simple}
    \item $\mr{\Sigma}$ has signature $(+,+,-)$ or $(+,-,-)$ \label{cond-sign},
  \end{enumerate}
  there locally exists a unique unparametrised oriented conformal circle $\gamma$ through $\mr{x}$, which is spacelike if $\mathrm{sign}(\mr{\Sigma})=(+,+,-)$ or timelike if $\mathrm{sign}(\mr{\Sigma})=(+,-,-)$. Further, if $\mr{\Sigma}$ is normalised to $|\mr{\Sigma}|^2 = \mp 1$, then the tractor $\Sigma$ associated to $\gamma$ (via Theorem \ref{main-c}) satisfies $\Sigma_{\mr{x}}= \mr{\Sigma}$.
\end{thm}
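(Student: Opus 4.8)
The plan is to follow the pattern of the proofs of Theorems \ref{thm-IC-geod} and \ref{thm-IC-n-geod}: decode from $\mr\Sigma$ the initial data for the third-order conformal circle ODE, invoke standard existence and uniqueness, and then identify the associated tractor via Theorem \ref{main-c}. The essential new content is purely algebraic, namely a dictionary between admissible elements of $\Lambda^3 \mc{T}_{\mr x}$ and jets $(\mr x, \mr u, \mr a)$, and this is where I expect the real work to lie.

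First I would fix a metric $g \in \bc$ (equivalently a scale $\sigma$) to split $\mc{T}_{\mr x}$ in the basis $X, Y, Z_a$, and expand $\mr\Sigma$. The incidence relation \eqref{eq-cc-IC} forces every term of $\mr\Sigma$ to contain $X$, so $\mr\Sigma = X \wedge \Phi$ for a $2$-tractor $\Phi$ determined modulo $X$; writing its image in $\mc{T}_{\mr x}/\langle X\rangle$ as $\Phi \equiv \phi^c\, \bar Y \wedge \bar Z_c + \tfrac12 \psi^{bc}\, \bar Z_b \wedge \bar Z_c$ isolates a velocity part $\phi^c$ and an acceleration part $\psi^{bc}$. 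The next step is to show that condition \ref{cond-simple} (simplicity) forces $\phi^{[c}\psi^{de]} = 0$ (together with $\psi$ simple, which is then automatic), so that provided $\phi \neq 0$ one has $\psi^{bc} = \phi^{[b} w^{c]}$ for some $w^c$. Comparison with the explicit form \eqref{cSigma} then identifies $\phi^c$ with a nonzero multiple of the weighted velocity $\mbf{u}^c$ and $w^c$ with the weighted acceleration direction $\mbf{a}^c$. Condition \ref{cond-sign} (signature) is precisely what guarantees $\phi \neq 0$: if $\phi = 0$ then simplicity gives $\mr\Sigma = X \wedge Z_v \wedge Z_w$, whose span has Gram matrix with a vanishing row coming from $X$ (since $X$ is null and $X \cdot Z_b = 0$), hence is degenerate and cannot have signature $(+,+,-)$ or $(+,-,-)$. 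Moreover, by Lemma \ref{llem} the two admissible signatures distinguish the spacelike case $\mbf{u}\cdot\mbf{u}=+1$ from the timelike case $\mbf{u}\cdot\mbf{u}=-1$ and fix the sign in \eqref{cSigma}.

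With the velocity $\mr u$ and acceleration $\mr a$ at $\mr x$ thus recovered, the normalisation $|\mr\Sigma|^2 = \mp 1$ from Lemma \ref{llem} fixes their magnitudes and the overall sign of $\mr\Sigma$ fixes the orientation (note that reversing orientation sends $\mbf{u}\mapsto -\mbf{u}$ but leaves $\mbf{a} = \mbf{u}^c\nabla_c\mbf{u}$ unchanged, so by \eqref{cSigma} it sends $\mr\Sigma \mapsto -\mr\Sigma$). These are exactly the initial data for the third-order ODE \eqref{eq-conf-circ}, equivalently \eqref{eq-par-conf-circ-unit}, so standard ODE existence and uniqueness yield a unique local unparametrised oriented conformal circle $\gamma$ through $\mr x$, spacelike or timelike according to $\mathrm{sign}(\mr\Sigma)$. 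Finally, letting $\Sigma_\gamma$ be the $3$-tractor of $\gamma$ furnished by Lemma \ref{tri-lem}, its value at $\mr x$ is built by \eqref{cSigma} from the same $\mbf{u},\mbf{a}$, so $\Sigma_\gamma|_{\mr x}$ and $\mr\Sigma$ agree up to positive scale; the common normalisation and matching orientation then give $\Sigma_\gamma|_{\mr x} = \mr\Sigma$. Two admissible data differing by a positive constant produce the same $\mr u$-direction and the same $\mr a$, hence the same oriented curve.

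The main obstacle I anticipate is the algebraic decoding in the second paragraph: establishing cleanly that simplicity forces the acceleration part into the shape $\phi \wedge w$, and that the signature hypothesis simultaneously excludes the degenerate case $\phi = 0$ and correctly separates the spacelike and timelike branches, all while tracking how the splitting-dependent coefficients $\phi^c,\psi^{bc}$ relate to the invariant weighted velocity and acceleration of \eqref{cSigma}. Once this dictionary is in place, the existence, uniqueness, and tractor-matching steps are routine consequences of the results already established.
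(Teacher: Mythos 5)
Your proposal is correct and follows essentially the same route as the paper's own (much terser) proof: decode the initial data $(\mr{u},\mr{a})$ at $\mr{x}$ from the incidence relation together with simplicity, use the signature condition to place the velocity in the spacelike or timelike branch, apply ODE existence and uniqueness to \eqref{eq-conf-circ}, and match $\Sigma_{\mr{x}}$ with $\mr{\Sigma}$ via \eqref{cSigma} and the normalisation of Lemma \ref{llem}. The one point to tighten is that condition \ref{cond-sign} must be used to exclude not only $\phi=0$ but also a \emph{null} nonzero velocity part $\phi$ (in either case the Gram matrix of the span of $\mr{\Sigma}$ is degenerate, since its determinant is $-\phi\cdot\phi$ up to sign), which your appeal to Lemma \ref{llem} covers only implicitly.
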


\begin{proof}
  The reasoning follows the proofs of Theorems \ref{thm-IC-geod} and \ref{thm-IC-n-geod}. The incidence relation \eqref{eq-cc-IC} together with the condition \ref{cond-simple} yields two vectors $\mr{u}$ and $\mr{a}$ at $\mr{x}$. Condition \ref{cond-sign} tells us that $\mr{u}$ is either spacelike or timelike. The existence and uniqueness of the conformal circle through $\mr{x}$ now follows from the theory of ODE applied to \eqref{eq-conf-circ}.
\end{proof}

\section{Symmetry and first BGG type equations}\label{BGG-sect}

On a Riemannian or pseudo-Riemannian manifold $(M,g)$ a vector field
$k$ is an infinitesimal isometry, or {\em Killing
    vector}, if it satisfies the Killing equation $\cL_k g=0$, which may
be written as $\nabla_{(a}k_{b)}=0$ in terms of the Levi-Civita
connection $\nabla$. More generally a tensor $k\in \Gamma(\odot^kT^*M) $
is called a {\em Killing tensor} if it satisfies the equation
\begin{equation}\label{Kt}
  \nabla_{(a_0} k_{a_1\cdots a_k)}=0.
\end{equation}
A rank-$k$ {\em Killing-Yano tensor} (or form) is a $k$-form $F \in\Gamma(\Lambda^{k} T^*M)$
that satisfies
\begin{equation}\label{KY}
  \nabla F \in \Gamma(\Lambda^{k+1} T^*M).
\end{equation}
Both Killing tensors and Killing-Yano tensors have been used for the
construction of first integrals of geodesics
\cite{Andersson-Blue,Frolov2017}.

There are conformal variants of these equations: the {\em conformal Killing equation}
\begin{equation}\label{cKt}
  \nabla_{(a_0}K_{a_1\cdots a_k)_0}=0,
\end{equation}
on trace-free  rank-$k$ tensors $K$ ($k\geq 1$); and the {\em conformal Killing-Yano equation}
\begin{equation}\label{cKY}
  \operatorname{tf}(\nabla F) \in \Gamma(\Lambda^{k+1} T^*M),
\end{equation}
on $k$-forms $F$, where $\operatorname{tf}$ means the (metric)
trace-free part of the given tensor.
These are known for providing first integrals of null geodesics.

For the cases of $k\geq 2$, the solutions of these equations are
sometimes called {\em hidden symmetries} because they are not
symmetries, in the usual sense, of $(M,g)$ (or `configuration space'),
but rather symmetries of the standard metric Hamiltonian on the
cotangent bundle (or `phase space') which do not descend to
isometries, but nevertheless provide such conserved quantities.  It
turns out that the equations \nn{Kt} and \nn{KY} are projectively
invariant, meaning that they descend to well defined equations on
projective manifolds, provided $k$ and $F$ are assigned a suitable
projective weight.  In fact they are in the class of {\em projective
    first BGG equations}. Similarly if $K$ and $F$ are assigned suitable
conformal weights then \nn{cKt} and \nn{cKY} are conformally invariant
and are, in particular, {\em conformal first BGG equations}.

The Killing type equations above are very specific examples of first BGG equations ;there are a vast number of equations in this class.
Using the characterisation of distinguished curves from
Theorems \ref{main-p}, \ref{main-nullc}, and \ref{main-c} we will show that a suitable
solution to any one of these can lead to first integrals of geodesics
(in the case of projective BGGs) and of conformal circles (in the case of conformal BGGs). Moreover we will show that this fits into a uniform and elegant general theory.

\subsection{Elements of BGG theory}\label{BGGsec}

Recall our model for oriented projective geometry is
$S^{n}=\mathbb{P}_+(\mathbb{R}^{n+1})$. This is a homogeneous space
for $G=\SL(\mathbb{R}^{n+1})\cong \SL(n+1)$, and
$\mathbb{P}_+(\mathbb{R}^{n+1}) =G/P$ where $P$ may be taken to be the
parabolic subgroup stabilising a fixed nominated ray from the origin.

Similarly recall our model oriented conformal geometries of signature
$(p,q)$ is the ray projectivisation of $\mathbb{N}$, where $\mathbb{N}$ is the null quadric
$\mathbb{N}:=\{X\in\mathbb{R}^{n+2} \mid h(X,X) =0\}$
in $\mathbb{R}^{n+2}$ equipped with a fixed symmetric non-degenerate
bilinear form $h$, of signature $(p+1,q+1)$. This $S^p \times S^q$
is acted on transitively by $G=\SO(p+1,q+1)$ and the stabiliser of a
point is again a parabolic subgroup that we will also denote $P$.

Conformal and projective geometries are examples of parabolic
geometries, as studied generally in \cite{CS-book}, and we refer the
reader to that source for general background. Each such geometry is by
definition modelled on a homogeneous manifold $G/P$ where $G$ is a
semi-simple Lie group and $P$ is a parabolic subgroup. It consists of a
manifold $M$ of dimension $\operatorname{dim}(G/P)$ and a
$P$-principal bundle $\cG\to M$ equipped with a canonical {\em Cartan
    connection } $\om$ which is a suitably equivariant
$\mathfrak{g}$-valued 1-form that provides a total
parallelization of $T\cG$. Here $\mathfrak{g}$ denotes the Lie algebra of $G$.
For any representation $\mathbb{U}$ of $P$, one has a corresponding associated bundle $\cG \times_P \mathbb{U}$.
The tractor bundles are the associated
bundles $\cW:=\cG\times_P \mathbb{W}$ where $\mathbb{W}$ is a linear
representation space of $G$ (and hence also of $P$ by restriction)\cite{CG-tams}. On
these the Cartan connection induces a linear connection $\nabla^{\mc{W}}$. In fact this is easily understood.
As for any associated bundle, a section $T$ of $\cW$ is equivalent to a function
\begin{equation}\label{sections}
  t:\cG\to \mathbb{W} \quad \mbox{satisfying} \quad t(u\cdot p)= \rho(p^{-1})t(u)
\end{equation}
for all $u\in \cG$ and $p\in P$, with $\rho$ denoting the
representation of $G$ (restricted here to $P$) on $\mathbb{W}
$. Given a smooth vector field $\xi$ on $M$ we can always find a lift
$\overline{\xi}$ of $\xi$ to $\cG$ that is invariant under the
principal right action of $P$.  Then
\begin{equation}\label{lift-conn}
  ( \overline{\xi} \cdot t +\rho' (\om (\overline{\xi})) t ):\cG\to \mathbb{W}
\end{equation}
is a $P$-equivariant  function, where the representation $\rho':\mathfrak{g}\to \End (\mathbb{W})$ is the derivative of the representation $\rho$.
This is independent of the choice of lift $\overline{\xi}$ and is precisely the equivariant function corresponding to $\nabla^\cW_\xi T$ \cite{CG-tams}. In particular, $T$ is parallel along $\xi$, i.e.\ $\nabla^\cW_\xi T = 0$, if and only if its corresponding $P$-equivariant function $t$ satisfies
\begin{align}\label{par-lift-conn}
  \overline{\xi} \cdot t +\rho' (\om (\overline{\xi})) t & = 0 \, .
\end{align}

For a classical Lie group $G$ the defining (or sometimes called standard)
representation gives the tractor bundle what we term the standard
tractor bundle. Conversely given such a tractor bundle $\cT$ and its
linear tractor connection we can recover the Cartan bundle, as an
adapted frame bundle for $\cT$, and the Cartan connection from
the tractor connection $\nabla^\cT$ on $\cT$, see \cite{CG-tams}, essentially by extracting it from the formula \eqref{lift-conn}.
In particular for projective and conformal geometries we obtain Cartan
connections modelled on the appropriate $(G,P)$ as above, and the
information of the Cartan connection is contained in the respective
tractor connections as introduced above. It is convenient here to use
mainly the same notation for the projective and conformal cases as the
general discussion applies to both.

From \nn{euler} there is a projectively invariant injective bundle map
$$
  \mathbb{X}:T^* M\to \End (\cT)
$$
given by $u_b\mapsto X^A Z_{B}{}^b u_b$. Similarly for conformal geometry there the similar bundle embedding \eqref{injpc}
$$
  \bX: T^*M \to \Lambda^2\cT \subset \End (\cT),
$$
where the tractor metric is used in the obvious way to identify elements of $ \Lambda^2\cT$ with
skew elements of $\End (\cT)$.
In either case sections of $\End (\cT)$ act on tractor bundles in
the obvious tensorial way and so, via each respective $\mathbb{X}$, we have a
canonical action of $T^*M$ on any tractor bundle $\cV$ and this
induces a sequence of invariant bundle maps
\begin{equation}\label{Kostant}
  \partial^* : \Lambda^k T^*M\otimes \cV \to  \Lambda^{k-1} T^*M\otimes \cV,\quad k=1,\cdots,n+1.
\end{equation}
This is the (bundle version of the) Kostant codifferential for
projective, respectively conformal, geometry and satisfies
$\partial^*\circ \partial^*=0$; so it determines subquotient bundles
$\cH_k(M,\cV):=\operatorname{ker}(\partial^*)/\operatorname{im}(\partial^*)
$ of the $\cV$-valued tractor bundles $\Lambda^k T^*M\otimes \cV
$.

Next, for each tractor bundle $\cV=\cG\times_P \mathbb{V}$, with $\mathbb{V}$ irreducible for $G$,
one obtains a so-called \textit{BGG-sequence} \cite{CSS} see also \cite{CD}.
$$
  \cH_0 \stackrel{\cD^{\cV}_0}{\rightarrow} \cH_1 \overset{\cD^{\cV}_1}{\rightarrow}\cdots
  \overset{\cD^{\cV}_{n-1}}{\rightarrow} \cH_n \, .
$$
Here  $\cH_k= \cH_k(M,\cV)$ and
each $\cD^{\cV}_i$ is a linear projectively, respectively conformally, invariant differential operator.

Here we will only be interested in the operator $\cD^\cV=\cD^\cV_0$,
which defines an overdetermined system and is closely related to the
tractor connection $\na$ on $\cV$.  The parabolic subgroup $P\subset
  G$ determines a filtration on $\mathbb{V}$ by $P$--invariant
subspaces. Denoting the largest non--trivial filtration component by
$\V^0\subset \V$, then $\cH_0$ is the quotient $\cV/\cV^0$. Here, $\cV^0$ is the corresponding associated bundle for $\V^0$, and we write $\Pi: \cV \to \cH_0$ for the natural projection.

We recall here the construction of the first BGG operators
$\cD^\cV$, and also the definition of the special class of so
called normal solutions (cf.\ \cite{Leitner}) for these
operators. For the current article we only need the following very
general theorem in the setting of projective or conformal
geometry. If the former, we take $G=\SL(n+1,\mathbb{R})$, $\cG$ the
projective Cartan bundle and invariance means projective
invariance. In the case of conformal geometry we take $G$ to mean
$\SO(p+1,q+1)$, $\cG$ the conformal Cartan bundle and invariance
means conformal invariance.

\begin{thm}[\cite{CGH}] \label{normp}
  Let $\mathbb{V}$ be an irreducible $G$-representation and let $\cV: =\cG\times_P \mathbb{V}$.
  There is a unique  invariant differential operator
  $L: \cH_0 \to  \cV$ such that $\Pi\circ L$ is the identity map on $\cH_0$ and
  $\nabla \circ L$ lies in $\operatorname{ker}(\partial^*)\subset T^*M\otimes \cV$.
  For $\si\in \Gamma(\cH_0)$, $\cD^\cV \si$ is given by projecting $\nabla (L(\si))$ to $\Gamma (\cH_1)$, i.e. $\cD^\cV \si = \Pi(\nabla(L(\si)))$.

  Furthermore  the bundle map
  $\Pi$ induces an injection from the space of parallel sections of
  $\mathcal{V}$ to a subspace of $\Gamma(\mathcal{H}_0)$ which is
  contained in the kernel of the first BGG operator
  \begin{equation}\label{fBGG}
    \cD^\cV : \cH_0 \to \cH_1 \, .
  \end{equation}
\end{thm}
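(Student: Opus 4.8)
The plan is to deduce both conclusions---that $\Pi$ is injective on the space of parallel sections, and that its image lies in $\ker\cD^\cV$---from the single identity $L\circ\Pi = \mathrm{id}$ on parallel sections, and then to prove that identity from the characterisation of the splitting operator $L$ supplied by the first part of the theorem. First I would isolate the key claim: \emph{if $s\in\Gamma(\cV)$ satisfies $\nabla s = 0$, then $s = L(\Pi(s))$.}

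Granting the key claim, both assertions are short formal consequences. Since $\Pi$ and $L$ are linear, injectivity on parallel sections is equivalent to triviality of the kernel there: if $s$ is parallel with $\Pi(s)=0$, then $s = L(\Pi(s)) = L(0) = 0$. For the containment in $\ker\cD^\cV$, I would use the formula $\cD^\cV\sigma = \Pi(\nabla(L(\sigma)))$ (where this $\Pi$ is the projection onto $\cH_1$). Taking $\sigma = \Pi(s)$ and substituting $L(\Pi(s))=s$ gives $\cD^\cV(\Pi(s)) = \Pi(\nabla(L(\Pi(s)))) = \Pi(\nabla s) = \Pi(0) = 0$. Thus the image of $\Pi$ on parallel sections is a subspace of $\Gamma(\cH_0)$ contained in $\ker\cD^\cV$, exactly as claimed.

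To establish the key claim I would appeal to the defining property of $L$: by the first part of the theorem $L(\sigma)$ is the unique section of $\cV$ projecting to $\sigma$ under $\Pi$ whose covariant derivative is annihilated by $\partial^*$, i.e.\ $\nabla L(\sigma)\in\ker(\partial^*)$. Now a parallel section $s$ automatically satisfies $\partial^*(\nabla s) = \partial^*(0) = 0$, so $s$ is itself a lift of $\Pi(s)$ meeting this normalisation. By the uniqueness of the normalised lift we conclude $s = L(\Pi(s))$.

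\emph{The main obstacle} is precisely the sectionwise uniqueness invoked above: that a section $w\in\Gamma(\cV^0)$ with $\partial^*(\nabla w)=0$ must vanish (apply this to $w = s - L(\Pi(s))$, which lies in $\Gamma(\cV^0)=\Gamma(\mathrm{im}\,\partial^*)$ and satisfies $\partial^*(\nabla w)=0$). This is where the algebraic structure enters. Writing the tractor connection in a scale as the sum of a filtration-raising algebraic part and a covariant part, the leading contribution to $\partial^*(\nabla w)$ for $w$ in the bottom filtration slot is the composite of $\partial^*$ with the algebraic part, which is the relevant component of the Kostant Laplacian $\Box$. Kostant's theorem guarantees that $\Box$ acts invertibly on $\mathrm{im}\,\partial^*$, and an induction on filtration degree then forces $w=0$. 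For the standard tractor bundles this is transparent and serves as a check: in the projective case a bottom-slot section $w^\beta=\rho X^\beta$ has $\nabla_a w^\beta = \rho\,W^\beta{}_a + (\nabla_a\rho)\,X^\beta$, whence $\partial^*(\nabla w)$ is a nonzero multiple of $\rho X$, so $\partial^*(\nabla w)=0$ gives $\rho = 0$ and $w=0$. Since the first part of the theorem already produces $L$ as a well-defined (hence single-valued) invariant operator, this uniqueness is in effect available, and the remaining deductions are as above.
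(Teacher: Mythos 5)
The paper does not actually prove this theorem: it is quoted from \cite{CGH}, so there is no in-text argument to compare yours against. Judged on its own terms, your derivation of the ``Furthermore'' clause is correct and is the standard one. The reduction of both conclusions to the single identity $s=L(\Pi(s))$ for parallel $s$ is clean, and you correctly spotted the one genuine subtlety: the uniqueness asserted in the first part of the theorem is uniqueness of the \emph{operator} $L$, which does not by itself tell you that an individual section $v$ with $\Pi(v)=\sigma$ and $\partial^*(\nabla v)=0$ must equal $L(\sigma)$. What is needed is sectionwise uniqueness of the normalised lift, i.e.\ that $w\in\Gamma(\cV^0)=\Gamma(\operatorname{im}\partial^*)$ with $\partial^*(\nabla w)=0$ forces $w=0$, and your sketch of this via the invertibility of the Kostant Laplacian on $\operatorname{im}\partial^*$ together with induction on filtration degree is exactly the argument in the BGG literature; your explicit check for the projective standard tractor bundle (using $\nabla_aX^\beta=W^\beta{}_a$ and $Z_\gamma{}^aW^\gamma{}_a=n$) is also right. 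Two cautions. First, your closing sentence, which suggests the needed uniqueness is ``in effect available'' because $L$ is a well-defined operator, conflates the two notions of uniqueness again; you should rely on the Kostant argument you gave, not on that remark. Second, a complete proof of the theorem as stated would also have to establish the \emph{existence} of the splitting operator $L$ (typically by correcting an arbitrary lift of $\sigma$ order by order using $\Box^{-1}$ on $\operatorname{im}\partial^*$); you take this as supplied, which is a reasonable division of labour but should be acknowledged as an appeal to \cite{CGH} rather than something your argument delivers.
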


\begin{defn}
  In the setting of the above theorem, elements of this subspace of $\Gamma(\cH_0)$ are called \underline{normal} solutions to the equation $\cD^\cV \si = 0$.
\end{defn}

\noindent The differential operator $L: \cH_0 \to \cV$, in the Theorem, is called a {\em BGG splitting operator}. We
sometimes denote this $L^\cV$ to emphasise the particular tractor
bundle involved.

By definition normal solutions to \eqref{fBGG} are in 1-1
correspondence with parallel sections of the corresponding tractor bundle
$\cV$. On geometries which are flat, according to the tractor/Cartan
connection, all solutions are normal and locally there is
$\operatorname{dim}(\mathbb{V})$-parameter family of such normal solutions.  A projective
manifold is flat in this way if the projective tractor curvature
\eqref{ptract-curv} vanishes and similarly a conformal manifold is flat if the tractor curvature
\eqref{ctract-curv} vanishes.

On curved manifolds only for a very few representations $\mathbb{V}$ of $G$ is
it the case that a solution $\si$ of \eqref{fBGG} is always normal. For example
this happens for the defining representation in both the conformal and
projective cases, and also the dual of that in the latter setting.  In
general $\nabla L(\si)$ is given by curvature terms acting on $L(\si)$
(and this can be reorganised to give an invariant prolongation connection
on $\cV$ so that solutions are in 1-1 correspondence with parallel
sections of $\cV$ \cite{HSSS}). Normal solutions (for which these
curvature terms necessarily annihilate $L(\si)$) often correspond to
interesting geometric conditions on the underlying manifold.  For
example on a projective manifold a parallel maximal rank section of
$\odot^2\cT$ (or the dual bundle) means that there is in the projective class $\bp$ an Einstein
metric with non-zero scalar curvature \cite{ArmstrongI,ArmstrongII,CGH,GH,G-Sil-ckforms}.

\section{Conserved quantities} \label{cq}

Here we give the main theorem concerning the first integrals that arise from the
normal solutions of first BGG equations. For all of the cases there
is a single principle for proliferating these as described in
the  theorem below.

Given a representation space
$\mathbb{V}$ of a  Lie group $G$ let us write $\bigotimes\mathbb{V}$ for the tensor
algebra generated by $\mathbb{V}$, and $\bigodot^m \mathbb{V}$ for the
$m$-fold symmetric tensor product of $\mathbb{V}$. In each case we
take this equipped with the representation of $G$ induced from that on
$\mathbb{V}$.

In the following theorem the meaning of the Lie group $G$, the corresponding Cartan bundle  $\cG$, and the irreducible $G$-representation space will depend
on the setting.
Either: \\
\noindent \hypertarget{p}{(p)} we work on an arbitrary projective manifold $(M^n,\bp)$ and view $\mathbb{R}^{n+1}$ as the defining representation for  $G=\SL(\mathbb{R}^{n+1})\cong \SL(n+1,\mathbb{R})$, and
$$
  \mathbb{W}_0:= \Lambda^2\mathbb{R}^{n+1} ;
$$
or\\
\noindent \hypertarget{n}{(n)} we work on an arbitrary conformal manifold $(M^n,\bc)$ of strictly indefinite signature $(p,q)$,  view $\mathbb{R}^{n+2}$ as the defining representation for $G:=\SO(h)\cong \SO(p+1,q+1)$, where $h$ is a fixed non-degenerate symmetric
bilinear form (on $\mathbb{R}^{n+2}$) of signature $(p+1,q+1)$,
and define
$$
  \mathbb{W}_0:= \Lambda^2\mathbb{R}^{n+2};
$$
or \\
\noindent \hypertarget{c}{(c)} we work on an arbitrary conformal manifold $(M^n,\bc)$, view $\mathbb{R}^{n+2}$ as the defining representation for $G:=\SO(h)\cong \SO(p+1,q+1)$, where $h$ is a fixed non-degenerate symmetric
bilinear form (on $\mathbb{R}^{n+2}$) of signature $(p+1,q+1)$,
and define
$$
  \mathbb{W}_0:= \Lambda^3\mathbb{R}^{n+2}.
$$
Then we have:
\begin{thm}\label{fi-thm}
  Let $\mathbb{V}_1,\cdots ,\mathbb{V}_k$ be irreducible
  representation spaces of $G$,
  $\cV_i=\cG\times_P \mathbb{V}_i$, and $\cD^{\cV_i}$, $i\in
    \{1,\cdots ,k\}$ the corresponding respective first BGG operators.

  For each $i\in \{1,\cdots ,k\}$, suppose that $\si_i$ is a normal
  solution to the first BGG equation
  \begin{equation}\label{BGGi}
    \cD^{\cV_i} \si_i =0 ,
  \end{equation}
  and $m_i\in\mathbb{Z}_{\geq 0}$.
  Then for each copy of the trivial $G$-representation $\mathbb{R}$ in
  \begin{equation}\label{prod}
    (\odot^{m_0} \mathbb{W}_0 )  \otimes (\odot^{m_1} \mathbb{V}_1)\otimes\cdots \otimes (\odot^{m_k}\mathbb{V}_k)
  \end{equation}
  there is a corresponding distinguished curve first integral.
\end{thm}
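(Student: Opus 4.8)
The plan is to reduce the statement to the elementary fact that the pairing of two sections, each parallel along $\gamma$, is constant along $\gamma$; the real work is to recognise that the three settings (p), (n), and (c) share a common structure, so that a single argument covers all of them. In each case the irreducible representation $\mathbb{W}_0$ is an exterior power of the defining representation, so the associated bundle $\cW_0 := \cG \times_P \mathbb{W}_0$ is $\Lambda^2 \cT$ (cases (p), (n)) or $\Lambda^3 \cT$ (case (c)); and Theorems \ref{main-p}, \ref{main-nullc}, and \ref{main-c} assert precisely that a distinguished curve $\gamma$ carries a $2$- or $3$-tractor $\Sigma \in \Gamma(\cW_0|_\gamma)$ that is parallel along $\gamma$. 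Thus in all three settings $\Sigma$ is a section of $\cW_0|_\gamma$, parallel along $\gamma$, and I may treat the cases simultaneously.

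First I would invoke Theorem \ref{normp}: each normal solution $\si_i$ of the first BGG equation \eqref{BGGi} is, by definition, the image under the projection $\Pi$ of a tractor $S_i := L^{\cV_i}(\si_i) \in \Gamma(\cV_i)$ that is parallel for the tractor connection, hence in particular parallel along $\gamma$. I would then assemble the single section
$$
  T := (\odot^{m_0} \Sigma) \otimes (\odot^{m_1} S_1) \otimes \cdots \otimes (\odot^{m_k} S_k)
$$
of the tractor bundle $\mc{U}$ associated with the representation \eqref{prod}, restricted to $\gamma$. Since every factor is parallel along $\gamma$, the Leibniz rule shows that $T$ itself is parallel along $\gamma$.

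The remaining step converts a copy of the trivial $G$-representation $\mathbb{R}$ inside \eqref{prod} into a scalar. By complete reducibility of the finite-dimensional representations of the semisimple group $G$, such a copy singles out a $G$-equivariant projection onto that summand, equivalently a $G$-invariant functional $\Phi$ on \eqref{prod}. Being $G$-invariant, $\Phi$ is annihilated by the infinitesimal action $\rho'(\g)$, so the constant equivariant function representing $\Phi$ satisfies \eqref{par-lift-conn} and hence defines a \emph{parallel} section of the dual bundle $\mc{U}^*$. Setting $f := \Phi(T)$, for any velocity $u$ along $\gamma$ the Leibniz rule gives $u^a \na_a f = \Phi(u^a \na_a T) = 0$, so $f$ is constant along $\gamma$. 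Because every tractor bundle occurring here is associated with a $G$-representation, and so of weight zero, $f$ is a genuine parametrisation-independent function on $\gamma$ --- the desired first integral.

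I expect no deep obstacle, as the argument is formal once the preceding results are granted. The points needing the most care are the two bridges between representation theory and parallel transport: that $\Sigma$ is parallel along $\gamma$ as a section of $\cW_0|_\gamma$ (uniformly across the three cases, from the three main theorems), and that a trivial summand of \eqref{prod} yields a parallel section of $\mc{U}^*$ via \eqref{par-lift-conn}. Extracting the explicit formulae advertised in the introduction is then a separate, purely computational matter: one fixes a connection in $\bp$ or a metric in $\bc$, expresses $\Sigma$, the $S_i$, and $\Phi$ in the resulting splitting using \eqref{trids} or \eqref{ctrids}, and evaluates $\Phi(T)$.
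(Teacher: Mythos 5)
Your proposal is correct and follows essentially the paper's own argument: the paper's primary proof phrases everything in terms of $P$-equivariant functions on $\cG$ satisfying \eqref{par-lift-conn}, and then immediately records, as an ``equivalent way to prove the Theorem,'' exactly your formulation --- the trivial summand yields a globally parallel tractor $T$ in the dual bundle, which pairs with the parallel section $(\odot^{m_0}\Sigma)\otimes(\odot^{m_1}S_1)\otimes\cdots\otimes(\odot^{m_k}S_k)$ to give the conserved quantity \eqref{Teq}. All the essential ingredients (parallelism of $\Sigma$ from Theorems \ref{main-p}, \ref{main-nullc}, \ref{main-c}, parallelism of $S_i=L(\si_i)$ from normality via Theorem \ref{normp}, and the Leibniz rule) match the paper's.
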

\noindent Note {\em distinguished curve} here means: unparametrised
geodesic in the setting \hyperlink{p}{(p)}; or unparametrised null geodesic in the
setting \hyperlink{n}{(n)}; or unparametrised conformal circle in the setting \hyperlink{c}{(c)}. In
the following proof of the Theorem $P\subset G$ is a parabolic subgroup
in each case as defined at the beginning of Section \ref{BGGsec}.

\smallskip

\noindent {\em Proof of Theorem \ref{fi-thm}.}
Let $\mathbb{V}_1,\cdots ,\mathbb{V}_k$ and $m_0,\cdots, m_k$ be as in the statement of the Theorem.
To each copy of the trivial $G$-representation $\mathbb{R}$ in
\eqref{prod} there is, in particular, a $G$-epimorphism
\begin{equation}\label{epi}
  \phi: (\odot^{m_0} \mathbb{W}_0 )  \otimes (\odot^{m_1} \mathbb{V}_1)\otimes\cdots \otimes (\odot^{m_k}\mathbb{V}_k)
  \to \mathbb{R},
\end{equation}
where $G$ acts trivially on $\mathbb{R}$.
Let us fix such a map. Each normal solution $\si_i$ ($i=1,\cdots ,k$), to the equations \eqref{BGGi} (on the manifold $(M,\bp)$ in the projective setting \hyperlink{p}{(p)}, or on $(M,\bc)$ for either of the conformal settings \hyperlink{c}{(c)} or \hyperlink{n}{(n)})
is equivalent to a parallel tractor field
$
  L(\si_i)=S_i \in \Gamma(\cG\times_P \mathbb{V}_i)
$, where $L$ is the BGG splitting operator introduced in Theorem \ref{normp}.
Moreover, as discussed above, each of these is equivalent to a $P$-equivariant function
$$
  s_i:\cG\to \mathbb{V}_i, \qquad i \in\{1,\cdots, k\}
$$
that satisfies equation \eqref{par-lift-conn} with $t=s_i$, $\rho'=\rho_i$  the representation of $\g$ on $\V_i$ and $\overline{\xi}$ is the lift of any vector field $\xi$ on $M$ to $\mc{G}$.

On the other hand, according to Theorem \ref{main-p}, Theorem
\ref{main-nullc}, and Theorem \ref{main-c}, along any distinguished
curve $\gamma$ there is a (characterising) parallel tractor $\Sigma\in
  \Gamma(\cG\times_P\mathbb{W}_0)$. On the restriction of $\cG$ that
lies over the trace of $\gamma$ we have that $\Sigma$ is equivalent to
a $P$-equivariant function $s_0:\cG\to \mathbb{W}_0$ that satisfies
equation \eqref{par-lift-conn} with $t=s_0$, $\rho'=\rho_0$ the
representation of $\g$ on $\mathbb{W}_0$, and $\overline{\xi}$ is the
lift of a vector field $\xi$ everywhere tangent to $\gamma$.

Now we form the function
$$
  F=(\odot^{m_0}s_0) \otimes (\odot^{m_1}s_1)\otimes\cdots \otimes (\odot^{m_1}s_1):\cG\to (\odot^{m_0} \mathbb{W}_0 )  \otimes (\odot^{m_1} \mathbb{V}_1)\otimes\cdots \otimes (\odot^{m_k}\mathbb{V}_k).
$$
This is clearly $P$-equivariant, and, by the Leibniz property of the
tractor connection, satisfies \eqref{par-lift-conn} with $t=F$, $\rho'$  the representation of $\g$ on $\mathbb{W}=\mathbb{R}$, and $\overline{\xi}$ is the lift of a vector field $\xi$  everywhere tangent  to $\gamma$. The composition $\phi\circ F$ is then by construction $P$-invariant, and so descends to a function on the trace of $\gamma$. Moreover it is constant along $\gamma$ as for any lift $\overline{\xi}$ of a vector field $\xi$ tangent to $\gamma$, we have
\begin{equation} \label{conserved-cartan}
  \overline{\xi}\cdot \phi(F)= \phi(\overline{\xi}\cdot F)= \phi(\overline{\xi}\cdot F + \rho(\om (\overline{\xi}))F )=0 ,
\end{equation}
where we have used that $\phi$ is simply a fixed linear homomorphism on
the values of $F$ that intertwines the product \eqref{prod} with the trivial representation. \hfill $\Box$

There is  an equivalent  way to prove the Theorem which introduces another object, but
which is useful for applying the Theorem. Since the Cartan connection is $\mathfrak{g}$-valued
it follows easily that any $G$-epimorphism $\phi$, as in \eqref{epi}, determines a
corresponding parallel tractor $T$ field taking values in
$$
  (\otimes^{m_0} \cW_0^*)\otimes  ( \otimes^{m_1} \cV_1^*) \otimes\cdots \otimes( \otimes^{m_k} \cV_k^*)
$$
where $\cW^*_0:=\cG\times_P \mathbb{W}^*_0$, and otherwise we continue the notation above.
The quantity
\begin{equation}\label{Teq}
  T(\odot^{m_0}\Sigma,\odot^{m_1}S_1,\cdots ,\odot^{m_k}S_k)
\end{equation}
is thus constant along any unparametrised geodesic $\gamma$, and this is the first integral.
\begin{rem}
  There is no claim that different $G$-homomorphisms \eqref{epi}
  necessarily yield functionally independent first integrals. Indeed
  for any case where $m_0=0$ the proof goes through without assuming
  $\gamma$ is a distinguished curve, and we thus conclude that any
  $G$-homomorphism \eqref{epi} determines a first integral for \underline{all curves}; these are all functionally equivalent and trivial
  as first integrals. It is easy to understand. In each such case the
  quantity $T(\odot^{m_1}S_1,\cdots ,\odot^{m_k}S_k)$ is constant on
  $M$, and thus, if non-zero, may without loss of generality be taken to be the constant function 1.
\end{rem}

  \subsection{A general procedure for proliferating examples for conformal circles} \label{gen_proc}
  
We illustrate here that finding bundles $\cV_i$, as in Theorem
\ref{fi-thm}, that can yield non-trivial conserved quantities is not
difficult. We show that for each $G$-irreducible part of $S^{m_0}
\mathbb{W}_0$, that corresponds to a non-trvial part of $S^{m_0}
\Sigma$, one can associate such a bundle (and hence BGG equation and
solution). We should emphasise that this is just one choice of bundle,
$\cV_i$ and typically there are many more possibilites.

In what follows, we use the term \emph{non-trivial irreducible part} to
mean a tensor part of $S^{m_0} \Sigma$ that is in general not zero and which arises from a $G$-irreducible part of $S^{m_0}
\mathbb{W}_0$ that is not 1-dimensional (i.e. is not a ``complete contraction'').

\begin{thm}\label{genid}
  Fix $M$ to be the model for conformal geometry, i.e. the sphere
  $S^n$ equipped with the conformal structure induced by the round
  metric.  Let $\gamma$ be a conformal geodesic on $M$ characterised
  by the 3-tractor $\Sigma^{ABC}$.  Then for each $m_0 \in \mathbb{N}$
  and each nontrivial non-trivial irreducible part of $S^{m_0} \Sigma$, there is
  a corresponding nontrivial conserved quantity.
\end{thm}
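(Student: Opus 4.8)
The plan is to realise the asserted conserved quantity as a single instance of Theorem \ref{fi-thm} with $k=1$, and then to establish its non-triviality by a representation-theoretic argument exploiting homogeneity and the flatness of the model. First I would fix the given $G$-irreducible component $\mathbb{V} \subset \odot^{m_0} \mathbb{W}_0$, where $\mathbb{W}_0 = \Lambda^3 \R^{n+2}$, together with the $G$-equivariant projection $\pi_\mathbb{V} \colon \odot^{m_0} \mathbb{W}_0 \to \mathbb{V}$; by hypothesis this part is non-trivial, so $\mathbb{V}$ is a non-trivial irreducible representation (in particular $\dim \mathbb{V} > 1$). Set $\cV := \cG \times_P \mathbb{V}$ and take $\mathbb{V}_1 := \mathbb{V}^*$ with $m_1 := 1$. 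Since $\mathbb{V}$ occurs in $\odot^{m_0}\mathbb{W}_0$, the trivial representation occurs in $(\odot^{m_0}\mathbb{W}_0) \otimes \mathbb{V}^*$, and I would take the $G$-epimorphism $\phi$ of \eqref{epi} to be $\pi_\mathbb{V}$ followed by the canonical pairing $\mathbb{V} \otimes \mathbb{V}^* \to \R$.

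The next step is to supply the normal solution required by Theorem \ref{fi-thm}. Because $M = S^n$ is conformally flat, its tractor curvature \eqref{ctract-curv} vanishes, so by the discussion following Theorem \ref{normp} every solution of the first BGG equation $\cD^{\cV^*} \sigma = 0$ is normal, and these normal solutions are in bijection with the parallel sections of $\cV^*$. On the flat model such sections form a $\dim \mathbb{V}$-parameter family, parametrised by $w \in \mathbb{V}^*$ via the $G$-equivariant function $g \mapsto \rho^*(g^{-1}) w$ on $\cG = G$. Feeding such a solution into Theorem \ref{fi-thm} together with the everywhere-parallel $3$-tractor $\Sigma = \Sigma_\gamma$ of Theorem \ref{main-c}, and unwinding the invariant \eqref{Teq}, the resulting first integral is the function $I_w(\gamma) = \langle \pi_\mathbb{V}(\odot^{m_0} \Sigma_\gamma), w \rangle$. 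This is automatically constant along $\gamma$; the remaining task is to choose $w$ so that $I_w$ is not merely a universal constant (the trivial case flagged in the Remark after Theorem \ref{fi-thm}).

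The non-triviality is the crux, and I would argue it as follows. Recall from Section \ref{circ-model} that $G = \SO(n+1,1)$ acts transitively on the space of oriented conformal circles, realised as an orbit in $\mathbb{P}_+ \T^{+,+,-}$; the induced linear action on $\Lambda^3 \R^{n+2}$ preserves $|\Sigma|^2$ and so acts transitively on the normalised set $\mathcal{N} := \{ \Sigma \in \T^{+,+,-} : |\Sigma|^2 = -1 \}$ of $3$-tractors attached to these circles. The map $\Sigma \mapsto \pi_\mathbb{V}(\odot^{m_0} \Sigma)$ is homogeneous of degree $m_0 \geq 1$ and, by the definition of a non-trivial part, not identically zero, so I may fix a base circle $\gamma_0$ with normalised tractor $\Sigma_0 \in \mathcal{N}$ and $v := \pi_\mathbb{V}(\odot^{m_0}\Sigma_0) \neq 0$. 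Along the orbit one then has $I_w(g \cdot \gamma_0) = \langle g \cdot v, w \rangle$, a matrix coefficient of $\mathbb{V}$. Were this constant in $g$, then for every $h \in G$ we would have $\langle g v, w \rangle = \langle hg v, w \rangle = \langle g v, h^{-1}\cdot w \rangle$ for all $g$; since $v \neq 0$ and $\mathbb{V}$ is irreducible the vectors $g v$ span $\mathbb{V}$, forcing $h^{-1}\cdot w = w$ for all $h$, i.e.\ $w$ a $G$-invariant element of the non-trivial irreducible $\mathbb{V}^*$, whence $w = 0$. Therefore any $w \neq 0$ with $\langle v, w \rangle \neq 0$ yields an $I_w$ that is non-constant on the space of conformal circles, which is the desired non-trivial conserved quantity. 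The main obstacle is exactly this last step: showing that the quantity genuinely separates curves rather than collapsing to a constant. Homogeneity of degree $m_0 \geq 1$ excludes non-zero constants, the matrix-coefficient argument excludes the constant $0$ away from the degenerate choices of $w$, and the whole step rests on the transitivity of $G$ on conformal circles together with the irreducibility of $\mathbb{V}$.
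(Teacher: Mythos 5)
Your construction of the conserved quantity coincides with the paper's: both pair the chosen $G$-irreducible part of $\odot^{m_0}\Sigma$ with a parallel section of the dual bundle (which on the flat model exists in a $\dim\mathbb{V}$-parameter family), and conservation follows from \eqref{conserved-cartan} exactly as in Theorem \ref{fi-thm}. Where you genuinely diverge is in the non-triviality step. The paper argues pointwise: it fixes a point $x$ on $\gamma$, chooses $t\in\mathbb{B}^*$ pairing non-trivially with the value $s(p)$ of the irreducible part over $x$, extends $t$ by parallel transport, and declares the resulting first integral non-trivial ``by construction'' because it is non-zero on $\gamma$. You instead exploit the transitivity of $G$ on the normalised $3$-tractors of conformal circles together with the irreducibility of $\mathbb{V}$: your matrix-coefficient argument shows that for any $w\neq 0$ the function $I_w$ is non-constant on the orbit of circles, since constancy would force $w$ to be a $G$-invariant vector in the non-trivial irreducible $\mathbb{V}^*$, hence zero. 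This buys a strictly stronger conclusion --- the quantity genuinely separates conformal circles rather than merely being non-vanishing on one of them --- which is what the paper's surrounding discussion (``this polynomial is exactly a constraint on the space of conformal circles'') implicitly requires but does not spell out. The paper's pointwise argument, on the other hand, is the one that transfers verbatim to curved conformal manifolds admitting a non-zero parallel section of the relevant dual bundle, as noted at the end of Section \ref{gen_proc}. Both arguments are correct; yours fills in a detail the paper leaves implicit.
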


\begin{proof}
  Suppose $\mathcal{B}$ is the bundle associated with a nontrivial
  $G$-irreducible part of $S^{m_0} \mathbb{W}_0$, and
  $S\in\Gamma(\mathcal{B})$ is a corresponding nontrivial irreducible
  part of $S^{m_0} \Sigma$.  Choose some point $x\in\gamma$, and let
  $p\in \pi^{-1}(x)$, where $\pi : \mathcal{B} \to M$.  Recall $S$ is
  given by some $P$-equivariant function $s : \cG \to \mathbb{B}$,
  where $\mathbb{B}$ is a $G$-representation and $\mathcal{B} =
  \mathcal{G} \times_P \mathbb{B}$.  Hence $s(p) \in \mathbb{B}$.  At
  the point $p$, one may find an element $t \in \mathbb{B}^*$ such
  that $\langle s(p), t \rangle \neq 0$, where $\langle \cdot , \cdot
  \rangle$ denotes the dual pairing between the representations
  $\mathbb{B}$ and $\mathbb{B}^*$.  This element $t$ of the dual
  representation may be extended to an equivariant function on the
  fibre, i.e. an element of $\mathcal{B}|_x$, and this function may be
  further extended by parallel transport to a section $T$ of
  $\mathcal{B}$.  Then by (\ref{conserved-cartan}), $S\cdot T$ is
  constant along any conformal circle $\gamma$, and is, by
  construction, nontrivial.
  
\end{proof}

Each such conserved quantity $S\cdot T$ is, by construction, a
polynomial in the velocity and acceleration of the conformal circle.
In the terminology of Section \ref{circ-model}, this is a constraint
on the space $\tilde{\mathbb{T}}$ of simple 3-tractors $\Sigma^{ABC}$.
But on the model, such 3-tractors are in bijective correspondence with
conformal circles, and so this polynomial is exactly a constraint on
the space of conformal circles.

Finally we point out that although we have discussed here the model,
it follows that in general a non-zero parallel section of the bundle
$\mathcal{B}^*$ (with this bundle the as in the proof of Theorem \ref{genid}
above, but now on any conformal manifold $(M,\cc)$) determines a
non-trivial first integral of conformal circles.

\subsection{A digression on notation and Young symmetries}\label{young-sec}

Given a vector space $\mathbb{V}$ and $s,t\in \mathbb{Z}_{\geq 1}$ we will write
$$\mathbb{V}^{(s,s,\cdots ,s)}\subset
  \otimes^t(\odot^s\mathbb{V})
$$ to be the subspace of tensors in $ \otimes^t(\odot^s\mathbb{V})$
that vanish upon symmetrisation over any $s+1$ indices, in the sense
of abstract indices. This subspace is an irreducible component with
respect to the group $\GL(\mathbb{V})$ acting in the standard way on
$\otimes^{st}\mathbb{V} $ (and $\mathbb{V}^{(s,s,\cdots ,s)}$ is the
image of a Young projector on $\otimes^{st}\mathbb{V} $)
\cite{Fulton-Harris,Penrose-Rindler-v1}. This notation is also used in
\cite{Go-Leist} where there is further discussion.  We write
$\mathbb{V}_{(s,s,\cdots ,s)}\subset \otimes^t(\odot^s\mathbb{V}^*) $
for the dual tensor space, with the same symmetries but now
constructed using the vector space $\mathbb{V}^*$ dual to
$\mathbb{V}$.

Similarly given the same  vector space $\mathbb{V}$ we write
$$\mathbb{V}^{[s,s,\cdots ,s]}\subset
  \odot^s(\Lambda^t\mathbb{V})
$$
to be the subspace of tensors in $\odot^s(\Lambda^t\mathbb{V} )$
that vanish upon alternation over any $t+1$ indices.
This subspace also is an irreducible component with
respect to the group $\GL(\mathbb{V})$ acting in the standard way on
$\otimes^{st}\mathbb{V} $. In fact it is well known that there is an $\GL(\mathbb{V})$-isomorphism
$$
  \mathbb{V}^{[s,s,\cdots ,s]} \cong \mathbb{V}^{(s,s,\cdots ,s)}
$$ (and $\mathbb{V}^{[s,s,\cdots ,s]}$ is the image of another Young
projector on $\otimes^{st}\mathbb{V} $ that simply gives a different
realisation of the same representation). We write
$\mathbb{V}_{[s,s,\cdots ,s]}\subset \odot^s(\Lambda^t\mathbb{V}^*) $
for the dual tensor space, again constructed the same way but starting now with $\mathbb{V}^*$.

We will carry these notations onto tractor bundles in the obvious way. So in the setting of either projective or conformal tractors
$$
  \cT^{[s,s,\cdots ,s]},
$$
for example, will mean the subbundle of $\odot^s(\Lambda^t \cT)$ with fibre
$(\cT_x)^{[s,s,\cdots ,s]}$ at any $x\in M$.

\subsection{The first integrals of affine geodesics and projective curves}\label{pfi}

Recall that in this case we view $\mathbb{R}^{n+1}$ as the defining representation
for $G=\SL(\mathbb{R}^{n+1})\cong \SL(n+1,\mathbb{R})$ and
$$
  \mathbb{W}_0:= \Lambda^2\mathbb{R}^{n+1}
$$ as an irreducible $G$-representation space.

Since affine connections determine a projective structure it suffices to study the invariants on any
projective manifold $(M^n,\bp)$ and write
$\cG$ for the projective Cartan geometry modelled on $(G,P)$ as discussed above.

Using Weyl's invariant theory \cite{Weyl-inv} we know that $\phi$ is
determined by the volume form on $\mathbb{R}^{n+1}$, as preserved by
$G=\SL(\mathbb{R}^{n+1})$, and traces. Equivalently in any example the
formula for $T$ is constructed using the tractor volume form, its dual, and
the identity $\delta^A_B$.

Thus in summary and informally the construction of first integrals is as
follows.  Any normal solution of a first BGG equation provides (and is
equivalent to) a parallel tractor field. Given any collection of
parallel tractor fields, including the tractor volume form and its
tensor powers, we form first integrals by simply contracting these
into tensor powers of $\Sigma$. We construct some simple examples as follows.

\subsubsection{The classical first integrals -- Killing tensors}\label{class}

As mentioned above Killing tensors provide first integrals along
geodesics. This is simply because the velocity $u$ of an affinely parametrised
geodesic satisfies $\nabla_uu=0$ and hence for any Killing tensor
$k_{b\cdots c}$ the quantity $ u^b\cdots u^ck_{b\cdots c} $ is
constant along the geodesic -- here, we view $k_{b\cdots c}$ as an unweighted tensor.

This is recovered from the Theorem \ref{fi-thm} as follows. The representation and corresponding tractor bundle for Killing tensors can be read off from standard representation theory as discussed in \cite{BCEG}.
In this case the BGG splitting operator $k\mapsto L(k)$ is a map
$$
  \Gamma(\odot^sT^*M(2s))\ni k_{b_1\cdots b_s} \mapsto  \mathbb{K}_{\alpha_1\cdots \alpha_s\beta_1\cdots \beta_s}\in \Gamma(\cT_{[s,s]})\subset  \Gamma(\otimes^{2s}\cT^*),
$$
where $ \mathbb{K}_{\alpha_1\cdots \alpha_s\beta_1\cdots \beta_s}$
is a (weight zero) tractor that is skew on each pair
$\alpha_i\beta_i$, $i=1,\cdots ,s$.
From the
sequence \eqref{euler} it follows easily that the map $\Pi$ (of
Theorem \ref{normp}) that gives a left inverse to $L$ is obtained (up
to multiplication by a non-zero constant) by contracting
$\mathbb{X}^{\alpha_i\beta_i}_{a_i}$, $i=1,\ldots ,s$, into
$\mathbb{K}_{\alpha_1\cdots \alpha_s\beta_1\cdots \beta_s}$ and, again using \eqref{euler},
it follows that
$$
  K_{\beta_1\cdots \beta_s}: = X^{\alpha_1}\cdots X^{\alpha_s} \mathbb{K}_{\alpha_1\cdots \alpha_s\beta_1\cdots \beta_s}
$$
satisfies
$$
  K_{\beta_1\cdots \beta_s}= \tilde{c}\cdot Z_{\beta_1}{}^{b_1} \cdots Z_{\beta_s}{}^{b_s} k_{b_1\cdots b_s},
$$
for some constant $\tilde{c}\neq 0$.
Thus along an unparametrised geodesic with weighted velocity $\mbf{u}^a$ we have
\begin{equation}\label{key-if}
  \Sigma^{\alpha_1\beta_1}\cdots \Sigma^{\alpha_s\beta_s}\mathbb{K}_{\alpha_1\cdots \alpha_s\beta_1\cdots \beta_s} = c\cdot \mbf{u}^{b_1}\cdots \mbf{u}^{b_s}k_{b_1\cdots b_s},
\end{equation}
for some constant $c\neq 0$.
Now according to the Theorem \ref{fi-thm}, if $k_{b_1\cdots b_s}$ is a
normal solution of the Killing equation \eqref{Kt} then the display
\eqref{key-if} is a first integral of unparametrised geodesics. In
this case we have recovered the well known quantity on the right hand side.

\begin{rem}\label{non-normal} There is an interesting observation here. Evidently
  \eqref{key-if} defines a first integral even if the solution is not
  necessarily normal, because \eqref{key-if} recovers the usual first integral associated to
  Killing tensors.  In the case where $k_{b_1\cdots b_s}$ is a solution
  of the Killing equation \eqref{Kt} but not necessarily normal then
  the tractor $\mathbb{K}$ is no longer parallel along the curve but
  rather $\nabla_u \mathbb{K}$ is given by some algebraic action of
  the tractor curvature and its derivatives on $\mathbb{K}$
  \cite{HSSS}. Evidently the contraction with
  $\Sigma^{\alpha_1\beta_1}\cdots \Sigma^{\alpha_s\beta_s}$
  annihilates these terms.  Using a slightly different splitting
  operator and prolongation procedure, an algorithm for explicitly
  computing these curvature terms was found recently in
  \cite{Go-Leist}. Using this it is easily seen explicitly that the
  given curvature terms are indeed annihilated by the contraction with the
  $X^{\alpha_1}\cdots X^{\alpha_s}$ implicit in the
  $\Sigma^{\alpha_1\beta_1}\cdots \Sigma^{\alpha_s\beta_s}$
  contraction. In light of the examples presented later in this paper,
  it seems likely that a similar argument will show that
  Theorem \ref{fi-thm} will extend to many cases of non-normal BGG
  solutions and also to solutions of other geometric equations that
  have the same leading symbol. This requires an extension of the
  programme initiated in \cite{Go-Leist} or a theory that establishes
  similar results.
\end{rem}

\subsubsection{The general case -- Killing tensors from BGG solutions}\label{truelyhidden}

The first integrals for geodesics found using Theorem \ref{BGGi} (with
the assumptions \hyperlink{p}{(p)}) are, by construction, polynomial on the fibres of
$TM$.  On the other hand it is a classical result that any first
integral of geodesics that is polynomial on the fibres of $TM$ is a
sum of a constant function and a finite number of ``classical first
integrals'' as in Section \ref{class} above. This is easily seen
directly for the geodesic first integrals \eqref{Teq} from the
Theorem \ref{BGGi}. We need first a preliminary fact.

\begin{prop}\label{BGGtoK-prop} On a manifold with a projective structure $(M,\bp)$
  let $Q= Q_{\alpha_1\beta_1\cdots \alpha_{m_0}\beta_{m_0}}$ be a
  parallel tractor field taking values in
  $
    \otimes^{m_0} \cW_0^*
  $.
  Then
  \begin{align}\label{captKill}
    k_{a_1\cdots a_{m_0}}:= \mathbb{X}_{(a_1}^{\alpha_1\beta_1}\cdots \mathbb{X}_{a_{m_0})}^{\alpha_{m_0}\beta_{m_0}}Q_{\alpha_1\beta_1\cdots \alpha_{m_0}\beta_{m_0}}
  \end{align}
  is a normal Killing tensor, i.e. a solution to \eqref{Kt} with
  $L(k)$ parallel for the normal tractor connection.
\end{prop}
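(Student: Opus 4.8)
The plan is to treat the two assertions—that $k$ satisfies the Killing equation \eqref{Kt}, and that it is normal—separately: the first by a direct tractor computation, the second by a representation-theoretic identification of $L(k)$ with the image of $Q$ under a parallel bundle map.

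First I would establish the Killing property by differentiating. Using the identities \eqref{trids} together with $X^{[\alpha}X^{\beta]}=0$, one computes
\[
\nabla_c \mathbb{X}^{\alpha\beta}_b = 2\,\nabla_c\!\left(X^{[\alpha}W^{\beta]}{}_b\right) = 2\,W^{[\alpha}{}_c\,W^{\beta]}{}_b ,
\]
the Schouten term dropping out. The crucial observation is that $W^{[\alpha}{}_c\,W^{\beta]}{}_b$ is antisymmetric under $b\leftrightarrow c$. Since $Q$ is parallel, the Leibniz rule writes $\nabla_{a_0}k_{a_1\cdots a_{m_0}}$ as a sum of $m_0$ terms, in the $i$-th of which $\nabla_{a_0}$ has landed on the $i$-th factor, producing $2\,W^{[\alpha_i}{}_{a_0}W^{\beta_i]}{}_{a_i}$. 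Symmetrising over all of $a_0,\dots,a_{m_0}$ symmetrises in particular over the pair $(a_0,a_i)$, in which this factor is skew, so every term vanishes and $\nabla_{(a_0}k_{a_1\cdots a_{m_0})}=0$.

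For normality I would exhibit $L(k)$ as the image of $Q$ under a parallel bundle map. Let $\mathbb{V}\subset\otimes^{m_0}\mathbb{W}_0^*$ be the irreducible $G$-summand of Young type $(m_0,m_0)$ carrying the Killing representation, $\cV=\cG\times_P\mathbb{V}=\cT_{[m_0,m_0]}$, and let $\mathrm{pr}\colon\otimes^{m_0}\cW_0^*\to\cV$ be the bundle map induced by the $G$-equivariant projection; being induced by a $G$-module map it intertwines the tractor connections, so $\mathrm{pr}(Q)$ is parallel. Writing $C$ for the (purely algebraic, $P$-equivariant) contraction $Q\mapsto \mathbb{X}^{\alpha_1\beta_1}_{(a_1}\cdots\mathbb{X}^{\alpha_{m_0}\beta_{m_0}}_{a_{m_0})}Q$ into $\cH_0=\odot^{m_0}T^*M(2m_0)$, we have $C(Q)=k$ by definition and $C|_\cV=\mathrm{const}\cdot\Pi$ by Section \ref{class}. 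I would then argue that the space of $P$-equivariant maps $\otimes^{m_0}\cW_0^*\to\cH_0$ is one-dimensional: since the nilradical $\mathfrak{p}_+$ acts trivially on $\cH_0$, any such map factors through the top associated-graded piece $\otimes^{m_0}B^*$, where $B^*=T^*M(2)$ is the top of $\cW_0^*$; and as a $G_0$-module map $\otimes^{m_0}B^*\to\odot^{m_0}B^*=\cH_0$ it is determined up to scale because $\odot^{m_0}B^*$ occurs in $\otimes^{m_0}B^*$ with multiplicity one. Hence the nonzero maps $C$ and $\Pi\circ\mathrm{pr}$ are proportional, giving $\Pi(\mathrm{pr}(Q))=\lambda\, k$ with $\lambda\neq0$. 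By Theorem \ref{normp}, $\Pi$ carries the parallel section $\mathrm{pr}(Q)$ to a normal solution, so $k=\lambda^{-1}\Pi(\mathrm{pr}(Q))$ is normal and $L(k)=\lambda^{-1}\mathrm{pr}(Q)$ is parallel.

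The main obstacle is the normality step, and within it the representation-theoretic bookkeeping: correctly identifying the Killing summand $\mathbb{V}=\cT_{[m_0,m_0]}$ and its top quotient $\cH_0$, tracking the density weights through $\mathbb{X}$ (so that $B^*=T^*M(2)$ and $\cH_0=\odot^{m_0}T^*M(2m_0)$ match), and verifying the multiplicity-one statement that pins down $\mathrm{Hom}_P(\otimes^{m_0}\cW_0^*,\cH_0)$ as one-dimensional. The Killing-equation computation, by contrast, is short and self-contained, so I expect essentially all of the work to lie in the identification of $C$ with the BGG projection $\Pi$ of the parallel tractor $\mathrm{pr}(Q)$.
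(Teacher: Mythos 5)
Your proposal is correct, and although it terminates at the same place as the paper's proof --- namely that $L(k)$ is, up to a nonzero constant, the projection of $Q$ onto the irreducible summand $\cT_{[m_0,m_0]}$, which is parallel because that projection is induced by a $G$-module map, so that Theorem \ref{normp} applies --- the route to that identification is genuinely different. The paper's mechanism is concrete: for any $\mbf{u}^a\in\Gamma(TM(-2))$ the contracting object $\mbf{u}^{a_1}\cdots\mbf{u}^{a_{m_0}}\X_{a_1}^{\alpha_1\beta_1}\cdots\X_{a_{m_0}}^{\alpha_{m_0}\beta_{m_0}}$ is a symmetric power of simple $2$-tractors sharing the factor $X$, hence lies in $\cT^{[m_0,m_0]}$, so by orthogonality of distinct Young types the contraction \eqref{captKill} annihilates every component of $Q$ except $P_{[m_0,m_0]}(Q)$; the restriction of the contraction to $\cT_{[m_0,m_0]}$ is then identified with $\Pi$ exactly as in Section \ref{class}. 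You instead obtain the proportionality $C=\mathrm{const}\cdot\Pi\circ\mathrm{pr}$ abstractly, from the one-dimensionality of the space of invariant bundle maps $\otimes^{m_0}\cW_0^*\to\cH_0$; this is sound (such a map must kill $\mathfrak{p}_+\cdot(\otimes^{m_0}\mathbb{W}_0^*)$, and $\odot^{m_0}$ occurs with multiplicity one in the $m_0$-fold tensor power of the top slot $T^*M(2)$), and it buys independence from the explicit form of the contracting object, at the price of more representation-theoretic bookkeeping and of having to check separately that $C\neq 0$, which you rightly extract from Section \ref{class}. Your direct verification of the Killing equation \eqref{Kt}, via $\nabla_c\X_b^{\alpha\beta}=2\,W^{[\alpha}{}_c W^{\beta]}{}_b$ and the skewness of this expression in $b,c$, is also correct, but it is strictly redundant once normality is established, since a normal solution is in particular a solution of the first BGG equation on $\cH_0=\odot^{m_0}T^*M(2m_0)$, which is the (projectively weighted) Killing equation; it is nevertheless a worthwhile consistency check in the spirit of Remark \ref{non-normal}.
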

\begin{proof}
  Observe that for any section $\mbf{u}^a\in \Gamma(TM(-2))$ the tractor field
  \begin{equation}\label{prepol}
    \mbf{u}^{a_1}\cdots \mbf{u}^{a_{m_0}}\mathbb{X}_{a_1}^{\alpha_1\beta_1}\cdots \mathbb{X}_{a_{m_0}}^{\alpha_{m_0}\beta_{m_0}}
  \end{equation}
  takes value in $\odot^{m_0}(\Lambda^2\cT)$, but in any scale $\mbf{u}^a
    \mathbb{X}_{a}^{\alpha\beta}$ is simple: $\mbf{u}^a
    \mathbb{X}_a^{\alpha\beta}=2X^{[\alpha}U^{\beta]}$, where $U^\beta:=\mbf{u}^bW_b^B$ in the
  notation of Section \ref{proj-sect}. It follows at once  that skewing
  \eqref{prepol} over any three indices will annihilate it and so
  $$
    \mbf{u}^{a_1}\cdots \mbf{u}^{a_{m_0}}\mathbb{X}_{a_1}^{\alpha_1\beta_1}\cdots \mathbb{X}_{a_{m_0}}^{\alpha_{m_0}\beta_{m_0}}\in \cT^{[m_0,m_0]}.
  $$ Thus in the contraction in \eqref{captKill} nothing is changed if
  we replace $Q$ with $P_{[m_0,m_0]}(Q)$. Here $P_{[m_0,m_0]}$ is the
  natural projection from $\otimes^{m_0} \cW_0^*$ to
  $\cT_{[m_0,m_0]}$.
  But $P_{[m_0,m_0]}(Q)$ is a parallel section of the irreducible
  tractor bundle $\cT_{[m_0,m_0]}$ and acting on this contraction with $
    \mathbb{X}_{(a_1}^{\alpha_1\beta_1}\ldots
    \mathbb{X}_{a_{m_0})}^{\alpha_{m_0}\beta_{m_0}}$ recovers (up to a
  non-zero constant multiple) the usual BGG projection $\Pi$, as follows
  easily form the composition series \eqref{euler}. Since $L$ is the
  splitting operator $L(k)= P_{(m_0,m_0)}(Q)$, the result follows from
  Theorem \ref{normp}.
\end{proof}

The use of this is as follows. Suppose that on a projective manifold
we have normal first BGG solutions $\si_i$, $i=1,\cdots, k$, and a
homomorphism $\phi$ as in \eqref{epi}. Then we have the corresponding
parallel tractors $S_i$, $i=1,\cdots, k$, and $T$ (as in \eqref{Teq}), and
$$
  Q:= T(\cdot ,\odot^{m_1}S_1,\cdots ,\odot^{m_k}S_k)
$$ is a parallel tractor on $M$ taking values in $\otimes^{m_0}
  \cW_0^*$. Thus from Proposition \ref{BGGtoK-prop} we obtain a
corresponding normal Killing tensor and this is non-trivial if and
only if the first integral \nn{Teq} is non-trivial.

\subsubsection{A sample BGG equation} \label{case-BGG2}

We illustrate the above with a simple case that also reveals a  further result.
On projective densities $\tau\in
  \Gamma(\ce(2))$ the first projective BGG equation is
\begin{equation} \label{3rd}
  \nabla_{(a} \nabla_{b} \nabla_{c)} \tau + 4 \Rho_{(ab} \nabla_{c)} \tau + 2 \tau \nabla_{(a} \Rho_{bc)} = 0.
\end{equation}
This equation and its importance is discussed in some detail in
e.g.\ \cite{CG-proj-Ein,CGH-jlms,MaGo-Mat}.
In this case the BGG splitting operator is a second-order differential operator $L:\ce(2)\to \ce_{(\alpha \beta)}$ given by
\begin{equation}\label{3rd-split}
  \tau\mapsto L(\tau)=\frac{1}{2}D_{\alpha}D_\beta \tau \, ,
\end{equation}
(cf.\ \cite[Section 3.3]{CG-proj-Ein}) where $D_\alpha : \mc{E}(w) \rightarrow \mc{E}(w-1)$ is the projectively invariant Thomas-D differential operator on weighted tractors defined by $D_\alpha \sigma = w Y_\alpha \sigma + Z_\alpha {}^a \nabla_a \sigma$ (with $\nabla $ the coupling of the tractor connection with the affine connection corresponding to the splitting).
For convenience let us write $H_{\alpha\beta}:=\frac{1}{2}D_{\alpha}D_\beta \tau $.

Thus normal solutions to \eqref{3rd} correspond to $H=L(\tau)$
parallel and in this case it follows at once from \eqref{fi-thm} that
\begin{equation}\label{fi-3rd}
  \Sigma^{\alpha_1\beta_1}\Sigma^{\alpha_2\beta_2}H_{\alpha_1\alpha_2}H_{\beta_1\beta_2}
\end{equation}
is a first integral for unparametrised geodesics. It is
straightforward to see this is not trivial in general. In fact
$L(\tau)$ can be definite; this is exactly the case of there being
a Levi-Civita connection in the projective class $\nabla^g\in \bp$
where $g$ is a definite signature Einstein metric
\cite{ArmstrongI,ArmstrongII,CGMacbeth-Ein}.
Thus
$$
  k_{ab}:=\X_{(a}^{\alpha_1\beta_1}\X_{b)}^{\alpha_2\beta_2}H_{\alpha_1\alpha_2}H_{\beta_1\beta_2}
$$
is in general a non-trivial normal Killing tensor.

Now the sup rising aspect is that, as for the case of Killing tensors
(see Remark \ref{non-normal}), a stronger result is
available. Normality is not required, it is sufficient that $\tau$
solve \eqref{3rd}:
\begin{thm}\label{3rd-thm}
  Suppose that \( \tau \in \Gamma(\mathcal{E}(2) )\) solves the third-order equation \eqref{3rd}. Then with $H:=L(\tau)$, as in \eqref{3rd-split}, the quantity
  \begin{align}\label{cons-qty}
    \Sigma^{\alpha_1\beta_1}\Sigma^{\alpha_2\beta_2}H_{\alpha_1\alpha_2}H_{\beta_1\beta_2}
  \end{align}
  is a first integral along unparametrised geodesics, where $\Sigma$ is as in Theorem \ref{main-p}. Moreover
  \begin{align}\label{eq-Killing-tau}
    \Gamma(\odot^2 T^*M(4)) \ni k_{bc}:=\tau \, \nabla_b \nabla_c \tau + 2 \, \Rho_{bc} \tau^2 - \frac{1}{2} \left( \nabla_b \tau \right)\left( \nabla_c \tau \right)
  \end{align}  is a Killing tensor, in that it satisfies the equation \eqref{Kt}.
\end{thm}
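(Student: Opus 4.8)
The plan is to evaluate the scalar \eqref{cons-qty} explicitly in a chosen splitting, show it equals $\mbf{u}^a \mbf{u}^b k_{ab}$ with $k_{ab}$ the tensor in \eqref{eq-Killing-tau}, and then verify directly that $k_{ab}$ satisfies the Killing equation \eqref{Kt} as a consequence of \eqref{3rd}; both displayed assertions follow from these two facts. Crucially, this route never invokes normality of $\tau$: the reduction of \eqref{cons-qty} to $\mbf{u}^a\mbf{u}^b k_{ab}$ uses only the definition $H=L(\tau)=\tfrac12 D_\alpha D_\beta\tau$ from \eqref{3rd-split} and the fact that, by Theorem \ref{main-p}, $\Sigma$ is the simple bitractor $\Sigma^{\alpha\beta}=\bX^{\alpha\beta}_b\mbf{u}^b=2X^{[\alpha}U^{\beta]}$ attached to the geodesic, where I write $U^\beta:=W^\beta{}_b\mbf{u}^b$.

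First I would exploit the simplicity of $\Sigma$. Writing $\Sigma^{\alpha\beta}=X^\alpha U^\beta-U^\alpha X^\beta$ and using the symmetry $H_{\alpha\beta}=H_{\beta\alpha}$, a short algebraic manipulation collapses \eqref{cons-qty} to the $2\times 2$ determinant
\begin{equation*}
  \Sigma^{\alpha_1\beta_1}\Sigma^{\alpha_2\beta_2}H_{\alpha_1\alpha_2}H_{\beta_1\beta_2}=2\left(H_{XX}H_{UU}-H_{XU}^2\right),
\end{equation*}
where $H_{XX}:=X^\alpha X^\beta H_{\alpha\beta}$, $H_{XU}:=X^\alpha U^\beta H_{\alpha\beta}$ and $H_{UU}:=U^\alpha U^\beta H_{\alpha\beta}$. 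This already exposes the mechanism behind the non-normal strengthening: the expression sees $H$ only through its three $X$- and $U$-contracted components, which are precisely the components insensitive to the curvature terms that obstruct $H$ from being parallel.

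Next I would evaluate these components. Using $X^\alpha D_\alpha=w$ on a weight-$w$ tractor together with the pairings following \eqref{split}, one finds $X^\alpha H_{\alpha\beta}=\tfrac12 D_\beta\tau$, whence $H_{XX}=\tau$ and $H_{XU}=\tfrac12\mbf{u}^b\nabla_b\tau$. For $H_{UU}$ I would expand $H_{\alpha\beta}=\tfrac12 D_\alpha D_\beta\tau$ fully in the splitting via the identities \eqref{trids}; since $U^\alpha Y_\alpha=0$ and $U^\alpha Z_\alpha{}^a=\mbf{u}^a$, only the $Z_\alpha{}^aZ_\beta{}^b$ part survives, giving $H_{UU}=\mbf{u}^a\mbf{u}^b\left(\Rho_{ab}\tau+\tfrac12\nabla_a\nabla_b\tau\right)$. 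Substituting yields $2\left(H_{XX}H_{UU}-H_{XU}^2\right)=\mbf{u}^a\mbf{u}^b k_{ab}$ with $k_{ab}$ exactly as in \eqref{eq-Killing-tau}.

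Finally I would establish that $k$ is Killing by computing $\nabla_{(a}k_{bc)}$ directly. The terms quadratic in the first derivatives of $\tau$ — the piece $(\nabla_{(a}\tau)(\nabla_b\nabla_{c)}\tau)$ arising from differentiating $\tau\nabla_b\nabla_c\tau$, and the piece $-(\nabla_{(a}\nabla_b\tau)(\nabla_{c)}\tau)$ arising from $-\tfrac12(\nabla_b\tau)(\nabla_c\tau)$ — coincide after symmetrisation and cancel; what remains is exactly $\tau$ times the left-hand side of \eqref{3rd}, hence zero. This gives the Killing property \eqref{Kt}, and the first-integral claim then follows immediately: along an unparametrised geodesic $\mbf{u}^a\nabla_a\mbf{u}^b=0$, so $\mbf{u}^c\nabla_c\!\left(\mbf{u}^a\mbf{u}^b k_{ab}\right)=\mbf{u}^a\mbf{u}^b\mbf{u}^c\nabla_{(c}k_{ab)}=0$. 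The main obstacle is purely computational — organising the splitting expansion of $H_{UU}$ and tracking the symmetrised third-derivative and Schouten terms so that the collapse onto \eqref{3rd} is transparent; once the determinant reduction is in hand, no conceptual difficulty remains.
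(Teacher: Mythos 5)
Your proposal is correct and follows essentially the same route as the paper: an explicit evaluation in a splitting showing that \eqref{cons-qty} equals $\mbf{u}^a\mbf{u}^b k_{ab}$ with $k_{ab}$ as in \eqref{eq-Killing-tau}, followed by the observation that $\nabla_{(a}k_{bc)}$ reduces to $\tau$ times the left-hand side of \eqref{3rd} after the first-derivative cross-terms cancel under symmetrisation. The only differences are organisational — your $2\times2$ determinant reduction $2(H_{XX}H_{UU}-H_{XU}^2)$ packages the contraction that the paper carries out by brute force, and you establish the Killing property before the conservation law rather than after — but the computations and the logic are the same.
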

\begin{proof}
  We first calculate an explicit formula for \eqref{cons-qty}. Computing \eqref{3rd-split} yields
  \begin{align*}
    H_{\alpha_1 \alpha_2} = \tau Y_{\alpha_1} Y_{\alpha_2}
    + \nabla_{c} \tau Y_{(\alpha_1} Z_{\alpha_2)} {}^{c}
    + Z_{\alpha_1} {}^{a} Z_{\alpha_2} {}^{b} \left( \frac{1}{2} \nabla_a \nabla_b \tau + \Rho_{ab} \tau \right) \, .
  \end{align*}
  Then, we have
  \begin{equation*}
    \begin{split}
      \Sigma & {}^{\alpha_1\beta_1} H_{\alpha_1\alpha_2} \\
      &= \mbf{u}^{a} \left( X^{\alpha_1} W^{\beta_1} {}_{a} - X^{\beta_1} W^{\alpha_1} {}_{a}  \right)
      \left(Y_{\alpha_1} Y_{\alpha_2}  \tau
      + Y_{(\alpha_1} Z_{\alpha_2)} {}^{b} \nabla_{b} \tau
      + Z_{\alpha_1} {}^{b} Z_{\alpha_2} {}^{c} \left( \frac{1}{2} \nabla_b \nabla_c \tau + \Rho_{bc} \tau \right) \right) \\
      &=  \mbf{u}^{a} \tau \, W^{\beta_1} {}_{a} Y_{\alpha_2}
      + \frac{1}{2}  \mbf{u}^{a} \nabla_b \tau \, W^{\beta_1} {}_{a} Z_{\alpha_2} {}^{b}
      - \frac{1}{2}  \mbf{u}^{a} \nabla_a \tau \, X^{\beta_1} Y_{\alpha_2}
      -  \mbf{u}^{a} \left( \frac{1}{2} \nabla_{a} \nabla_{c} \tau
      + \Rho_{ac} \tau \right) X^{\beta_1} Z_{\alpha_2} {}^{c}.
    \end{split}
  \end{equation*}
  Contracting this section of $\End(\cT)$ with itself yields ($-1$ times):
  \begin{align} \label{eta}
    \eta := \Sigma^{\alpha_1 \beta_1} \Sigma^{\alpha_2 \beta_2} H_{\alpha_1 \alpha_2} H_{\beta_1 \beta_2} = \tau \, \mbf{u}^{a}  \mbf{u}^{b} \nabla_a \nabla_b \tau + 2 \, \mbf{u}^a  \mbf{u}^b \Rho_{ab} \tau^2 - \frac{1}{2} \left(  \mbf{u}^a \nabla_a \tau \right)^2.
  \end{align}
  Now differentiating
  \eqref{eta} along \( \gamma \), and using $ \mbf{u}^a\nabla_a  \mbf{u}^b=0$, we have:
  \begin{align*}
    \mbf{u}^{c} \nabla_{c} \eta & = (  \mbf{u}^c \nabla_c \tau )  \mbf{u}^a  \mbf{u}^b \nabla_a \nabla_b \tau + \tau  \mbf{u}^a  \mbf{u}^b  \mbf{u}^c \nabla_c \nabla_a \nabla_b \tau + 2 \, \mbf{u}^a  \mbf{u}^b  \mbf{u}^c (\nabla_c \Rho_{ab}) \tau^2 \\
                                & \qquad \qquad + 2 \, \mbf{u}^a  \mbf{u}^b  \mbf{u}^c \Rho_{ab} (\nabla_c \tau^2 )- ( \mbf{u}^a \nabla_a \tau)  \mbf{u}^a  \mbf{u}^c \nabla_c \nabla_a \tau                                                             \\
                                & = \tau \mbf{u}^a  \mbf{u}^b  \mbf{u}^c \left( \nabla_a \nabla_b \nabla_c \tau + 2 \, \tau \nabla_a \Rho_{bc} + 4 \, \Rho_{ab} \nabla_c \tau \right)                                                                    \\
                                & = 0,
  \end{align*}
  since \( \tau \) was assumed a solution of \eqref{3rd}. This calculation may also be viewed as the verification that $\nabla_{(a}k_{bc)}=0$.
  Indeed, from the definition \eqref{eq-Killing-tau}, we have
  \begin{align*}
    \nabla_{(a} k_{b c)} & =  \tau \left( \nabla_{(a} \nabla_b \nabla_{c)} \tau + 2 \, \tau \nabla_{(a} \Rho_{bc)} + 4 \, \Rho_{(ab} \nabla_{c)} \tau \right)    \, .
  \end{align*}
  Thus, $ \mbf{u}^{c} \nabla_{c} \eta  = \mbf{u}^a  \mbf{u}^b  \mbf{u}^c (\nabla_a k_{bc}) = 0$, and since this is true for any geodesic, we conclude that $\nabla_{(a} k_{b c)} =0$, i.e.\ $k_{ab}$ is a Killing tensor.
\end{proof}

\subsection{The first integrals of null geodesics}\label{nullfi}

On an indefinite signature pseudo-Riemannian manifold, or the conformal
structure $(M,\bc)$ that it determines, the Theorem \ref{fi-thm} uses
solutions of conformal first BGG equations to generate first
integrals along null geodesics. This is the setting \hyperlink{n}{(n)} for that Theorem so we
view $\mathbb{R}^{n+2}$ here as the defining representation for $G:=\SO(h)$, where $h$ is a fixed non-degenerate symmetric
bilinear form on $\mathbb{R}^{n+2}$ of signature $(p+1,q+1)$,
and define
$$
  \mathbb{W}_0:= \Lambda^2\mathbb{R}^{n+2}.
$$
The situation turns out to be closely analogous to that in Sections \ref{class} and \ref{truelyhidden}
above, so we shall be brief.

In this case $\phi$ is constructed from the bilinear form $h$ and the
compatible volume form on $\mathbb{R}^{n+2}$, as preserved by
$G=\SO(h)$, and traces. Equivalently, in any example the formula for $T$
is constructed using the tractor metric and its inverse, the tractor
volume form and the identity $\delta^A_B$.

The classical results surround primarily conformal Killing tensors,
i.e. solutions $k_{b_1\cdots b_s}$ of the first BGG equation
\eqref{cKt}.  As for the cases above the representation and
corresponding tractor bundle for conformal Killing tensors can be read
off from standard representation theory as discussed in
\cite{BCEG,CGH-jlms}.  The conformal splitting operator $k\mapsto
  L(k)$ in this case is a differential operator
$$
  \Gamma(\odot^sT^*M[2s])\ni k_{b_1\cdots b_s} \to  \mathbb{K}_{A_1\cdots A_sB_1\cdots B_s}\in
  \Gamma(\cT_{[s,s]_0})\subset  \Gamma(\otimes^{2s}\cT)
$$ where $\cT$ denotes the conformal standard tractor bundle, $
  \mathbb{K}_{A_1\cdots A_sB_1\cdots B_s}$ is a (weight zero) trace-free
tractor field that is skew on each pair $A_iB_i$, $i=1,\cdots ,s$. We
write $\cT_{[s,s]_0}$ to indicate the subbundle of $\cT_{[s,s] }$
consisting of tractors that are trace free (with respect to the
conformal tractor metric).

It is easily verified that for normal solutions of the conformal Killing
equation the standard first integral $u^{a_1}\cdots
  u^{a_s}k_{a_1\cdots a_s}$ arises from
\begin{equation}\label{class-cfi}
  \Sigma^{A_1B_1}\cdots \Sigma^{A_sB_s}\mathbb{K}_{A_1\cdots A_sB_1\cdots B_s}.
\end{equation}
Thus, in analogy with the observation in Remark \ref{non-normal}, it is
again the case that \eqref{class-cfi} is conserved along null
geodesics even if the solution $k$ to \eqref{cKt} is not normal,
i.e. $\mathbb{K}=L(k)$ is not parallel.

Also there is an analogue of Proposition \ref{BGGtoK-prop}:
\begin{prop}\label{BGGtoK-cprop} On a manifold with an indefinite  conformal structure $(M,\bc)$,
  let $Q= Q_{A_1B_1\cdots A_{m_0}B_{m_0}}$ be a
  parallel tractor field taking values in
  $
    \otimes^{m_0} \cW_0^*
  $.
  Then
  \begin{equation}\label{captcKill}
    k_{a_1\cdots a_{m_0}}:= \mathbb{X}_{(a_1}^{A_1B_1}\cdots \mathbb{X}_{a_{m_0})}^{A_{m_0}B_{m_0}}Q_{A_1B_1\cdots A_{m_0}B_{m_0}}
  \end{equation}
  is a normal conformal Killing tensor, i.e. a solution to \eqref{cKt} with
  $L(k)$ parallel for the normal conformal tractor connection.
\end{prop}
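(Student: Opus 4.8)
The plan is to mirror the proof of Proposition \ref{BGGtoK-prop}, the only genuinely new feature being the presence of the conformal tractor metric $h$, which forces us to keep track of traces. As there, I would first reduce the problem to an algebraic identification of $k$ with the conformal BGG projection $\Pi$ of a suitable parallel \emph{irreducible} tractor, after which normality follows at once from Theorem \ref{normp}.

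First I would record the relevant algebra of $\bX^{AB}_b = 2X^{[A}Z^{B]}{}_b$ from \eqref{injpc}. For any weighted field $\mbf{u}^a \in \Gamma(T M[-2])$ the tractor $\mbf{u}^a \bX^{AB}_a = 2 X^{[A}U^{B]}$, with $U^B := Z^B{}_a\mbf{u}^a$, is simple; hence the symmetrised product $\mbf{u}^{a_1}\cdots\mbf{u}^{a_{m_0}}\bX^{A_1B_1}_{a_1}\cdots\bX^{A_{m_0}B_{m_0}}_{a_{m_0}}$ is a symmetric power of the single decomposable $2$-tractor $X\wedge U$, and so is annihilated by skewing over any three tractor indices, i.e.\ it lies in $\cT^{[m_0,m_0]}$. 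Exactly as in the projective case this shows that, in the contraction \eqref{captcKill}, only the $\GL$-Young component $P_{[m_0,m_0]}(Q)$ of $Q$ can contribute.

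The new point is the behaviour under the tractor metric. Using that $X$ is null, $X^A Z_A{}^a = 0$ and $h_{AB}Z^A{}_a Z^B{}_b = \mbf{g}_{ab}$ (all from the identities following \eqref{Vsplit}), a trace taken between two of the upper tractor indices carried by two of the factors $\bX^{A_iB_i}_{a_i}$ collapses to a factor $\mbf{g}_{a_i a_j}$ times a product of $m_0-2$ further factors $\bX$. Consequently any non-trace-free piece of $P_{[m_0,m_0]}(Q)$, which in the $G$-decomposition has the form $h \odot (\text{lower Young tractor})$, contributes to \eqref{captcKill} only a pure $\mbf{g}$-trace term. Equivalently, the trace-free part of $k$ depends solely on $\bar{Q} := P_{[m_0,m_0]_0}(Q)$, the projection of $Q$ onto the irreducible bundle $\cT_{[m_0,m_0]_0}$; and a direct check (using $X^{A_i}X^{B_i}Q = 0$ by skew-symmetry in each pair, together with the $h$-tracelessness of $\bar{Q}$) shows that when $Q=\bar{Q}$ the tensor $k$ is already trace-free. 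Since the normal tractor connection preserves both $h$ and the $\GL$-symmetry type, $\bar{Q}$ is again parallel.

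It then remains to identify the contraction of the symmetrised $\bX$-product against $\bar{Q} \in \Gamma(\cT_{[m_0,m_0]_0})$ with the conformal first BGG projection $\Pi$ of Theorem \ref{normp}: reading off the composition series \eqref{tseqs} (the $X$-factors in $\bX$ project onto the top, conformal-Killing-tensor slot), this contraction recovers $\Pi(\bar{Q})$ up to a non-zero constant, so that $k = c\,\Pi(\bar{Q})$ with $c \neq 0$. As $\bar{Q}$ is a parallel section of the irreducible bundle underlying the conformal Killing equation \eqref{cKt}, Theorem \ref{normp} yields $L(k) = \bar{Q}$, which is parallel; thus $k$ is a normal conformal Killing tensor. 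I expect the main obstacle to be precisely the bookkeeping of the trace terms in the middle step---verifying that the non-trace-free components of $Q$ decouple into pure $\mbf{g}$-multiples and so do not spoil the trace-freeness of $k$---since this is the one place where the conformal argument genuinely departs from its projective model.
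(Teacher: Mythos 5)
Your proposal is correct and follows essentially the same route as the paper: reduce $Q$ to its irreducible component in $\cT_{[m_0,m_0]_0}$, identify the symmetrised $\bX$-contraction with the BGG projection $\Pi$, and conclude from Theorem \ref{normp} that $L(k)$ is parallel. The only difference is in execution: the paper obtains the trace reduction by observing that $\odot^{m_0}\Sigma$ is totally trace-free along null curves (since $\Sigma$ is there totally null and simple), whereas you compute the tractor traces of the $\bX$-factors directly -- note only that $h_{B_iB_j}\bX^{A_iB_i}_{a_i}\bX^{A_jB_j}_{a_j}=\mbf{g}_{a_ia_j}X^{A_i}X^{A_j}$, i.e.\ the trace collapses to $\mbf{g}_{a_ia_j}$ times a product of canonical tractors rather than of further $\bX$-factors, which does not affect your conclusion that trace parts of $Q$ contribute only pure $\mbf{g}$-trace terms to $k$.
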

\begin{proof}
  The proof is almost identical to that for Proposition
  \ref{BGGtoK-prop}. The additional ingredient is that in this case $\odot^{m_0}\Sigma$ is trace-free with respect to the tractor metric because $\Sigma$ is totally null, as observed in Section \ref{null-sec}, and so, using also that $\Sigma$ is simple we have
  $\odot^{m_0}\Sigma\in \Gamma(\cT^{[m_0,m_0]_0})$ along any null curve.
\end{proof}

Thus in the setting \hyperlink{n}{(n)}, the first integrals coming from Theorem
\ref{fi-thm} may be viewed as arising from (normal) conformal Killing
tensors, but these conformal Killing tensors are, in general, arising
from other BGG solutions via Proposition \ref{BGGtoK-cprop}.

\subsection{The first integrals of conformal circles}\label{ncfi}

On a pseudo-Riemannian manifold of any signature, or the conformal
structure $(M,\bc)$ that it determines, Theorem \ref{fi-thm} uses
solutions of conformal first BGG equations to generate first integrals
along conformal circles. This is the setting \hyperlink{c}{(c)} for that Theorem so
we view $\mathbb{R}^{n+2}$ as the defining representation for
$G:=\SO(h)$, where $h$ is a fixed non-degenerate symmetric bilinear
form on $\mathbb{R}^{n+2}$ of signature $(p+1,q+1)$, and now define
$$
  \mathbb{W}_0:= \Lambda^3\mathbb{R}^{n+2}.
$$

Again in this case $\phi$ is determined by the bilinear form $h$ and the
compatible volume form on $\mathbb{R}^{n+2}$, as preserved by
$G=\SO(h)$, and traces. Equivalently, in any example the formula for $T$
is constructed using the tractor metric and its inverse, the tractor
volume form, and the identity $\delta^A_B$.

Thus from the point of view of Theorem \ref{fi-thm} and its general
application there is little difference from the setting \hyperlink{n}{(n)}
above. However an important difference arises in that conformal
Killing tensors no longer have a distinguished role as there.

Even na\"{\i}vely some significant difference is to be expected as the
first integrals found by Theorem \ref{fi-thm} will, by construction,
be (pointwise) polynomial in the velocity {\em and the acceleration}
of the given distinguished curve. However we can see this clearly
using the construction directly, as follows. Recall that for a
conformal circle $\gamma$ the characterising tractor $\Sigma$ is a 3-tractor that, according to the normal tractor connection, is
parallel along $\gamma$. Using that $\Sigma$ is simple and arguing in a similar way to the previous cases we have that
$$
  \otimes^{s}\Sigma \in \Gamma ( \cT^{[s,s,s]})
$$
along $\gamma$. But now the difference is $\Sigma$ does not satisfy
any analogue of the nilpotency \eqref{nil-eq} and $\odot^{s}\Sigma$ does not
take values in a $G$-irreducible tractor bundle if $s> 1$.
We obtain the different irreducible components of $\odot^{s}\Sigma$  by splitting it
into its various  trace-free and trace parts.
The
distinct irreducible components of $\odot^{s}\Sigma$ can then pair
with parallel tractors of distinct tensor type, and thus with the
prolongations of solutions to corresponding distinct first BGG
equations.

\subsubsection{A basic example} \label{ob-ex}

For conformal circles the simplest application of Theorem \ref{fi-thm}
is on a conformal manifold equipped with a tractor $3$-form $\mathbb{K}_{ABC}\in \Gamma(\Lambda^3\cT^*)$
that is parallel for the normal conformal tractor connection. Then clearly
\begin{equation}\label{Sig-ex}
  \Sigma^{ABC}\mathbb{K}_{ABC}
\end{equation}
is necessarily constant along any conformal circle.

The composition series for $\Lambda^3\cT^*$ is
$$
  \Lambda^3\cT^*= \ce_{[bc]}[3]\lpl \left( \ce_{[abc]}[3] \oplus \ce_a[1] \right) \lpl \ce_{[bc]}[1] .
$$
Thus the first integral arises from a solution to the first BGG equation on the projecting
part $\ce_{[bc]}[3] $. In terms of a metric for the conformal class,
this BGG equation is the conformal Killing-Yano equation (or conformal
Killing form equation) \eqref{cKY}. Thus from Theorem \ref{fi-thm} we
see that normal solutions of equation \eqref{cKY} yield conformal
circle first integrals via \eqref{Sig-ex}.

In fact the requirement that the solution is normal can be dropped.
\begin{thm}\label{basic-ex-thm}
  On a pseudo-Riemannian manifold or conformal manifold, suppose that
  \( k_{ab}\in \Gamma(\ce_{[bc]}[3]) \) is a conformal Killing-Yano
  2-form, i.e.\ $k_{ab}$ satisfies
  \begin{align}\label{eq-CKY2}
    \nabla_{a}k_{bc} & =\nabla_{[a}k_{bc]} -\frac{2}{n-1}\mbf{g}_{a[b}\nabla^pk_{c]p} \, .
  \end{align}
  Write \(\Gamma( \Lambda^3\cT^* )\ni \mathbb{K}_{ABC}
  :=L(k) \) where $L$ is  the BGG splitting operator $L:
    \ce_{[bc]}[3]\to \mathcal{E}_{[ABC]}$, then expression
  \eqref{Sig-ex}, equivalently,
  $$
    \mathbf{u}^a \mathbf{a}^b k_{ab} \mp \frac{1}{n-1} \mathbf{u}^a \nabla^p k_{pa},
  $$
  is a first integral of a conformal circle with weighted velocity $\mbf{u}^a$ and acceleration $\mbf{a}^b$ with $\mbf{u}^a \mbf{u}_a = \pm 1$.
\end{thm}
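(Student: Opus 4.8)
\emph{Plan.} I would split the argument into an algebraic identification of the conserved quantity and an analytic verification of its conservation, arranging things so that the latter never uses normality.

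First I would write out the BGG splitting operator $L\colon\ce_{[bc]}[3]\to\ce_{[ABC]}$ in a chosen scale $g\in\bc$. Using the composition series $\Lambda^3\cT^*=\ce_{[bc]}[3]\lpl(\ce_{[abc]}[3]\oplus\ce_a[1])\lpl\ce_{[bc]}[1]$ together with the tractor identities \eqref{ctrids}, the projecting slot of $\mathbb{K}=L(k)$ is $k_{bc}$ (paired with $3\,Y_{[A}Z_B{}^b Z_{C]}{}^c$), the totally skew middle slot is $\nabla_{[a}k_{bc]}$, and — the slot that matters here — the $\ce_a[1]$ middle slot is a fixed multiple of the divergence $\nabla^p k_{pa}$, as is forced by the conditions $\Pi\circ L=\mathrm{id}$ and $\nabla L(k)\in\ker\partial^*$ characterising $L$ in Theorem \ref{normp}. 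I then contract $\mathbb{K}_{ABC}$ with the explicit form \eqref{cSigma} of $\Sigma$. Using $X^AY_A=1$, $Z^A{}_aZ_A{}^b=\delta^b{}_a$ and the vanishing of all other pairings of splitting operators, the term $6\,\mbf{u}^b\mbf{a}^c\,X^{[A}Z^{B}{}_b Z^{C]}{}_c$ of $\Sigma$ selects the projecting slot and yields $\mbf{u}^a\mbf{a}^b k_{ab}$, while the term $\pm 6\,\mbf{u}^c\,X^{[A}Y^B Z^{C]}{}_c$ selects the $XYZ$ slot and yields $\mp\tfrac{1}{n-1}\mbf{u}^a\nabla^p k_{pa}$. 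This proves that $\Sigma^{ABC}\mathbb{K}_{ABC}$ equals the stated expression, matching \eqref{Sig-ex}.

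For conservation, the normal case is immediate: then $\mathbb{K}=L(k)$ is parallel and, since $\Sigma$ is parallel along $\gamma$ by Proposition \ref{prop-conf-circ-par}, the pairing is constant, as in Theorem \ref{fi-thm}. To drop normality I would differentiate the scalar $F:=\mbf{u}^a\mbf{a}^b k_{ab}\mp\tfrac{1}{n-1}\mbf{u}^a\nabla^p k_{pa}$ directly along $\gamma$. Writing $\mbf{u}^c\nabla_c\mbf{u}^a=\mbf{a}^a$, $\mbf{u}^a\mbf{a}_a=0$ and $\mbf{u}^a\mbf{u}_a=\pm1$, I apply the conformal circle equation \eqref{eq-proj-inv-wt2} to $\mbf{u}^c\nabla_c\mbf{a}^b$ and the conformal Killing--Yano equation \eqref{eq-CKY2} to $\mbf{u}^c\nabla_c k_{ab}$. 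Two simplifications occur: the totally skew part $\nabla_{[a}k_{bc]}$ is annihilated by the symmetric factor $\mbf{u}^a\mbf{u}^d$, and the two terms proportional to $\mbf{a}^b\nabla^p k_{pb}$ (one coming from each half of $F$) cancel. What remains is $\mbf{u}^c\nabla_c F=\pm\,\mbf{u}^d\mbf{u}^a\bigl(\Rho_d{}^b k_{ab}-\tfrac{1}{n-1}\nabla_d\nabla^p k_{pa}\bigr)$, so the theorem reduces to the single second-order identity $\mbf{u}^d\mbf{u}^a\nabla_d\nabla^p k_{pa}=(n-1)\,\mbf{u}^d\mbf{u}^a\Rho_d{}^b k_{ab}$.

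The genuinely delicate step — \emph{the main obstacle} — is establishing this integrability identity from the first-order equation \eqref{eq-CKY2} alone. I would obtain it by differentiating \eqref{eq-CKY2} once more and commuting covariant derivatives, so that $\nabla_d\nabla^p k_{pa}-\nabla^p\nabla_d k_{pa}$ becomes a contracted curvature acting on $k$; decomposing $R_{abcd}=W_{abcd}+2\,\mbf{g}_{c[a}\Rho_{b]d}-2\,\mbf{g}_{d[a}\Rho_{b]c}$ and substituting \eqref{eq-CKY2} back into $\nabla^p\nabla_d k_{pa}$ expresses $\nabla_d\nabla^p k_{pa}$ through the codifferential of $\nabla_{[d}k_{pa]}$, a trace term along $\mbf{g}_{da}$, a Weyl contraction, and a Schouten contraction. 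The point is that under the symmetric contraction $\mbf{u}^d\mbf{u}^a$ the codifferential term is skew in $d,a$ and drops, while the Weyl contraction vanishes by its trace-freeness and symmetries, leaving precisely $(n-1)\,\Rho_d{}^b k_{ab}$ with the correct coefficient. This cancellation is exactly the phenomenon anticipated in Remark \ref{non-normal}: in tractor language, $\mbf{u}^c\nabla_c(\Sigma^{ABC}\mathbb{K}_{ABC})=\Sigma^{ABC}\,\mbf{u}^c\nabla_c\mathbb{K}_{ABC}$ because $\Sigma$ is parallel along $\gamma$, and $\nabla L(k)$ lies in $\ker\partial^*$ with vanishing $\cH_1$-projection since $k$ solves \eqref{eq-CKY2}; the factor $X$ carried by $\Sigma$ then annihilates the residual curvature slots, so only the BGG value survives, and it is zero. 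Either route closes the proof, and the curvature bookkeeping confirming that the Weyl and skew contributions vanish is the step requiring care.
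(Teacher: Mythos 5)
Your proposal is correct and follows essentially the same route as the paper's proof: compute $L(k)$ in a scale, contract with the explicit form \eqref{cSigma} of $\Sigma$ to identify the conserved quantity, then differentiate directly along the curve using \eqref{eq-proj-inv-wt2} and \eqref{eq-CKY2}, with the argument closing on exactly the integrability identity $\mbf{u}^a\mbf{u}^c\nabla_c\nabla^p k_{pa}=(n-1)\,\mbf{u}^a\mbf{u}^c\Rho_c{}^p k_{pa}$ obtained by commuting derivatives and substituting \eqref{eq-CKY2} back in. The concluding tractor-language remark is a nice gloss but is not needed; the direct computation is the proof.
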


\begin{proof}
  We assume the curve is spacelike or timelike, so that its weighted velocity $\mbf{u}^a$ satisfies $\mbf{u}^a\mbf{u}_a= 1$ or $\mbf{u}^a\mbf{u}_a= -1$ respectively,

  First, we explicitly compute the derivative of the quantity \eqref{Sig-ex} along the curve. In the conformal case we choose a metric $g\in\bc$ to compute.
  Computing $L(k)$ (cf.\ \cite{G-Sil-ckforms}) gives,
  \begin{equation}\label{kform-eq}
    \mathbb{K}_{ABC} = Y_{[A} Z_B {}^b Z_{C]} {}^c k_{bc} + Z_{[A} {}^a Z_B{}^b Z_{C]}{}^c \nabla_{a}k_{bc} + \frac{2}{n-1} X_{[A} Y_B Z_{C]}{}^a \nabla^p k_{pa} + X_{[A} Z_B {}^b Z_{C]} {}^c \rho_{bc},
  \end{equation}
  where
  \( \rho_{ab} \) will not be important for our purposes.

  Thus, using \eqref{cSigma}, we obtain
  \begin{align*}
    \Sigma^{ABC} \mathbb{K}_{ABC}
      & = \pm 6 \, \mathbf{u}^{c} X^A Y^B Z^C{}_c \mathbb{K}_{ABC} + 6 \, \mathbf{u}^b \mathbf{a}^c X^A Z^B {}_b Z^C {}_c \mathbb{K}_{ABC} \\
      & = 2 \, \mathbf{u}^a \mathbf{a}^b k_{ab} \mp \frac{2}{n-1} \mathbf{u}^a \nabla^p k_{pa}.
  \end{align*}
  Differentiating this and using \eqref{eq-wt-acc} and  \eqref{eq-proj-inv-wt2}, and the skew-symmetric of $k_{ab}$ leads to
  \begin{equation}
    \label{eq-RHS=0}
    \begin{split}
      \mathbf{u}^c \nabla_c \left( \Sigma^{ABC} \mathbb{K}_{ABC} \right)
      = \pm 2 \, \mathbf{u}^a \mathbf{u}^c
      & P_c {}^b k_{ab} + 2\,  \mathbf{u}^a \mathbf{a}^b \mathbf{u}^c \nabla_c k_{ab}                                            \\
      & \mp \frac{2}{n-1} \mathbf{a}^a \nabla^p k_{pa} \mp \frac{2}{n-1} \mathbf{u}^a \mathbf{u}^c \nabla_c \nabla^p k_{pa} \, .
    \end{split}
  \end{equation}
  Using \eqref{eq-CKY2} together with the fact that $\mbf{u}^a \mbf{u}_a = \pm 1$ and $\mbf{u}^b \mbf{a}_b = 0$ (see \eqref{eq-rel-wt-vel-acc}) shows that the two middle terms cancel. We compute
  \begin{align*}
    \mathbf{u}^a \mathbf{u}^c \nabla_c \nabla^p k_{pa} & =\mathbf{u}^a \mathbf{u}^c \nabla^p \nabla_c k_{pa} - (n-2) \mathbf{u}^a \mathbf{u}^c \Rho_c{}^p k_{pa}                       \\
                                                       & = \frac{1}{n-1} \mathbf{u}^a \mathbf{u}^c  \nabla_c \nabla^p k_{pa}  - (n-2) \mathbf{u}^a \mathbf{u}^c \Rho_c{}^p k_{pa} \, ,
  \end{align*}
  where we have commuted the covariant derivatives in the first line, and used \eqref{eq-CKY2} in the second line. Hence,
  \begin{align*}
    \mathbf{u}^a \mathbf{u}^c \nabla_c \nabla^p k_{pa} & = (n-1) \mathbf{u}^a \mathbf{u}^c \Rho_c{}^p k_{pa}
  \end{align*}
  from which  we conclude that the first and last terms of \eqref{eq-RHS=0} cancel each other out.
  Hence,
  \begin{align*}
    \mathbf{u}^c \nabla_c \left( \Sigma^{ABC} \mathbb{K}_{ABC} \right) & = 0 \, ,
  \end{align*}
  as required.
\end{proof}

\begin{rem}
  The quantity \eqref{Sig-ex} is a generalisation of Tod's quantity (c.f. \cite{Tod}, equations (17) and (B2)). Whereas the quantities of \cite{Tod} were constructed for specific 3- and 4-dimensional (pseudo-)Riemannian manifolds, \eqref{Sig-ex} exists on an arbitrary conformal manifold. When considering these specific cases, our equation simply recovers his.
\end{rem}

\subsubsection{An example from a trace-free part of $\otimes^2\Sigma$} \label{S-ex}

Let \( S^{AB} := -\frac{1}{2} \Sigma^{A} {}_{CD} \Sigma^{BCD} \) and write
$\mathring{S}^{AB}$ for the part that is trace-free with respect to
the conformal tractor metric. Since the tractor metric is parallel
everywhere and $\Sigma$ is parallel along any conformal circle it
follows at once that $S^{AB}$ and $\mathring{S}^{AB}$ are also
parallel along any conformal circle.  Thus if $H_{AB}\in
  \Gamma(\ce_{(AB)_0})$
is a parallel tractor on $(M,\bc)$ then
\begin{equation}\label{Sfi}
  \mathring{S}^{AB}H_{AB} ={S}^{AB}H_{AB}
\end{equation}
is a first integral for any conformal circle $\gamma$.
This is an example illustrating Theorem \ref{fi-thm}.

Let us write this explicitly in terms of the weighted velocity,
acceleration and the normal BGG solution corresponding to $H_{AB}$.
Since the projecting part of $\ce_{(AB)_0}$ is the density bundle
$\ce[2]$ (recovered by the map $H_{AB}\mapsto X^AX^BH_{AB}$) parallel
sections of $\ce_{(AB)_0}$ are equivalent to normal solutions from a
first BGG operator on $\ce[2]$. The latter is a 3rd order operator
$\mathcal{D}_0:\ce[2]\to \ce_{(abc)_0}[2]$,
the (conformally invariant) equation of which is
explicitly given by
\begin{equation} \label{cto}
  \nabla_{(a} \nabla_b \nabla_{c)_{0}} \tau + 4 \, \Rho_{(ab} \nabla_{c)_{0}} \tau + 2 \, \tau \nabla_{(a} \Rho_{bc)_{0}} = 0,
\end{equation}
for any $g\in\bc$ with Levi-Civita $\nabla$. The corresponding BGG splitting operator is a fourth-order differential operator
$$
  L:\ce[2]\to \ce_{(AB)_0}
$$
that takes the form
\begin{equation}\label{Lform}
  \begin{split}
    L(\tau)_{AB} =&
    \, Y_A Y_B \tau + Y_{(A}^{}Z_{B)}{}^{b} \na_b \tau
    + \frac12 Z_{(A}{}^{a} Z_{B)}{}^{b} \bigl[ \na_a \na_b \tau + 2\Rho_{ab}\tau
      -\frac{1}{n+2} \bg_{ab} \bigl( \Delta \tau + 2\mathsf{J} \tau \bigr)   \bigr] \\
    & \qquad - \frac{1}{n+2} X_{(A} Y_{B)} \bigl[ \Delta \tau + 2\mathsf{J} \tau \bigr]
    - X_{(A}^{}Z_{B)}{}^{b} \bigl[ \frac{1}{n+2} \na_b \bigl( \Delta + 2\mathsf{J} \bigr)
      \tau + \Rho_a{}^r \na_r\tau \bigr] \\
    & \qquad \qquad+ X_A X_B \bigl[\star],
  \end{split}
\end{equation}
where $\mathsf{J}:=\mathbf{g}^{ab} \Rho_{ab}$ (and we do not need the form of the $XX$ term.) The right hand side of \eqref{Lform} is parallel if and only if $\tau$ is a normal solution of \eqref{cto} (see Section \ref{BGGsec}). In particular $H=L(\tau)$ for some $\tau\in \Gamma (\ce[2])$.
On the other hand using \eqref{cSigma} \( S^{AB} \) is found to be
\begin{equation}\label{Sexpl}
  S^{AB} = \mathbf{u}^a \mathbf{u}^b Z^A {}_a Z^B {}_b \pm 2 X^{(A}Y^{B)} -2\mathbf{a}^b X^{(A} Z^{B)} {}_b \mp (\mathbf{a}^c \mathbf{a}_c) X^A X^B,
\end{equation}
where $\mathbf{u}^a$ and $\mathbf{a}^c$ are the weighted velocity and acceleration of the unparametrised curve $\gamma$ respectively, and $\mathbf{u}^a \mathbf{u}_a = \pm1$.
Thus, the conformal circle  first integral \eqref{Sfi} is explicitly given by
\begin{equation}\label{Sfi-expl}
  \begin{split}
    S^{AB} H_{AB} = \frac{1}{2} \mathbf{u}^a \mathbf{u}^b( & \nabla_a \nabla_b \tau + 2\Rho_{ab} \tau - \frac{1}{n+2} \mathbf{g}_{ab}(\Delta\tau + 2 \mathsf{J} \tau)) \\ &\mp \frac{2}{n+2}(\Delta \tau + 2 \mathsf{J} \tau ) - 2 \mathbf{a}^b \nabla_b \tau \mp (\mathbf{a}^b \mathbf{a}_b) \tau,
  \end{split}
\end{equation}
in terms of a metric $g\in \bc$.

As for the earlier examples a stronger result is available. The
normality is not needed:
\begin{thm}\label{S-thm}
  If $\tau\in \Gamma(\ce[2])$ is any solution of \eqref{cto} then
  \eqref{Sfi-expl} is a first integral for unparametrised conformal circles.
\end{thm}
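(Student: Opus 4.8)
The plan is to follow the direct, computational strategy of Theorems \ref{3rd-thm} and \ref{basic-ex-thm} rather than to invoke normality. Since the explicit formula \eqref{Sfi-expl} for the candidate first integral $Q := S^{AB}H_{AB}$ is already available in terms of $\tau$, its derivatives, and the weighted velocity $\mbf{u}^a$ and acceleration $\mbf{a}^a$ (with $\mbf{u}^a\mbf{u}_a=\pm1$), I would simply differentiate $Q$ along $\gamma$, i.e.\ apply $\mbf{u}^c\nabla_c$ for a chosen $g\in\bc$, and show the result vanishes whenever $\tau$ solves \eqref{cto}. This sidesteps having to control $\nabla L(\tau)$ for non-normal $\tau$, which would otherwise carry curvature corrections.

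First I would assemble the terms. Writing $\Phi_{ab} := \nabla_a\nabla_b\tau + 2\Rho_{ab}\tau$ (symmetric, as the Levi-Civita connection is flat on the density bundle) and noting that $\Delta\tau + 2\mathsf{J}\tau = \mbf{g}^{ab}\Phi_{ab}$, the expression \eqref{Sfi-expl} becomes a combination of $\mbf{u}^a\mbf{u}^b\Phi_{ab}$, its trace, $\mbf{a}^b\nabla_b\tau$, and $(\mbf{a}\cdot\mbf{a})\tau$, with the $\tfrac{1}{n+2}$ factors encoding the passage from the full symmetric second-order operator to its trace-free part. Differentiating along $\gamma$, I would use $\mbf{u}^c\nabla_c\mbf{u}^a = \mbf{a}^a$, the conformal circle equation in the form \eqref{eq-proj-inv-wt2} for $\mbf{u}^c\nabla_c\mbf{a}^a$, and the relations $\mbf{u}^b\mbf{a}_b = 0$ and $\mbf{u}^a\mbf{u}_a=\pm1$ from \eqref{eq-rel-wt-vel-acc}.

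The one genuinely structural step is the appearance of third derivatives of $\tau$: the term $\tfrac12\mbf{u}^a\mbf{u}^b\mbf{u}^c\nabla_c\nabla_a\nabla_b\tau$ must be symmetrised to produce $\nabla_{(a}\nabla_b\nabla_{c)}\tau$, and commuting covariant derivatives on the weighted one-form $\nabla\tau$ generates curvature terms. Here the key simplification is that the Weyl contributions are contracted against $\mbf{u}^c\mbf{u}^a$ and hence vanish by the skew-symmetry of $W_{ab}{}^{c}{}_d$ in its first two indices, leaving only Schouten-tensor corrections. These $\Rho$-terms then combine with those produced by \eqref{eq-proj-inv-wt2} and with the derivatives of the $\mbf{a}^b\nabla_b\tau$ and $(\mbf{a}\cdot\mbf{a})\tau$ terms.

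Collecting everything, I expect $\mbf{u}^c\nabla_c Q$ to reduce to $\mbf{u}^a\mbf{u}^b\mbf{u}^c$ contracted against the left-hand side of \eqref{cto}, where the $\tfrac{1}{n+2}$ trace terms are precisely what convert the plain symmetrisation into the trace-free symmetrisation $\nabla_{(a}\nabla_b\nabla_{c)_0}\tau + 4\Rho_{(ab}\nabla_{c)_0}\tau + 2\tau\nabla_{(a}\Rho_{bc)_0}$. Since $\tau$ solves \eqref{cto}, this tensor vanishes identically, so does its contraction with $\mbf{u}^a\mbf{u}^b\mbf{u}^c$, and hence $\mbf{u}^c\nabla_c Q = 0$. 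The main obstacle is the bookkeeping of the trace parts: one must check that the $\mbf{g}_{ab}$-trace corrections carried by $H_{AB}$ exactly reproduce the trace-free projection $(\,\cdot\,)_0$ of \eqref{cto} after contraction with $\mbf{u}^a\mbf{u}^b\mbf{u}^c$ (recalling that $\mbf{u}^a\mbf{u}_a=\pm1$, so traces do not drop out automatically), and that no residual acceleration terms survive.
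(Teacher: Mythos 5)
Your strategy is sound and would succeed, but it is genuinely different from the paper's argument, and as written it stops at the point where the real work begins. The paper does \emph{not} differentiate the explicit scalar formula \eqref{Sfi-expl}: instead it exploits the tractor structure throughout. Since $\Sigma$ is parallel along a conformal circle, so is $S^{AB}=-\tfrac12\Sigma^A{}_{CD}\Sigma^{BCD}$, hence $\mbf{u}^c\nabla_c(S^{AB}H_{AB})=\mbf{u}^cS^{AB}\nabla_cH_{AB}$ with no need to invoke \eqref{eq-proj-inv-wt2} or to differentiate any acceleration terms by hand. Then the defining property $\partial^*\nabla L(\tau)=0$ of the splitting operator forces $\nabla_cH_{AB}=\kappa_{cab}Z_A{}^aZ_B{}^b+\alpha_{bc}X_{(A}Z_{B)}{}^b+\omega_cX_AX_B$, and contraction with the explicit form \eqref{Sexpl} of $S^{AB}$ kills everything except $\mbf{u}^a\mbf{u}^b\mbf{u}^c\kappa_{cab}$; finally $\kappa_{cab}$ is computed from \eqref{Lform}, observed to be totally trace-free, so the contraction equals $\mbf{u}^a\mbf{u}^b\mbf{u}^c\kappa_{(abc)_0}=\mbf{u}^a\mbf{u}^b\mbf{u}^c(\cD_0\tau)_{abc}=0$. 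This neatly disposes of exactly the two issues you flag as ``the main obstacle'': the trace bookkeeping is absorbed into the single observation that $\kappa$ is trace-free, and no residual acceleration terms can survive because they were never differentiated in the first place. Your route, by contrast, is the bare-hands computation in the style of the paper's own proofs of Theorems \ref{3rd-thm} and \ref{basic-ex-thm}, and its ingredients are the right ones; one small correction is that on contracting $\mbf{u}^c\mbf{u}^a[\nabla_c,\nabla_a]\nabla_b\tau$ the \emph{entire} curvature commutator dies by skew-symmetry in $c,a$ (the density part contributes nothing since the Levi-Civita connection is flat on $\ce[w]$), not merely the Weyl part — the Schouten terms you need come instead from differentiating $2\Rho_{ab}\tau$ and from \eqref{eq-proj-inv-wt2}. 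To count as a proof you would still need to carry out the collection of terms and exhibit the cancellation you ``expect''; as it stands the decisive identity is asserted rather than verified, whereas the paper's structural argument makes it essentially automatic.
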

\begin{proof}
  Suppose that $\tau\in \Gamma(\ce[2])$ is a solution of \eqref{cto}
  and that $\gamma$ is an unparametrised conformal circle with
  weighted velocity $\mbf{u}$ and weighted acceleration
  $\mbf{a}$. Then $S^{AB}=  -\frac{1}{2} \Sigma^{A} {}_{CD} \Sigma^{BCD}$
  is parallel along $\gamma$ and given by \eqref{Sexpl}.  So setting $H:=L(\tau)$ we have
  \[
    \mathbf{u}^c \nabla_c \left( S^{AB} H_{AB} \right) = \mathbf{u}^c S^{AB} \nabla_c H_{AB}.
  \]
  From the condition $\partial^*\nabla L(\tau)=0$ that in part defines $L$, or alternatively by direct calculation, it follows  that  \( \nabla H \) takes the form

  \[
    \nabla_c H_{AB} = \kappa_{cab} Z_A {}^a Z_B {}^b + \alpha_{bc}
    X_{(A}Z_{B)} {}^b + \omega_c X_A X_B,
  \]
  for some weighted tensors $\alpha_{bc}$, $\omega_c$ and $\kappa_{cab}=\kappa_{cba}$.
  Note that upon contraction with \( S^{AB} \), all terms of this display are annihilated except for the \( Z_A ^a Z_B ^b \) term.
  Thus
  \[
    \mathbf{u}^c S^{AB} \nabla_c H_{AB} = \mathbf{u}^a \mathbf{u}^b \mathbf{u}^c \kappa_{abc}.
  \]

  Now, using  \eqref{Lform} and the tractor connection formulae one calculates that
  \begin{equation} \label{ZZ-term-of-nabla-H}
    \begin{split}
      \kappa_{cab} = \frac{1}{2} [ \nabla_c & \nabla_a \nabla_b
        \tau + 2 (\nabla_c \Rho_{ab})\tau + 2 \Rho_{ab} \nabla_c \tau
        + 2  \Rho_{c(a}\nabla_{b)} \tau\\ &
        -\frac{1}{n+2} \mathbf{g}_{ab} \nabla_c (\Delta \tau + 2
        \mathsf{J} \tau ) - \frac{2}{n+2} \mathbf{g}_{c(a} \nabla_{b)}
        (\Delta \tau + 2 \mathsf{J} \tau ) - 2 \mathbf{g}_{c(a} \Rho_{b)}{}^d
        \nabla_d \tau]
    \end{split}
  \end{equation}
  Computing reveals that \eqref{ZZ-term-of-nabla-H} is trace-free over any pair of indices.
  Moreover
  contracting \( \mathbf{u}^a \mathbf{u}^b \mathbf{u}^c\) into this display will force symmetrisation over \( abc \). Thus
  $$
    \mathbf{u}^c S^{AB} \nabla_c H_{AB} = \mathbf{u}^a \mathbf{u}^b \mathbf{u}^c \kappa_{(abc)_0}.
  $$
  But $\kappa_{(abc)_0}$ is exactly $\cD_0(\tau)$, as given by the left hand side of \eqref{cto}. Thus $\mathbf{u}^c\nabla_c (S^{AB} H_{AB}) =0$ as claimed.
\end{proof}

\subsubsection{An example from of the general procedure} \label{genex}

The examples of expression (\ref{Sig-ex}) and expression (\ref{Sfi})
each illustrate cases that arise from the general procedure described
in Theorem \ref{genid} of section \ref{gen_proc}. (Moreover
\ref{key-if} is an analogue for geodesics.)

To see how Theorem \ref{genid} yields non-trivial first integrals one
need not necessarily push through the examples in full
detail. (Although these details can be computed completely
algorithmically, the computations can become demanding without the use
of software.) We illustrate this with  an
example from another trace part (cf.\ (\ref{Sfi})) of $\Gamma(S^2 \Lambda^3
\cT^*)|_{\gamma}$.  Let $\mathbb{S} := \Sigma^{ABE} \Sigma^{CD}
   {}_{E}$.  From \cite{Bailey1990a}, we know that there is a scale in
   the conformal class for which the conformal circle $\gamma$ is an
   affinely-parametrised geodesic and $u^a P_{ab} = 0$, where $u^a$ is
   the (unweighted) velocity of the curve $\gamma$, and $P_{ab}$ is
   the Schouten tensor for this special scale.  From
   (\ref{eq-wt-uwt-acc}), if we work in this scale we also have that
   $\mbf{a}^b = 0$, and hence the tractor $\Sigma$ takes the form
\begin{equation} \label{sigma-in-beastwood-scale}
  \Sigma = \pm6 \mbf{u}^c X^{[A} Y^B Z^{C]} {}_c,
\end{equation}
and therefore 
\begin{equation} \label{S_ABCD}
  \mathbb{S}^{ABCD} = \mbf{u}^e \mbf{u}^f \left( 4X^{[A} Y^{B]} X^{[C} Y^{D]} \mbf{g}_{ef} - 4 Y^{[A} Z^{B]} {}_e X^{[C} Z^{D]} {}_f - 4X^{[A} Z^{B]} {}_e Y^{[C} Z^{D]} {}_f \right),
\end{equation}
which is a section of $\cT^{[2,2]}$, in the notation of section
\ref{young-sec}.  An obvious way to make an irreducible part of this
section is to project to the Cartan part $\mathring{\cT}^{[2,2]}$ of
$\cT^{[2,2]}$.  This amounts to removing all traces to ensure that the
resulting section is totally trace-free.  Write
$\mathring{\mathbb{S}}^{ABCD}\in \Gamma(\mathring{\cT}^{[2,2]})$ for
the Cartan part of $\mathbb{S}^{ABCD}$.  To calculate
$\mathring{\mathbb{S}}^{ABCD}$ explicitly, we need the following
trace-part of $\mathbb{S}^{ABCD}$:

\begin{equation} \label{S^AB}
  S^{AB} := \mathbb{S}^{AEB} {}_E = -\mbf{u}^e \mbf{u}^f \left( 4X^{(A} Y^{B)} \mbf{g}_{ef} + 2Z^A {}_e Z^B {}_f \right)
  = \mp 4 X^{(A} Y^{B)} -2 Z^A {}_e Z^B {}_f \mbf{u}^e \mbf{u}^f 
\end{equation}
In terms of this trace part, one then has
\begin{align*}
  &\mathring{\mathbb{S}}^{ABCD} = \mathbb{S}^{ABCD} - \frac{1}{n} \left( S^{AC} h^{BD} - S^{BC} h^{AD} + S^{BD} h^{AC} - S^{AD} h^{BC} \right) \\
                               &+ \frac{1}{n(n+1)} S^{EF} {}_{EF} \left( h^{AC} h^{BD} - h^{BC} h^{AD} \right)\\
                               &= \pm4 X^{[A}Y^{B]} X^{[C}Y^{D]} - 4\mbf{u}^e \mbf{u}^f Y^{[A}Z^{B]} {}_e X^{[C}Z^{D]} {}_f - 4 \mbf{u}^e \mbf{u}^f X^{[A}Z^{B]} {}_e Y^{[C}Z^{D]}_f\\
                               &-\frac{1}{n} \left[ \left( \mp8 X^{(A}Y^{C)} X^{(B}Y^{D)}  \mp 4X^{(A}Y^{C)} Z^B {}_a Z^D {}_b \mbf{g}^{ab} - 4 X^{(B}Y^{D)} Z^A {}_e Z^C {}_f \mbf{u}^e \mbf{u}^f - 2 Z^A {}_e Z^C {}_f Z^B {}_a Z^D {}_b \mbf{g}^{ab} \mbf{u}^e \mbf{u}^f \right) \right.\\
                               &-\left( \mp8 X^{(B}Y^{C)} X^{(A}Y^{D)}  \mp 4X^{(B}Y^{C)} Z^A {}_a Z^D {}_b \mbf{g}^{ab} - 4 X^{(B}Y^{C)} Z^A {}_e Z^D {}_f \mbf{u}^e \mbf{u}^f - 2 Z^B {}_e Z^C {}_f Z^A {}_a Z^D {}_b \mbf{g}^{ab} \mbf{u}^e \mbf{u}^f \right)\\ 
                               &+\left( \mp8 X^{(B}Y^{D)} X^{(A}Y^{C)}  \mp 4X^{(B}Y^{D)} Z^A {}_a Z^C {}_b \mbf{g}^{ab} - 4 X^{(B}Y^{D)} Z^A {}_e Z^C {}_f \mbf{u}^e \mbf{u}^f - 2 Z^B {}_e Z^D {}_f Z^A {}_a Z^C {}_b \mbf{g}^{ab} \mbf{u}^e \mbf{u}^f \right)\\
                               &\left. -\left( \mp8 X^{(A}Y^{D)} X^{(B}Y^{C)}  \mp 4X^{(A}Y^{D)} Z^B {}_a Z^C {}_b \mbf{g}^{ab} - 4 X^{(A}Y^{D)} Z^B {}_e Z^C {}_f \mbf{u}^e \mbf{u}^f - 2 Z^A {}_e Z^D {}_f Z^B {}_a Z^C {}_b \mbf{g}^{ab} \mbf{u}^e \mbf{u}^f \right)\right]\\
                               &-\frac{4}{n(n+1)} \left[ \left( 4X^{(A}Y^{C)} X^{(B}Y^{D)} + 2 X^{(A}Y^{C)} Z^B {}_a Z^D {}_b \mbf{g}^{ab} \right. \right. \\
                               &\left. \left.+ 2 X^{(B}Y^{D)} Z^A {}_a Z^C {}_b \mbf{g}^{ab} + Z^A {}_a Z^C {}_c Z^B {}_b Z^D {}_d \mbf{g}^{ac} \mbf{g}^{bd} \right)\right.\\
                               & -  \left( 4X^{(B}Y^{C)} X^{(A}Y^{D)} + 2 X^{(B}Y^{C)} Z^A {}_a Z^D {}_b \mbf{g}^{ab} \right. \\
                               &\left. \left.+ 2 X^{(B}Y^{D)} Z^A {}_a Z^C {}_b \mbf{g}^{ab} + Z^B {}_a Z^C {}_c Z^A {}_b Z^D {}_d \mbf{g}^{ac} \mbf{g}^{bd} \right) \right]
\end{align*}

 This is clearly nonzero, as can be seen by
contracting with e.g. $X_A Z_B {}^p Y_C Z_D {}^q$.  Thus it follows
immediately that any non-zero parallel section of
$(\mathring{\cT}^{[2,2]})^*\cong \mathring{\cT}^{[2,2]}$ will pair
with $\mathring{\mathbb{S}}$ to yield a first integral of conformal
circles that is generically non-trivial.

\section{Distinguished curves as zero loci}\label{zero-sec}

The curve characterisations of Theorems \ref{main-p}, \ref{main-nullc} and \ref{main-c} lead to the conclusion that for suitable BGG solutions
the zero locus of part of the solution jet describes a distinguished curve.
This uses the curved orbit Theorem 2.6 of \cite{CGH-Duke}. What that
result shows is that on a parabolic geometry a parallel tractor field
determines a stratification of the underlying manifold, where the
different strata are in general initial submanifolds, with different
Cartan geometries induced on the strata components. Moreover, and what is most important for us here, by a
comparison map it is shown that locally there is a diffeomorphism
between the given underlying manifold and the model which maps the
strata to the corresponding strata on the model. This means that, for
example, if on the model a given stratum is an embedded smooth
submanifold then any corresponding stratum in the curved parabolic
geometry must necessarily also be an embedded smooth submanifold of
the same dimension. In the case of conformal and projective geometry
the stratification is determined entirely by the algebraic relation of
the canonical tractor $X$ to the given parallel
tractor.

\subsection{Conformal equations with distinguished curves as zero loci}

\begin{prop}\label{cc-zero}
  On a connected conformal manifold $(M,\bc)$ let $k_{bc}$ be a normal solution
  of the conformal Killing form equation such that the parallel tractor
  $L(k)\in \Gamma(\Lambda^3\cT)$ is simple, and of signature $(+,+,-)$
  or $(-,-,+)$.
  Then the locus of  points where
  $$
    (k_{bc},\nabla_{[a}k_{bc]}) \qquad \mbox{for any $g\in \bc$ with Levi-Civita $\nabla$,}
  $$
  both vanish is either empty or a conformal circle.
\end{prop}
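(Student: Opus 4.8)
The plan is to recognise the stated zero locus as the conformally invariant incidence set $Z := \{x : X_x \wedge \mathbb{K}_x = 0\}$ cut out by the parallel tractor $\mathbb{K} := L(k) \in \Gamma(\Lambda^3 \cT)$, to read off its shape from the flat model using the curved orbit theorem, and then to certify it as a conformal circle via Theorem \ref{main-c}. First I would translate the two vanishing conditions into tractor slots. Writing $\mathbb{K}$ out as in \eqref{kform-eq}, its projecting slot is $k_{bc}$ and its $ZZZ$ slot is $\nabla_{[a}k_{bc]}$, while the remaining two slots carry the divergence $\nabla^p k_{pa}$ and a bottom term, each of which comes multiplied by a factor of $X$. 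Hence at a point where $k_{bc}$ and $\nabla_{[a}k_{bc]}$ both vanish one has $\mathbb{K} = X \wedge \Phi$ for some $2$-tractor $\Phi$, so $X \wedge \mathbb{K} = 0$. Conversely, since $\mathbb{K}$ is assumed simple, $X \wedge \mathbb{K} = 0$ forces $X$ into the span of the factors of $\mathbb{K}$, again giving $\mathbb{K} = X \wedge \Phi$; expanding $X \wedge \Phi$ in a scale shows only the $XYZ$ and $XZZ$ slots survive, so the $k_{bc}$ and $\nabla_{[a}k_{bc]}$ slots vanish. This identifies the locus in the statement with $Z$, and also explains the phrase ``for any $g \in \bc$'': once the projecting slot $k$ is zero, the next slot $\nabla_{[a}k_{bc]}$ is split-independent.

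Next I would feed the parallel tractor $\mathbb{K}$ into the curved orbit decomposition of \cite{CGH-Duke}. Because that stratification is governed entirely by the algebraic position of the canonical tractor $X$ relative to $\mathbb{K}$, the set $Z$ is one of its strata, and the comparison diffeomorphism identifies it locally with the corresponding stratum on the flat model. On the model $\mathbb{K}$ is a constant simple $3$-tractor whose factors span a $3$-plane $W \subset \mathbb{R}^{n+2}$ of signature $(+,+,-)$ or $(+,-,-)$, and $\{X \wedge \mathbb{K} = 0\}$ is exactly $\mathbb{P}_+(W \cap \mathbb{N})$. For either signature the null cone of $h|_W$ is nondegenerate and one dimensional up to scale, so this set is a nonempty embedded circle, namely the conformal circle cut out by $W$ in the sense of Section \ref{circ-model}. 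Consequently, if $Z$ is nonempty it is an embedded $1$-manifold.

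It then remains to recognise $Z$ as a conformal circle, for which I would verify directly that it is nowhere null and apply Theorem \ref{main-c}. Choosing a velocity $u$ along $Z$ and differentiating $X \wedge \mathbb{K} = 0$ in the direction $u$, using $\nabla_a X^A = Z^A{}_a$ from \eqref{ctrids} together with $u^a \nabla_a \mathbb{K} = 0$, yields $U \wedge \mathbb{K} = 0$ where $U^A := u^a Z^A{}_a$; by simplicity $U$ lies in $W$. Both $X$ and $U$ thus lie in $W$ with $X \cdot X = 0$ and $X \cdot U = 0$, while $U$ is not a multiple of $X$ since $u \neq 0$, so the signature of $W$ forces $U \cdot U \neq 0$, i.e. $\mbf{g}(u,u) \neq 0$ (positive in the $(+,+,-)$ case, negative in the $(+,-,-)$ case, so that $Z$ is spacelike, respectively timelike, matching Lemma \ref{llem}). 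Since $\mathbb{K}$ is a nowhere-zero simple parallel $3$-tractor along the nowhere-null curve $Z$ with $X \wedge \mathbb{K} = 0$, Theorem \ref{main-c} immediately gives that $Z$ is a conformal circle.

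The main obstacle is the middle step: converting the pointwise algebraic condition $X \wedge \mathbb{K} = 0$ into the statement that $Z$ is a smooth embedded $1$-manifold. This is precisely what the curved orbit theorem supplies, and the signature hypothesis on $L(k)$ is exactly what guarantees the model stratum is a genuine nonempty, one dimensional, nowhere-null circle rather than a point, a higher dimensional set, or empty for purely algebraic reasons; everything else reduces to slot bookkeeping with \eqref{kform-eq} and the tractor identities \eqref{ctrids}.
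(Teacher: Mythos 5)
Your proposal is correct and follows essentially the same route as the paper's proof: identify the vanishing locus with the incidence set $\{X\wedge L(k)=0\}$ via the explicit formula \eqref{kform-eq} for the splitting operator, invoke the curved orbit theorem of \cite{CGH-Duke} to conclude that the locus is either empty or an embedded curve, and then apply Theorem \ref{main-c}. The only substantive addition is your explicit check that the resulting curve is nowhere null (by differentiating the incidence relation to get $U\wedge L(k)=0$ and using the signature of the span of $L(k)$), a point the paper leaves implicit but which is needed before Theorem \ref{main-c} can be invoked.
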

\begin{proof}
  Since $k$ is a normal solution, the image $L(k)$ of the BGG
  splitting operator is parallel for the normal conformal tractor
  connection.  Note that $L(k)$ is a section of $\Lambda^3\cT$, see \eqref{kform-eq}.
  From the formula for $L(k)$ in a scale the condition
  $(k_{bc},\nabla_{[a}k_{bc]}) =0$ at some point $\mr{x}\in M$ is the same as
  $$
    X_{\mr{x}}\wedge L(k)_{\mr{x}}=0
  $$
  where $X_{\mr{x}}$ is the canonical tractor at $\mr{x}$.

  In the case of the model, if $\mathbb{K}$ is a parallel simple $3$-tractor of signature
  $(+,+,-)$
  or $(-,-,+)$ and
  $X \wedge \mathbb{K}$ is zero at some point $\mr{x}$, then  $X\wedge \mathbb{K}$ is zero
  along a curve through $\mr{x}$ (namely the unique conformal circle through
  $x$ with characterising 3-tractor $\Sigma:=\mathbb{K}$ at $\mr{x}$,
  see Section \ref{circ-model}.)  From \cite[Theorem 2.6]{CGH-Duke} it follows  that on
  $(M,\bc)$ for a $3$-tractor $\mathbb{K}$ that is parallel and of the same algebraic type
  (i.e. simple and of signature
  $(+,+,-)$
  or $(-,-,+)$) the zero locus of
  $X\wedge \mathbb{K}$ is either empty or is locally, and hence
  globally, an embedded curve. If the latter then it must be a conformal circle by Theorem \ref{main-c}.

  Thus, in particular, the zero locus of $X\wedge L(k)$ is either empty or  a conformal circle.
\end{proof}

By essentially the same argument we get the corresponding result for
null geodesics in indefinite conformal manifolds  as follows.

\begin{prop}\label{nullzero}
  On a connected indefinite conformal manifold $(M,\bc)$ let $k_{b}$ be
  a normal solution of the conformal Killing equation
  \eqref{cKt}, i.e.\ $\nabla_{(a}k_{b)_0}=0$, such that the parallel tractor
  $L(k)\in \Gamma(\Lambda^2\cT)$ is simple and totally null as in
  \eqref{nil-eq}. Then
  the locus of  points where
  $$
    (k_{b},\nabla_{[a}k_{b]}) \qquad \mbox{for any $g \in \bc$ with Levi-Civita $\nabla$,}
  $$
  both vanish is either empty or a null geodesic.
\end{prop}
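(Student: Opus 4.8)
The plan is to run the proof of Proposition \ref{cc-zero} almost verbatim, with the $3$-tractor there replaced by the $2$-tractor $L(k)\in\Gamma(\Lambda^2\cT)$ and the conformal-circle stratum of the model replaced by the null-geodesic stratum of Section \ref{nmodel}. Since $k_b$ is assumed a \emph{normal} solution of the conformal Killing equation \eqref{cKt}, the first step is to record that its splitting image $L(k)$ is parallel for the normal conformal tractor connection; by hypothesis it is in addition simple and totally null in the sense of \eqref{nil-eq}.

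The second step is to identify the zero locus in the statement with the zero locus of $X\wedge L(k)$. Working in a scale $g\in\bc$, the composition series of $\Lambda^2\cT$ places the projecting part $k_b$ in the $Y\wedge Z$ slot and the skew derivative $\nabla_{[a}k_{b]}$ in the $Z\wedge Z$ part of the middle slot, while the divergence and the remaining data sit in the $Y\wedge X$ and $X\wedge Z$ slots, which carry an explicit factor of $X^A$. Wedging $L(k)$ with $X$ therefore annihilates exactly the $X$-bearing slots, so that $X_{\mr{x}}\wedge L(k)_{\mr{x}}=0$ if and only if both $k_b$ and $\nabla_{[a}k_{b]}$ vanish at $\mr{x}$; note that the divergence plays no role, consistent with only the data $(k_b,\nabla_{[a}k_{b]})$ appearing in the statement.

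The third and decisive step is the curved orbit decomposition of \cite[Theorem 2.6]{CGH-Duke}. On the model $\mc{Q}$ of Section \ref{nmodel}, a parallel simple totally null $2$-tractor $\mathbb{K}$ has $X\wedge\mathbb{K}=0$ exactly at those points of $\mc{Q}$ whose homogeneous coordinates lie in the totally null $2$-plane spanned by $\mathbb{K}$; this locus is either empty or the unique null geodesic with characterising tractor $\Sigma:=\mathbb{K}$. Applying the comparison map of \cite{CGH-Duke} to the parallel tractor $L(k)$, which by hypothesis has the same algebraic type, shows that on $(M,\bc)$ the zero locus of $X\wedge L(k)$ is locally, hence globally, either empty or an embedded curve. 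In the latter case $L(k)$ restricts along it to a parallel, simple, totally null $2$-tractor annihilated by $X\wedge(\,\cdot\,)$, so Theorem \ref{main-nullc} forces the curve to be a null geodesic.

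The main obstacle is the bookkeeping of the second step: one must confirm from the explicit form of the conformal Killing splitting operator $L$ that its only $X$-free slots are controlled by $k_b$ and its curl $\nabla_{[a}k_{b]}$, so that $X\wedge L(k)=0$ is genuinely equivalent to the stated vanishing condition. Everything else transfers unchanged from Proposition \ref{cc-zero}, with the simplicity and total-nullity hypotheses ensuring that the relevant model stratum is a null geodesic rather than a higher-dimensional orbit.
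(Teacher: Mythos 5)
Your proposal is correct and follows essentially the same route as the paper: identify the vanishing of $(k_b,\nabla_{[a}k_{b]})$ with the vanishing of $X\wedge L(k)$ via the explicit form of the splitting operator, then transfer the model's null-geodesic stratum to the curved setting using the curved orbit theorem of \cite{CGH-Duke} exactly as in Proposition \ref{cc-zero}, and finish with Theorem \ref{main-nullc}. The only difference is that you spell out the slot bookkeeping for $\Lambda^2\cT$ that the paper delegates to the cited formulae for $L(k)$, and your accounting of which slots survive wedging with $X$ is accurate.
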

\begin{proof}
  For a conformal Killing vector field $k$ the image $L(k)$ of the BGG
  splitting operator is a section of $\Lambda^2\cT$ and is given explicitly in
  e.g. \cite{GLaplacianEinstein,G-Sil-ckforms,HSSS}. From any of these  it is seen that
  the vanishing of $ X \wedge L(k) =0$ at some point $\mr{x}$
  is equivalent to $ (k_{b},\nabla_{[a}k_{b]})(\mr{x})=0$, where
  $\nabla$ is the Levi-Civita for any $g\in \bc$.  The argument
  otherwise proceeds  as the proof of Proposition
  \ref{cc-zero} above,  mutatis mutandis.
\end{proof}

\subsection{Projective geodesics and weighted bivectors}\label{proj-zero-sec}

Now we work on a projective manifold $(M,\bp)$.
In this case the relevant first BGG equation is
\begin{align}\label{eq-proj-wt-bv}
  \nabla_a \sigma^{bc} - 2 \delta^{[b}_a \tau^{c]} & = 0 \, ,
\end{align}
where  $\sigma^{ab}\in \Gamma(\mc{E}^{[ab]}(-2))$
and hence  $\tau^a := \frac{1}{n-1} \nabla_b \sigma^{ba}$. The prolonged system for this equation is given by \eqref{eq-proj-wt-bv} together with the equation
\begin{align*}
  \nabla_a \tau^b + \Rho_{ac} \sigma^{bc} + \frac{1}{2(n-2)} \sigma^{cd} W_{cd}{}^b{}_a & = 0 \, ,
\end{align*}
which must hold for any solution. The BGG splitting operator can then be computed to be
\begin{equation}\label{L-si}
  \Gamma(\mc{E}^{[ab]}(-2))\ni \si^{ab}\mapsto L(\si)= W^\alpha{}_a W^\beta{}_b \sigma^{ab} + \frac{2}{n-1}X^{[\alpha} W^{\beta]}{}_a \nabla_b \sigma^{ba} \in \Gamma(\Lambda^2\cT),
\end{equation}
where $\cT$ is the standard projective tractor bundle.

Thus we have the following:
\begin{prop} \label{f-zero}
  Let $\sigma^{ab} \in \Gamma( \mc{E}^{[ab]}(-2) )$ be a normal solution of
  \eqref{eq-proj-wt-bv}  such that the corresponding parallel
  $2$-tractor $\Sigma^{\alpha\beta}=L(\si)$ is simple. Then the zero set
  of $\sigma^{ab}$ is either empty or is an unparametrised  geodesic.
\end{prop}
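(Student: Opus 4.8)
The plan is to follow the argument of Proposition \ref{cc-zero}, transporting it to the projective setting and invoking the curved orbit theorem of \cite{CGH-Duke}. Since $\si^{ab}$ is a normal solution of \eqref{eq-proj-wt-bv}, the $2$-tractor $\Sigma^{\alpha\beta} = L(\si)$ is parallel for the normal projective tractor connection, and by hypothesis it is simple. The first task is to identify the zero set of $\si^{ab}$ with the locus where the incidence relation $X \wedge \Sigma$ vanishes.

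To establish this, I would use the explicit formula \eqref{L-si}. Fixing a scale (equivalently a $\na \in \bp$), the second term of $L(\si)$ already carries a factor of $X^{[\alpha}$, so upon forming $X^{[\gamma} L(\si)^{\alpha\beta]}$ it is annihilated by the skew-symmetrisation; hence
\[
X^{[\gamma}\Sigma^{\alpha\beta]} = X^{[\gamma} W^\alpha{}_a W^{\beta]}{}_b \si^{ab}.
\]
Because $\{X^\alpha, W^\alpha{}_a\}$ is a frame for $\cT$ at each point, the $3$-tractors $X \wedge W_a \wedge W_b$ are linearly independent, so this expression vanishes at a point $\mr{x}$ if and only if $\si^{ab}(\mr{x}) = 0$. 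Thus the zero set of $\si^{ab}$ coincides with the locus where $X_{\mr{x}} \wedge \Sigma_{\mr{x}} = 0$. I would note that, since $\Sigma$ is parallel, the simplicity condition $\Sigma^{[\alpha\beta}\Sigma^{\gamma]\delta}=0$ holds everywhere once it holds at one point.

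Next I would analyse the model $S^n = \mathbb{P}_+(\R^{n+1})$. There a parallel simple $2$-tractor $\Sigma$ is a fixed $2$-plane through the origin of $\R^{n+1}$, and the locus where $X \wedge \Sigma = 0$ is exactly the set of points lying in that plane, i.e.\ a great circle — an unparametrised geodesic (cf.\ Section \ref{model}). The curved orbit theorem \cite[Theorem 2.6]{CGH-Duke}, applied to the parallel tractor $\Sigma$ of this fixed algebraic type (simple of rank $2$), then shows that on the general projective manifold $(M,\bp)$ the zero locus of $X \wedge \Sigma$ is either empty or, locally and hence globally, an embedded curve of the same dimension as in the model. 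In the latter case Theorem \ref{main-p} forces this curve to be an unparametrised geodesic. Combining with the identification of the previous paragraph, the zero set of $\si^{ab}$ is either empty or an unparametrised geodesic.

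I expect the main subtlety to lie in the correct invocation of the curved orbit theorem: one must verify that ``simple $2$-tractor'' is a $P$-invariant algebraic condition, so that the comparison map of \cite{CGH-Duke} carries the model stratum (a great circle) to a stratum of the same type and dimension in the curved setting. By contrast, the algebra identifying $\{\si^{ab}=0\}$ with $\{X \wedge \Sigma = 0\}$ is routine once the formula \eqref{L-si} and the splitting identities \eqref{trids} are in hand; in particular, and unlike the conformal cases of Propositions \ref{cc-zero} and \ref{nullzero}, only the projecting part $\si^{ab}$ (and not the prolonged variable $\tau^a$) enters, precisely because the $\tau^a$ contribution to $\Sigma$ lies in the image of $X$ and is therefore automatically killed on wedging with $X$.
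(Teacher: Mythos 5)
Your proposal is correct and follows exactly the route the paper takes: it identifies the zero set of $\sigma^{ab}$ with the locus $X\wedge\Sigma=0$ via the formula \eqref{L-si}, and then adapts the curved-orbit-theorem argument of Proposition \ref{cc-zero}, concluding with Theorem \ref{main-p}. The paper's own proof is just a terse version of this, so you have simply supplied the details it leaves implicit (in particular the observation that the $\tau^a$-term is killed on wedging with $X$, which is the point behind the paper's first sentence).
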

\begin{proof}
  From \eqref{L-si} the vanishing of $X\wedge L(\si)$ at some point $\mr{x}$ is the same as $\si(\mr{x})=0$.  Otherwise the argument
  again proceeds via an obvious adaption of the proof of Proposition
  \ref{cc-zero} above.
\end{proof}

\bibliography{GST}

\Addresses

\end{document}